\documentclass[11pt]{article}
\usepackage{geometry}
\usepackage{graphicx}
\geometry{letterpaper}
\usepackage{amsmath}
\usepackage{amssymb}
\usepackage{amsthm}
\usepackage{amsfonts}
\usepackage{subfig}
\usepackage{bbm}
\usepackage{algorithm}
\usepackage{algorithmic}
\usepackage{epsfig}
\usepackage{color}
\usepackage{enumitem}
\usepackage{setspace}
\usepackage{sidecap}
\usepackage{subfig}

\numberwithin{equation}{section}

\newtheorem{theorem}{Theorem}[section]
\newtheorem{proposition}[theorem]{Proposition}

\newtheorem{assumption}[theorem]{Assumption}

\newcommand{\bthm}{\begin{theorem}}
\newcommand{\ethm}{\end{theorem}}

\newcommand{\bprop}{\begin{proposition}}
\newcommand{\eprop}{\end{proposition}}

\captionsetup[subfigure]{labelformat=empty}

\def\XXint#1#2#3{{\setbox0=\hbox{$#1{#2#3}{\int}$}
     \vcenter{\hbox{$#2#3$}}\kern-.5\wd0}}

\newcommand{\du}{\, \mathrm{d}}

\newtheorem{remark}{Remark}

%%%%%%%%%%%%%%%%%%%%%%%%%%%%%%%%%%%%%%%%%%%%%%%%%%%%%%%%%%%%%%%%%%%%%%%%%%%%%%
%%%%% Margins etc %%%%%%%%%%%%%%%%%%%%%%%%%%%%%%%%%%%%%%%%%%%%%%%%%%%%%%%%%%%%
%%%%%%%%%%%%%%%%%%%%%%%%%%%%%%%%%%%%%%%%%%%%%%%%%%%%%%%%%%%%%%%%%%%%%%%%%%%%%%
\textwidth6.2in

\addtolength{\textheight}{0.6in}

\setlength{\topmargin}{-0.2in} 

\setlength{\footskip}{30pt}

\setlength{\oddsidemargin}{0in}

\begin{document}

% --------------------------------------------------------------------------- 80
\title{Approximate global minimizers to pairwise interaction problems via convex relaxation}

% --------------------------------------------------------------------------- 80
\author{Mahdi Bandegi
\thanks{Dept. of Mathematical Sciences, NJIT, Newark, NJ, USA, mb495@njit.edu} 
\hspace{1.5cm} %\,\,\,\,\,\,\, 
David Shirokoff
\thanks{Corresponding author. \newline
Dept. of Mathematical Sciences, NJIT, Newark, NJ, USA, david.g.shirokoff@njit.edu}}

% --------------------------------------------------------------------------- 80
    
\maketitle

\begin{abstract}
We present a new approach for computing approximate global 
minimizers to a large class of non-local pairwise interaction problems defined over 
probability distributions.  The approach predicts candidate global 
minimizers, with a recovery guarantee, that are sometimes exact, and often 
within a few percent of the optimum energy (under appropriate normalization 
of the energy).  The procedure relies on a convex relaxation of the pairwise 
energy that exploits translational symmetry, followed by a recovery procedure 
that minimizes a relative entropy.  Numerical discretizations of the 
convex relaxation yield a linear programming problem over convex cones that 
can be solved using well-known methods.  
One advantage of the approach is that it provides sufficient conditions
for global minimizers to a non-convex quadratic variational problem, in the 
form of a linear, convex, optimization problem for the auto-correlation of the 
probability density.
We demonstrate the 
approach in a periodic domain for 
examples arising from models in materials, social phenomena and
flocking.
The approach also exactly recovers 
the global minimizer when a lattice of Dirac masses solves the convex 
relaxation. 
An important by-product of the relaxation is a decomposition of 
the pairwise energy functional into the sum of a convex functional and 
non-convex functional.  We observe that in some cases, the non-convex
component of the decomposition can be used to characterize the support of 
the recovered minimizers.

\end{abstract}

% --------------------------------------------------------------------------- 80
\medskip

\noindent
{\bf Keywords:} Global minimizers, non-convex energy, pairwise interactions, 
convex relaxations, conic programming, semi-definite programming, flocking,
self-assembly.\\

\noindent
{\bf AMS Subject Classifications:}
 49M30, 49S05.

% --------------------------------------------------------------------------- 80

\section{Introduction}

In this paper, we present a new approach for computing candidate global 
minimizers to a class of energy functionals that arise as continuum 
approximations to a large collection of interacting particles.  
Although there are many models for how a collection of particles 
may interact, we consider here functionals corresponding to systems 
where particles interact only in pairs with each other. 
The resulting pairwise energy functionals are in 
general non-convex and quadratic, and may have multiple local minimizers, making the 
global optimization a potentially difficult problem.  
One could in principle numerically discretize the quadratic functional we 
consider, using
$n \gg 1$ spatial grid points, and arrive at a finite dimensional, quadratic optimization problem.
Unfortunately, minimizing such a discrete problem 
through currently known methods, see Sections 4--5 in \cite{HiriartUrrutySeeger2010} 
(and references within), is computationally prohibitive, and requires 
$\mathcal{O}(2^n)$ floating point operations.  
We stress that these computational costs are a currently known upper bound,
and future algorithms may improve upon them.

The idea in the new approach is to avoid minimizing the non-convex quadratic
functional, and instead minimize a linear convex functional that bounds the 
non-convex functional from below.  The solution to this lower bound 
problem then results in a new sufficient condition for global minimizers.
The advantage of this approach is that 
numerical discretizations of the linear, lower bound problem,
using $n$ spatial grid points,  require $\mathcal{O}(n)$ linear
constraints, and hence may be solved using $\mathcal{O}(n)$ 
floating point operators.  We then obtain candidate 
minimizers, not by minimizing the 
original energy, but rather choosing ones that try to satisfy this new 
sufficient condition.  If a candidate satisfies the sufficient condition
exactly, then we are guaranteed that it is a global minimum.  If 
a candidate minimizer does not satisfy the condition exactly, then
by virtue of the fact that the sufficient condition
provides a lower bound to the energy functional, we can
quantify a worst case estimate on the energy difference
between the candidate and global minimizer.

Although parts of the approach are numerical in nature, a
by-product of the analytic formulation is an optimal 
decomposition of the energy functional into the sum of a 
non-negative, non-convex functional, and a convex functional. 
The resulting convex functional in the decomposition is 
then highly reminiscent of a convex envelope. 
This decomposition will also help to explain the 
emergence of new length scales that characterize the patterns
of many interacting particles.

% --------------------------------------------------------------------------- 80
Pair interaction problems are ubiquitous throughout the sciences, appearing 
in problems ranging from electromagnetics, the weak interaction of nuclear 
matter \cite{SchneiderHorowitzHughtoBerry2013}, biological 
swarming \cite{BernoffTopaz2013, CraigTopaloglu2015, 
MogilnerKeshetBentSpiros2003, NouralishahiWuVandenberghe2008, 
BrechtUminskyKolokolnikovBertozzi2012, WaldspurgerAspremontMallat2015, 
WangMillerLizierProkopenkoRossi2012}, colloids, polymers 
\cite{Cotter1977, Onsager1949}, consensus \cite{MotschTadmor2014}, 
mathematical physics \cite{CarrilloSlepcevWu2016,LuOtto2015} and 
self-assembly \cite{HolmesGortlerBrenner2013, MarcotteStillingerTorquato2011, 
RechtsmanStillingerTorquato2005} to name a few.  In these systems, each 
particle exhibits and experiences a force from every other particle 
in the system. The resulting sum of the pairwise energies then promote the 
collective organization of matter into the formation of structures such as 
solids or crystalline lattices 
\cite{CohnKumarSchurmann2010, Suto2005, Suto2006, Suto2011}. 

% --------------------------------------------------------------------------- 80

Global minimizers or ground states for many particle systems play a key 
physical role as they often describe the most likely observed state at low 
temperatures, influence the structure of matter at high temperatures, 
and are also important for computing phase diagrams 
\cite{EmelianenkoLiuDu2006, HarveyErikssonOrbanChartrand2013}.  Dynamically, 
global minimizers appear as steady states to gradient flows, or as critical 
points to Hamiltonian systems, and therefore may play a role in characterizing
the long time behavior in some dynamical systems.

% --------------------------------------------------------------------------- 80

We consider problems motivated by a large number, $N \gg 1$,
of interacting particles, where a probability measure 
$\rho(\mathbf{x})\du\mathbf{x}$ is used to represent the distribution of 
particles. Here $\mathbf{x} \in \mathbbm{R}^d$ denotes the 
spatial coordinates in a dimension $d \geq 1$. For problems on a domain 
$\Omega \subseteq \mathbbm{R}^d$, we consider energy functionals that take the form
\begin{align} \label{Energy}
	\mathcal{E}(\rho) := 	\frac{1}{2} \int_{\Omega} \int_{\Omega}   
	\rho(\mathbf{x}) \rho(\mathbf{y}) W(\mathbf{x} - \mathbf{y}) \du \mathbf{x} \du \mathbf{y}.
\end{align}

In equation (\ref{Energy}), $\rho(\mathbf{x}) \du\mathbf{x}$ 
(resp.~$\rho(\mathbf{y}) \du\mathbf{y}$) is
interpreted as the fraction of particles in the vicinity of a point 
$\mathbf{x}$ (resp.~$\mathbf{y}$) in the domain $\Omega$. Hence, 
the energy (\ref{Energy}) is the double integral over all possible pairs 
of particles at locations $\mathbf{x}$ and $\mathbf{y}$, weighted by the 
\emph{interaction potential} $W(\mathbf{x}-\mathbf{y})$.  Physically, $W(\mathbf{r})$
typically represents the energy cost of having two particles separated by 
the vector $\mathbf{r}$.  
Due to the double integral in (\ref{Energy}) over all possible pairs of 
locations $\mathbf{x}$ and $\mathbf{y}$, we refer to 
$\mathcal{E}(\rho)$ as the \emph{pairwise energy}.

Formally $\rho(\mathbf{x}) \du\mathbf{x}$ will be taken as a probability measure, 
however for brevity we will suppress the $\du\mathbf{x}$ throughout the 
written text and write $\rho(\mathbf{x})$ with the understanding that 
$\rho(\mathbf{x})$ is a measure and includes $L^1(\Omega)$ probability densities
and non-classical functions such as a Dirac mass.  Without a loss 
of generality, the total mass $m$ of $\rho(\mathbf{x})$ is taken to be $1$:
\begin{align}\label{Mass}
	m := \int_{\Omega}\rho(\mathbf{x}) \du \mathbf{x} = 1.
\end{align}

% --------------------------------------------------------------------------- 80
If, $\rho(\mathbf{x})$ was normalized to $m \neq 1$ in equation (\ref{Mass}), 
i.e., as the total number of particles in the system $m = N$,  
then a re-scaling of $\tilde{\rho}(\mathbf{x}) = m^{-1}\rho(\mathbf{x})$ 
re-scales $\mathcal{E}(\rho) = m^2 \mathcal{E}(\tilde{\rho})$ 
by $m^{2}$. As a result, minimizing 
$\mathcal{E}(\tilde{\rho})$ over 
$\tilde{\rho}(\mathbf{x})$ with mass $1$ is equivalent to minimizing 
a re-scaled $\mathcal{E}(\rho)$ over $\rho(\mathbf{x})$ with mass $m$.  In general, the 
assumption of (\ref{Mass}), as opposed to a different value of $m$, does 
not alter the approach in this paper. 

\begin{remark}\label{SymmetricW}
	For the purposes of minimizing the energy (\ref{Energy}) on 
	$\Omega = \mathbbm{R}^d$, the interaction potential $W(\mathbf{x})$ 
	may be assumed to be mirror symmetric, i.e., even with respect 
	to the simultaneous negation of the coordinates, 
	for all $\mathbf{x} \in \mathbbm{R}^d$,
	$W(-\mathbf{x}) = W(\mathbf{x})$ where 
	$W(-\mathbf{x}) := W(-x_1, \ldots, -x_d)$. 
	If, for instance, $W(\mathbf{x})$ is not mirror symmetric, one may write	
	$W(\mathbf{x} ) = W_E(\mathbf{x}) + W_O(\mathbf{x})$ where
	$W_E(\mathbf{x})$ and $W_O(\mathbf{x})$ are the following even and odd components of 	$W(\mathbf{x})$:
	\begin{align}\nonumber
		W_E(\mathbf{x}) := \frac{1}{2}\Big( W(\mathbf{x} )  + W(-\mathbf{x} ) \Big), \quad
		W_O(\mathbf{x}) := \frac{1}{2}\Big( W(\mathbf{x} )  - W(-\mathbf{x} ) \Big). 
	\end{align}
	The function $W_O(\mathbf{x})$, when inserted into the energy integral 
	(\ref{Energy}), then integrates to zero by a change of variables:
	\begin{align} 
		\int_{\mathbb{R}^d} \int_{\mathbb{R}^d}   
		\rho(\mathbf{y}) W_{O}(\mathbf{x} - \mathbf{y}) \rho(\mathbf{x}) \du \mathbf{x} \du \mathbf{y} 
		= \frac{1}{2}\int_{\mathbb{R}^d} \int_{\mathbb{R}^d}   
		\rho(\mathbf{y})\Big( W(\mathbf{x} - \mathbf{y}) - W(\mathbf{y} - \mathbf{x})\Big) \rho(\mathbf{x}) \du \mathbf{x} \du \mathbf{y} 		
		= 0.
	\end{align}
	Hence, the energy $\mathcal{E}(\rho)$ in (\ref{Energy}) is the same 
	for all $\rho(\mathbf{x})$ regardless of whether $W(\mathbf{x})$ 
	or $W_E(\mathbf{x})$ is used.
	Therefore, one may assume that $W(\mathbf{x}) = W_E(\mathbf{x})$ is 
	the symmetric component of $W(\mathbf{x})$, even when $W(\mathbf{x})$
	is not mirror symmetric. Note that mirror symmetry does not constrain 
	$W(\mathbf{x})$ to be even symmetric in each individual component,
	i.e., in general one could have $W(x_1, -x_2) \neq W(x_1, x_2)$ 
	and still satisfy $W(-x_1, -x_2) = W(x_1, x_2)$. 
	The same results regarding mirror symmetry hold for the periodic 
	domain $\Omega = [0,1]^d$.
\end{remark}

The approach in this paper will focus on energies of the form (\ref{Energy}),
however we now briefly discuss how the energy $\mathcal{E}(\rho)$, which is defined for 
probability measures $\rho(\mathbf{x})$, can be related 
to the energy of a discrete particle system.  For example, 
restricting $\rho(\mathbf{x})$ in the energy $\mathcal{E}(\rho)$ to a sum
of $N$ Dirac masses can be interpreted as the energy of an $N$ particle 
system.  Specifically, substituting an ansatz of Dirac masses 
into the energy (\ref{Energy}) yields: 
\begin{align}
	\mathcal{E}_N(\mathbf{x}_1, \mathbf{x}_2, \ldots, \mathbf{x}_N) 
	:= \mathcal{E}(\rho_N), \hspace{5mm} \textrm{where} \hspace{5mm}
	\rho_N(\mathbf{x}) = \frac{1}{N} \sum_{j = 1}^{N} \delta(\mathbf{x} - \mathbf{x}_j).
\end{align}
By direct calculation, and assuming that $W(\mathbf{x})$ is continuous
so that the integration against Dirac masses is well defined, one has
\begin{align} \label{DiscretePairwiseEnergy}
	\mathcal{E}_N(\mathbf{x}_1, \mathbf{x}_2, \ldots, \mathbf{x}_N) 
	= \frac{1}{2 N^2}\sum^N_{i=1}
	\sum_{j = 1}^N W(\mathbf{x}_i - \mathbf{x}_j).
\end{align}
Within the double-sum (\ref{DiscretePairwiseEnergy}) are $N$ terms where $i = j$
that result in a total contribution of $(2N)^{-1} W(\mathbf{0})$ to the overall energy
$\mathcal{E}_N$.  Provided $W(\mathbf{0}) < \infty$ is bounded\footnote{
Many interaction potentials are not bounded at $\mathbf{x} = \mathbf{0}$, 
see for instance the divergent power law potentials in 
\cite{ChoksiFetecauTopaloglu2013, SimioneSlepcevTopaloglu2015}.} at the origin, 
$\mathcal{E}_N$ can be identified as the energy of $N$ discrete 
interacting particles--interacting with the same interaction potential $W(\mathbf{x})$
as in (\ref{Energy}).  The calculation also shows that minimizing 
$\mathcal{E}(\rho)$ over probability measures $\rho(\mathbf{x})$ includes the 
energies $\mathcal{E}_N$ of all possible arrangements of $N$ particles, for any $N\geq 1$.

% --------------------------------------------------------------------------- 80

\begin{remark} (Numerical example: a particle gradient flow for a periodic 
Morse potential)
Arrangements of particles that minimize 
the collective energy $\mathcal{E}_N$ may form patterns on length scales 
that are not readily identifiable from the interaction energy $W(\mathbf{x})$. 
Figure \ref{DiscreteParticles} shows the time evolution for a collection 
of randomly distributed particles undergoing a one dimensional gradient flow
governed by the system of ordinary differential equations:
\begin{align}\label{DiscreteGradFlow}
 \frac{dx_j}{dt} = -\nabla_{x_j} \mathcal{E}_N, \quad 1 \leq j \leq N. 
\end{align}
Here the periodic Morse potential (\ref{PeriodicMorse}) (with parameters 
$\sigma = 0.1$, $(L, G) = (1.2, 0.9)$) was used for $\mathcal{E}_N$, 
while the initial particle 
positions, i.e., $x_j$ at $t = 0$ for $1 \leq j \leq N$, was
taken to be randomly distributed in the domain $[0,1]$, sampled from a uniform 
probability distribution. A total of $N = 400$ particles was used in the 
simulation, however the same histogram shape in Figure \ref{DiscreteParticles} was
observed in repeated trials, for different values of $N = 200$ and $600$, and also
for (slightly perturbed) uniformly distributed initial data.  
Figure \ref{DiscreteParticles} also shows the histogram of particle positions as 
$t \rightarrow \infty$, demonstrating that the particles coalesce into a region with a 
width of $\sim 0.159$ units.  
\begin{figure}[htb!] 
	\centering
	\subfloat[(a) Gradient flow]{\includegraphics[width = 0.36\textwidth]
	{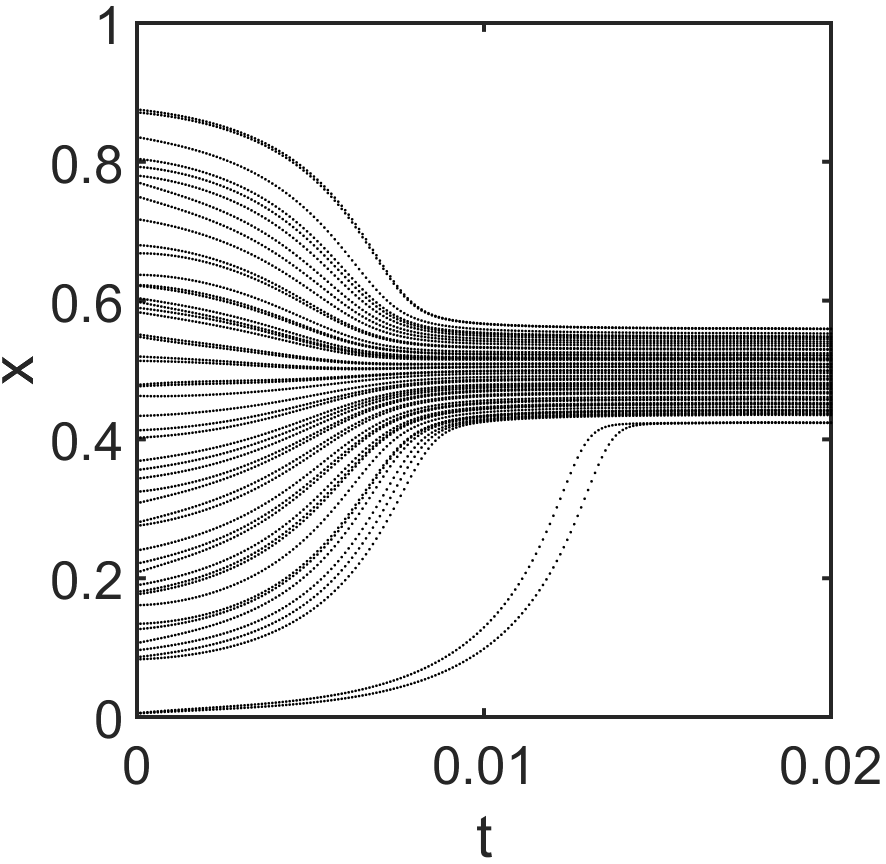}}
    \subfloat[(b) Particle density]{\includegraphics[width = 0.33\textwidth]
    {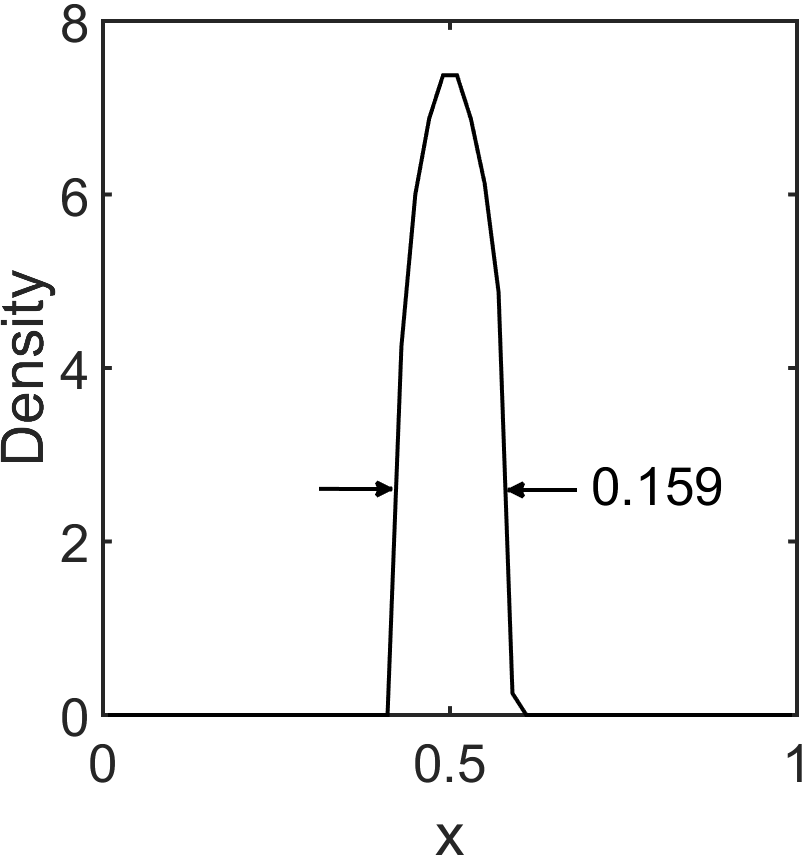} }
    \caption{(a) Time evolution of equation (\ref{DiscreteGradFlow}) for
    the interaction potential (\ref{PeriodicMorse}) and $N = 400$ discrete particles 
    (only 50 shown), towards a critical point of $\mathcal{E}_N$.  (b) 
    The density distribution (using 50 bins) of the discrete particles at 
    steady state  (i.e., as $t \rightarrow \infty$) from part (a).  
    The density is normalized to have area one.  
    The support of the density has an approximate 
    width of $\sim 0.159$ (computed as the difference between the maximum and 
    minimum particle locations) 
    which is not immediately related to the interaction 
    potential $W(x)$. The width, however, will emerge as the length scale in 
    the optimal dual decomposition for the energy presented in Section 
    \ref{sec_dualdecomposition} (see also Figure \ref{ResultsPeriodicMorse}a).}
 \label{DiscreteParticles}
 \end{figure}
\end{remark}

% --------------------------------------------------------------------------- 80
Recently, computational methods based on convex relaxations or lower bounds 
have been used to estimate low energy states and phase diagrams in material 
science.  For instance, \cite{ShirokoffChoksiNave2014} computed convex lower 
bounds to estimate the order-disorder phase transition in energy functionals 
containing double-wells.  Meanwhile, \cite{LiuLaiOsher2015} used relaxations 
to compute approximate density matrices for quantum systems at zero 
temperature, while 
\cite{LocatelliSchoen2003, LocatelliSchoen2013, MaranasFloudas1994} 
have computed molecular structures.

% --------------------------------------------------------------------------- 80

The approach we present for computing approximate global minimizers is 
similar in spirit to other state of the art algorithms currently used in 
optimization theory and integer programming that exploit matrix semi-definite 
programming (SDP) (see also 
\cite{BoydVandenberghe2004, Floudas1999, NesterovWolkowiczYe2000} for a 
discussion on SDPs and relaxations).  For example, semi-definite based convex 
relaxations represent some of the best known polynomial time algorithms for 
computing approximate solutions to the graph partitioning problem 
\cite{GoemansWilliamson1995} and matrix completion problem 
(\emph{Netflix prize}) \cite{CandesTao2010}. They have also been used in data 
science to approximately solve the k-disjoint clique problem 
\cite{AmesVavasis2014} and blind deconvolution \cite{AhmedRechtRomberg2013}, 
while other relaxations have been used to characterize the sparsest element 
in a discrete set \cite{DemanetHand2014}.

% --------------------------------------------------------------------------- 80

Our paper is presented as follows: Section 2 introduces the general problem 
and definition of the recovery guarantee. In Section 3 we formulate the 
convex relaxation, while in Section 4 we outline the recovery procedure. 
Section 5 contains a detailed description of the dual problem and resulting 
optimal decomposition of the pairwise energy. Sections 
7 and 8 present numerical examples in dimensions 
one and two respectively. Finally, Appendix A contains information 
on known cases where the convex relaxation is exact, Appendix B contains 
numerical details, while 
Appendix C characterizes solutions to the relaxed problem
that take the form of three Dirac masses.

\bigskip

% --------------------------------------------------------------------------- 80
\section{Problem formulation and preliminaries}\label{Sec:preliminaries}

Consider a periodic domain $\Omega = [0,1]^d$ in dimension $d$ 
(eventually taken to be $d = 1, 2$ in 
Sections \ref{sec:NumericalResults_d1}--\ref{sec:NumericalResults_d2}),
and interaction energy $W(\mathbf{x})$. We are interested in the problem of 
finding global minimizers to the pairwise energy (\ref{Energy}):
\begin{align} \nonumber
 	 (P) \quad \quad &\textrm{Minimize } \;\; 
 	 \frac{1}{2} \int_{\Omega} \int_{\Omega} 
 	 \rho(\mathbf{x}) \rho(\mathbf{y}) W(\mathbf{x} - \mathbf{y})  
 	 \du \mathbf{x} \du \mathbf{y}, 
\end{align}
	\[		
	\textrm{over probability measures $\rho(\mathbf{x}) \in \mathcal{C}_1$ with } 
	\int_{\Omega} \rho(\mathbf{x}) \du \mathbf{x} = 1.
	\]
	Here we have introduced $\mathcal{C}_1$ as a convex cone to characterize 
	non-negative measures\footnote{In the definition of $\mathcal{C}_1$, 
	$C^0(\Omega)$ is the space of periodic continuous functions on $\Omega$ endowed with 
	the sup norm.  Since $\Omega = [0,1]^d$ is compact, the functions 
	$u \in C^0(\Omega)$ are bounded and also form a Banach space.  
	The notation here, $\langle f, u\rangle$, 
	represents the pairing of elements $f$ in the dual space
	$C^{0}(\Omega)'$ with continuous functions $u$ that are elements of
	$C^0(\Omega)$ (See Chapter 5 in \cite{Folland1999} for a general 
	discussion on Banach spaces).  In addition, the Riesz representation
	theorem for bounded continuous functions (see Chapter 7 in \cite{Folland1999},
	or Chapter 2 in \cite{Rudin87})
	shows that elements in 
	$C^{0}(\Omega)'$ can be identified as non-negative \emph{Borel measures}, 
	which justifies the integral representation of $\langle f, u\rangle$.}:
	\[ 
	\mathcal{C}_1 := \Big\{ f \in C^{0}(\Omega)' : \langle f, u \rangle \geq 0 
	\textrm{ for all } u \in C^{0}(\Omega) \textrm{ with } u(\mathbf{x}) \geq 0 \Big\}, 
	\]
	where 
	\[
	\langle f, u \rangle = \int_{\Omega} u(\mathbf{x}) f(\mathbf{x}) \du \mathbf{x}
	\]
	is the integral of the continuous function $u(\mathbf{x})$ against the measure
	$f(\mathbf{x}) \du \mathbf{x}$. 	
	In the case when $\rho(\mathbf{x})$ is a classical function, we may equivalently replace 
$\mathcal{C}_1$ with $\rho(\mathbf{x}) \geq 0$ for all $\mathbf{x} \in \Omega$. 
In the problem (P), we further assume that $W(\mathbf{x})$ satisfies the 
following properties
\begin{enumerate}	
	\item[(W1)] Mirror symmetric: $W(\mathbf{x}) = W(-\mathbf{x})$, holds for all 
	$\mathbf{x}$ (See Remark \ref{SymmetricW} for justification).
	\item[(W2)] Continuous on $\Omega$. 
	\item[(W3)] Periodic with period $1$: $W(\mathbf{x} + \mathbf{k}) = W(\mathbf{x})$, 
	for all $\mathbf{x} \in \Omega$ and integer vectors $\mathbf{k} \in \mathbbm{Z}^d$.
	\item[(W4)] Normalized with mean zero: $\int_{\Omega} W(\mathbf{x}) \du \mathbf{x} = 0$. 
	In such a case, the minimum to (P) is at most zero since $\mathcal{E}(1) = 0$.
	Note that one can always add, without loss of generality, a constant to $W(\mathbf{x})$. 
\end{enumerate}

% --------------------------------------------------------------------------- 80

\begin{remark}
	For numerical simplicity we have intentionally limited the problem (P) to 
	continuous interaction potentials $W(\mathbf{x})$ on periodic domains 
	$\Omega$. Many of the results presented here apply to other 
	domains as well, including the sphere or $\mathbb{R}^d$.  
	For instance, when $\Omega = \mathbb{R}^d$, one may still define 
	a convex relaxation for problem (P). In this case, the countable
	wavenumbers (i.e. Fourier series) used to define the relaxation in 
	Section~\ref{Sec_convex_r} for the periodic domain $[0,1]^d$ 
	will be replaced 
	with a continuous set of wavenumbers (i.e. Fourier transform). 
	Additional difficulties, not encountered here, may arise in the numerical 
	solution when
	the domain is unbounded.
\end{remark}

% --------------------------------------------------------------------------- 80
For the problems we consider where $\Omega$ is compact, (P) 
admits a global minimum $\mathcal{E}_0 := \mathcal{E}(\rho_0)$, 
achieved
by some probability measure $\rho_0(\mathbf{x})$. Note that $\rho_0(\mathbf{x})$ 
is not unique since $\mathcal{E}(\rho)$ is invariant under translations so 
that $\rho_0(\mathbf{x} + \mathbf{s})$ is also a global minimum for any 
$\mathbf{s}$.  For 
non-compact domains, the existence and uniqueness 
\cite{CarrilloChipotHuang2014, ChoksiFetecauTopaloglu2013, SimioneSlepcevTopaloglu2015} 
(up to translations and rotations) of global minimizers is more subtle since 
mass can be spread arbitrarily far apart (see also
\cite{BalagueCarrilloLaurentRaoul2013a, BalagueCarrilloYao2014, CanizoCarrilloPatacchini2015} 
for results on the structure of minimizers).

% --------------------------------------------------------------------------- 80

We now review several necessary conditions imposed by the 
first and second variation of $\mathcal{E}(\rho)$ that a global minimizer $\rho_0(\mathbf{x})$ 
must satisfy (see \cite{BernoffTopaz2013} for a discussion and
\cite{CarrilloFigalliPatacchini2016} for a rigorous treatment).
Firstly, a candidate global minimizer $\rho^*(\mathbf{x})$ satisfies 
the first order necessary conditions 
if the first variation of $\mathcal{E}(\rho)$:
\begin{align}
	\Lambda(\mathbf{x}) := 
	\int_{\Omega} W(\mathbf{x} - \mathbf{y}) \rho^*(\mathbf{y}) \du \mathbf{y}, 
\end{align}
satisfies
\begin{align} \label{KKT_for_P} 
	\Lambda(\mathbf{x}) &= 2\mu , \quad \hspace{0mm} \textrm{for all } 
	\mathbf{x} \in S_* := \textrm{supp}(\rho^*). \\
	\label{ExtendedFirstOrder_condition}
	\Lambda(\mathbf{x}) &\geq 2\mu, \quad \textrm{for all } 
	\mathbf{x} \in \Omega,  \; (\textrm{Including } \mathbf{x} \notin S_*).	
\end{align}
Here $\textrm{supp}(f)$ is the support of $f(\mathbf{x})$, i.e., the set 
where $f(\mathbf{x})$ does not vanish\footnote{If $f(\mathbf{x})$ is a 
continuous function on $\Omega$, then 
$\textrm{supp}(f) = \textrm{cl}\{ \mathbf{x} : f(\mathbf{x}) \neq 0\}$,
where $\textrm{cl}$ denotes the closure. See Chapter 7 in \cite{Folland1999}
for the definition when $f(\mathbf{x})$ is a measure.}, while 
$\mu \in \mathbbm{R}$ is a Lagrange multiplier constant.
Note that multiplying (\ref{KKT_for_P}) through by 
$\rho^*(\mathbf{x})$ and integrating over $\Omega$ shows 
that $\mathcal{E}(\rho^*) = \mu$.  Hence, if $\rho^*(\mathbf{x})$
satisfies the first order condition (\ref{KKT_for_P}), then $\rho^*(\mathbf{x})$ has
energy $\mu$.  As a result, setting $\mu = \mathcal{E}_0$ in 
equation (\ref{KKT_for_P}) shows that the global minimizer 
$\rho_0(\mathbf{x})$ satisfies: 
\begin{align}\label{Sufficient_Cond}
	\int_{\Omega} W(\mathbf{x} - \mathbf{y}) \rho_0(\mathbf{y}) \du \mathbf{y} = 2\mathcal{E}_0, 
	\quad \hspace{0mm} \textrm{for all } 
	\mathbf{x} \in S_0 := \textrm{supp}(\rho_0). 
\end{align}
One difficulty with using the condition (\ref{Sufficient_Cond}) to solve 
for $\rho_0(\mathbf{x})$ is that both $\mathcal{E}_0$ and $S_0$ are not known a priori.  
As implied by the integral equation in \cite{BernoffTopaz2013} (see Remark 2.5 in \cite{CarrilloFigalliPatacchini2016} for a rigorous treatment),
consideration of the second variation of $\mathcal{E}(\rho)$ will show 
that knowledge of $S_0$ alone will be sufficient to compute $\rho_0(\mathbf{x})$ 
through a convex optimization problem. 
Specifically, a candidate $\rho^*(\mathbf{x})$ satisfies the second
order necessary conditions for a global minimum if the second variation
is non-negative (within the class of perturbations that make the first 
variation vanish):
\begin{align}
\label{SecondOrder_condition}
	\mathcal{E}(f) &\geq 0, \textrm{ for finite measures } f(\mathbf{x}),  
	\textrm{ with } \int_{\Omega} f(\mathbf{x}) \du \mathbf{x} = 0, 
	\textrm{ and } \textrm{supp}(f) \subseteq S_*. 
\end{align}
Here the class of $f(\mathbf{x})$'s in (\ref{SecondOrder_condition}) are 
exactly the measures that when integrated against $\Lambda(\mathbf{x})$ 
vanish. Equation (\ref{SecondOrder_condition}) also implies the following
remark regarding the convexity of $\mathcal{E}(\rho)$ when restricted to 
probabilities having supports in $S_*$.   
% --------------------------------------------------------------------------- 80
%

\begin{remark} (The importance of $S_0$) \label{E_Convex_on_Z}
	Examining the necessary condition in (\ref{SecondOrder_condition})
	when $\rho_0(\mathbf{x})$ is a global minimum, one has the following
	observations:
	\begin{enumerate}
		\item[(i)] Condition (\ref{SecondOrder_condition}) implies that: 
		\[
			\mathcal{E}(\rho) \textrm{ is convex when restricted to } 
			\mathcal{B} := \Big\{ \rho(\mathbf{x}) \in \mathcal{C}_1, 
	\int_{\Omega} \rho(\mathbf{x}) \du \mathbf{x} = 1, \; \mathrm{supp}(\rho) \subseteq S_0\Big\}.		
		\]						
		\item[(ii)] Knowledge of the support of $\rho_0(\mathbf{x})$, i.e., the set $S_0$, 
		implies that (P) may be formulated as a convex optimization problem. 
	\end{enumerate}
	Note that $\mathcal{B}$ is a convex set.  To show (i), take
	any $\rho_1(\mathbf{x}), \rho_2(\mathbf{x}) \in \mathcal{B}$ and 
	set $f(\mathbf{x}) := \rho_1(\mathbf{x}) - \rho_2(\mathbf{x})$.  
	Therefore, $f(\mathbf{x})$ has support in $S_0$ and satisfies the criteria in 
	(\ref{SecondOrder_condition}).  Then,
	by direct calculation using the fact that $\mathcal{E}(\rho)$ is quadratic, one
	has for any $0 \leq \lambda \leq 1$,
	\begin{align} \label{ConvexSets}
		0 \leq  (1-\lambda)\lambda \; \mathcal{E}(f) = \lambda \mathcal{E}(\rho_1) + (1-\lambda) \mathcal{E}(\rho_2) 
	-\mathcal{E}(\lambda \rho_1 + (1-\lambda) \rho_2 ), \\ \nonumber
	\Rightarrow \mathcal{E}(\lambda \rho_1 + (1-\lambda) \rho_2 ) \leq \lambda \mathcal{E}(\rho_1) + (1-\lambda) \mathcal{E}(\rho_2). 
	\end{align}			
% --------------------------------------------------------------------------- 80	
%	
	The inequality (\ref{ConvexSets}) shows that $\mathcal{E}(\rho)$ is convex when
	restricted to probabilities in $\mathcal{B}$.  For (ii), note that 
	$\rho_0(\mathbf{x}) \in \mathcal{B}$, so that restricting the optimization of 
	$\mathcal{E}(\rho)$ in (P) to the 
	space $\mathcal{B}$ produces the same minimum $\mathcal{E}_0$.  Moreover,
	(P) then becomes the following convex problem:
	\begin{align}\nonumber
		&\min \mathcal{E}(\rho), \;\;\mathrm{ subject}\;\mathrm{to}\; 
		\rho(\mathbf{x}) \in \mathcal{B}.
	\end{align}	
\end{remark}
% --------------------------------------------------------------------------- 80
Remark \ref{E_Convex_on_Z} highlights the importance of finding sets $S_*$
where $\mathcal{E}(\rho)$ is convex. In our approach, we do not have a 
proof that the recovered candidate
minimizers satisfy the first and second order necessary conditions, however
in Section \ref{sec_dualdecomposition} we provide new sufficient conditions 
for $\mathcal{E}(\rho)$ to be convex when $\textrm{supp}(\rho) \subseteq S_*$.  Sections \ref{sec:NumericalResults_d1}--\ref{sec:NumericalResults_d2} 
 then demonstrate that our recovered minimizers often satisfy this
 new sufficient condition.

% --------------------------------------------------------------------------- 80

A common practice in optimization theory is to guarantee that a candidate 
minimizer (or maximizer) is within a factor $\alpha$ of the optimal value.  
Here we say that an approximate minimizer $\rho^*(\mathbf{x})$ to problem (P) 
has an $(\alpha, \nu)$ guarantee, where 
$0 \leq \alpha \leq 1$, $\nu \geq \mathcal{E}_0$, if the shifted energy 
$\mathcal{E}(\rho^*) - \nu$ is optimal to within a factor of $\alpha$: 
\[ 
(\mathcal{E}_0 - \nu) \leq \mathcal{E}(\rho^*) - \nu \leq \alpha (\mathcal{E}_0 - \nu ). 
\]

% --------------------------------------------------------------------------- 80
In the context of gradient flows on $\mathcal{E}(\rho)$, one may always add an 
arbitrary constant to the underlying potential $W(\mathbf{x})$, and hence 
$\mathcal{E}(\rho)$, without effecting the dynamics of $\rho(\mathbf{x})$.  
To eliminate the ambiguity of adding such an arbitrary constant, we introduce 
the shift $\nu = \mathcal{E}(\rho_{ref})$ as a reference energy with respect
 to a base probability $\rho_{ref}(\mathbf{x})$. 

% --------------------------------------------------------------------------- 80

Clearly, if $\alpha = 1$ with any $\nu$ then 
$\mathcal{E}(\rho^*) = \mathcal{E}_0$, and hence $\rho^*(\mathbf{x})$ is a global 
minimizer.  In this case, we drop the notation $\nu$ and simply say the 
solution $\rho^*(\mathbf{x})$ is optimal with a guarantee $\alpha = 1$. In the numerical 
section of this paper we always report an $\alpha$ guarantee with $\nu = 0$. 
Due to the normalization (W4), $\nu = \mathcal{E}(1) = 0$ corresponds to the 
constant state $\rho_{ref}(\mathbf{x}) = 1$.

% --------------------------------------------------------------------------- 80

In general, problem (P) is difficult to solve since the energy $\mathcal{E}(\rho)$ 
is a non-convex functional of $\rho(\mathbf{x})$.  In the next section we will 
show how to replace (P) with a convex relaxation (R) that is more amenable to analysis. 
We will:
\begin{enumerate}
	\item Formulate a convex relaxation (R) of (P).
	\item Solve the relaxation (R) using efficient linear programming (LP) algorithms.
	\item Recover a candidate minimizer from (R) using minimal points of the 
	Kullback-Leibler divergence, and report an $(\alpha, \nu)$ guarantee for the 
	candidate minimizer (with $\nu = 0$).
\end{enumerate}

% --------------------------------------------------------------------------- 80

\section{The Convex relaxation}\label{Sec_convex_r}
The purpose of this section is to formulate a convex relaxation of (P) 
that takes the form of a constrained linear optimization problem.  The linear
optimization problem may then be numerically approximated and solved using 
linear programming techniques. 

To obtain the relaxation, we first rewrite $\mathcal{E}(\rho)$ by performing a 
coordinate change of variables in the integral. Letting 
$\mathbf{s} = \mathbf{x} - \mathbf{y}$, 
\begin{align} \nonumber
	\mathcal{E}(\rho) &= \frac{1}{2} \int_{\Omega} \int_{\Omega} 
	\rho(\mathbf{x}) \rho(\mathbf{x} + \mathbf{s})  W(\mathbf{s}) 
	\du \mathbf{x} \du \mathbf{s} 
	= \frac{1}{2} \langle F, W\rangle, \\ \nonumber 
	F(\mathbf{s}) &:= \int_{\Omega} \rho(\mathbf{x}) \rho(\mathbf{x} 
	+ \mathbf{s}) \du \mathbf{x} = \rho \circ \rho.
\end{align}
Here we have introduced $F(\mathbf{x})$ as the auto-correlation of 
$\rho(\mathbf{x})$, along with a 
shorthand binary operator notation $\circ$.  In addition, we assume that
$\rho(\mathbf{x})$ is defined periodically on $\Omega$ so that 
$F(\mathbf{x})$ is also periodic on $\Omega$. 

% --------------------------------------------------------------------------- 80
The original problem (P) can then be understood as minimizing a linear 
functional $\langle F, W \rangle$, over the space of elements 
$F(\mathbf{x}) \in \mathcal{A}$ that arise as the auto-correlations of probabilities:
	\[ 
	\mathcal{A} := \Big\{ F : F(\mathbf{s}) = \int_{\Omega} 
	\rho(\mathbf{x}) \rho(\mathbf{x} + \mathbf{s}) \du \mathbf{x}, \; 
	\textrm{such that }\rho \in \mathcal{C}_1, 
	\int_{\Omega} \rho(\mathbf{x}) \du \mathbf{x} = 1 \Big\}.
	\]
	
% --------------------------------------------------------------------------- 80	
  We will show below that $\mathcal{A}$ is not a convex space 
  (See Remark~\ref{Rmk:NonconvexEx}).  Hence, we 
  have reformulated the original problem (P) of minimizing a non-convex 
  objective functional over a convex set, to the minimization of a linear, 
  convex functional over a non-convex set. Our goal is now to relax the admissible 
  space of functions $\mathcal{A}$ to a convex set. Ideally, one would like 
  to use the smallest convex relaxation, i.e., the convex hull of $\mathcal{A}$, 
  however we use a space of convex cones that may be exploited in 
  subsequent numerical computations.  Specifically, since $F(\mathbf{x})$ 
  is defined in the periodic domain $\Omega$, it is natural to consider 
  representations as a Fourier series.   
  The following proposition, which characterizes several well-known properties 
  of auto-correlations, will play an important role in defining the relaxation.
% --------------------------------------------------------------------------- 80

\begin{proposition}\label{Prop_properties_of_A}
	(Properties of $\mathcal{A}$)  
	Given any $F(\mathbf{x}) \in \mathcal{A}$, the following properties 
	hold:
	\begin{enumerate}
	\item[(A1)]	$F(\mathbf{x})$ is non-negative, i.e. for any non-negative continuous 
	function $u(\mathbf{x}) \geq 0$, $\langle F, u \rangle \geq 0$. 
	\item[(A2)] $F(\mathbf{x})$ integrates to one: $\langle F, 1 \rangle = 1$.
	\item[(A3)] $F(\mathbf{x})$ is mirror symmetric about the origin, i.e., 
	$F(-\mathbf{x}) = F(\mathbf{x})$, corresponding to zero sine modes. For every 
	$\mathbf{k} \in \mathbbm{Z}^d$, $\mathbf{k} \neq \mathbf{0}$: 
	$\langle F, \sin(2\pi \mathbf{k} \cdot \mathbf{x} ) \rangle = 0$. 
	\item[(A4)] $F(\mathbf{x})$ has non-negative cosine modes. For every 
	$\mathbf{k} \in \mathbbm{Z}^d$, $\mathbf{k} \neq \mathbf{0}$: 
	$\langle F, \cos(2\pi \mathbf{k} \cdot \mathbf{x} ) \rangle \geq 0$. 
	\end{enumerate}
	Here $\mathbbm{Z}^d$ is the set of integers defined
	by 
	\[
	\mathbbm{Z}^d = \big\{ (n_1,\ldots, n_d) : \textrm{ for integers } 
	n_j, \; 0\leq j \leq d \big\}.\]
	Note that values of $-\mathbf{k}$ in properties (A3)--(A4)
	characterize the same constraints as $\mathbf{k}$, and are therefore redundant.
	We will, however retain all $\mathbf{k} \in \mathbb{Z}^d$ to simplify subsequent
	notation. 
\end{proposition}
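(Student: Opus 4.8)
The plan is to reduce all four properties to a single computation: the Fourier coefficients of the autocorrelation factor as $\hat{F}(\mathbf{k}) = |\hat{\rho}(\mathbf{k})|^2$, where $\hat{\rho}(\mathbf{k}) := \langle \rho, e^{-2\pi i \mathbf{k}\cdot\mathbf{x}}\rangle$ denotes the $\mathbf{k}$-th Fourier coefficient of $\rho$. Since $e^{-2\pi i \mathbf{k}\cdot\mathbf{x}}$ is continuous and $\Omega$ is compact, these coefficients are well defined even when $\rho$ is a measure rather than a density. Once this identity is in hand, properties (A2)--(A4) fall out immediately because the right-hand side is real and non-negative, while (A1) I would treat separately by a direct positivity argument on the product measure $\rho \otimes \rho$.

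First I would establish the factorization. Writing $F$ through its defining pairing, inserting the test function $e^{-2\pi i \mathbf{k}\cdot\mathbf{s}}$, and applying the change of variables $\mathbf{y} = \mathbf{x} + \mathbf{s}$ (a translation, hence measure preserving on the torus $\Omega$) together with Fubini's theorem, I would obtain
\[ \hat{F}(\mathbf{k}) = \langle F, e^{-2\pi i \mathbf{k}\cdot\mathbf{s}}\rangle = \int_{\Omega}\int_{\Omega} e^{-2\pi i \mathbf{k}\cdot(\mathbf{y}-\mathbf{x})}\du\rho(\mathbf{x})\du\rho(\mathbf{y}) = \overline{\hat{\rho}(\mathbf{k})}\,\hat{\rho}(\mathbf{k}) = |\hat{\rho}(\mathbf{k})|^2 , \]
using that $\rho$ is real so $\int_\Omega e^{2\pi i \mathbf{k}\cdot\mathbf{x}}\du\rho(\mathbf{x}) = \overline{\hat{\rho}(\mathbf{k})}$. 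The only point requiring care is that $\rho$ may be a measure, so the manipulation must be carried out at the level of the pairing against continuous functions and of the product measure $\rho\otimes\rho$ on $\Omega\times\Omega$; the hypotheses needed for Fubini are immediate since every integrand is a bounded continuous function on the compact set $\Omega\times\Omega$ and $\rho\otimes\rho$ is finite. I regard this bookkeeping as the main (and only mild) obstacle, the algebra itself being routine.

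Given the factorization, the remaining steps are short. For (A2), I would set $\mathbf{k} = \mathbf{0}$ to get $\langle F, 1\rangle = |\hat{\rho}(\mathbf{0})|^2 = |\langle \rho, 1\rangle|^2 = 1$ by the mass constraint. For the trigonometric modes I would expand $\cos(2\pi\mathbf{k}\cdot\mathbf{x}) = \tfrac{1}{2}(e^{2\pi i \mathbf{k}\cdot\mathbf{x}} + e^{-2\pi i \mathbf{k}\cdot\mathbf{x}})$ and likewise for the sine, and invoke $\hat{F}(-\mathbf{k}) = |\hat{\rho}(-\mathbf{k})|^2 = |\hat{\rho}(\mathbf{k})|^2 = \hat{F}(\mathbf{k})$ (again because $\hat{\rho}(-\mathbf{k}) = \overline{\hat{\rho}(\mathbf{k})}$). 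This gives $\langle F, \sin(2\pi\mathbf{k}\cdot\mathbf{x})\rangle = \tfrac{1}{2i}\big(\hat{F}(-\mathbf{k}) - \hat{F}(\mathbf{k})\big) = 0$, which is (A3), and $\langle F, \cos(2\pi\mathbf{k}\cdot\mathbf{x})\rangle = \tfrac{1}{2}\big(\hat{F}(\mathbf{k}) + \hat{F}(-\mathbf{k})\big) = |\hat{\rho}(\mathbf{k})|^2 \ge 0$, which is (A4).

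Finally, for (A1) I would bypass Fourier analysis and argue directly: for any continuous $u \ge 0$, the same change of variables yields
\[ \langle F, u\rangle = \int_{\Omega}\int_{\Omega} u(\mathbf{y}-\mathbf{x})\,\du\rho(\mathbf{x})\,\du\rho(\mathbf{y}) \ge 0 , \]
since the integrand is non-negative and $\rho\otimes\rho$ is a non-negative measure. This is the cleanest route because it directly exhibits $F$ as a non-negative measure (consistent with the non-negativity $\hat{F}(\mathbf{k}) = |\hat\rho(\mathbf k)|^2 \ge 0$ found above), and it avoids having to assume $F$ is a classical function. Together these four short arguments establish all of (A1)--(A4).
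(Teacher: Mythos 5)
Your proposal is correct and is essentially the same proof as the paper's: the paper computes the identical pairings using real trigonometric addition formulas — its key identity $\langle F, \cos(2\pi \mathbf{k}\cdot\mathbf{x})\rangle = \langle \rho, \cos(2\pi \mathbf{k}\cdot\mathbf{x})\rangle^2 + \langle \rho, \sin(2\pi \mathbf{k}\cdot\mathbf{x})\rangle^2$ is exactly your factorization $\hat{F}(\mathbf{k}) = |\hat{\rho}(\mathbf{k})|^2$ written in real form, and your $\hat F(-\mathbf{k})=\hat F(\mathbf{k})$ observation plays the role of the paper's sine-mode cancellation. The only cosmetic difference is in (A1)--(A2), where the paper pairs $\rho$ against the continuous convolution $U(\mathbf{x}) = \int_{\Omega}\rho(\mathbf{y})u(\mathbf{x}-\mathbf{y})\,\mathrm{d}\mathbf{y}$ while you integrate directly against the product measure $\rho\otimes\rho$; both are valid ways of handling the measure-theoretic bookkeeping.
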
 
% --------------------------------------------------------------------------- 80
\begin{proof}  
	The proof of (A1)--(A4) is straightforward and done by a direct calculation 
	of the appropriate integrals $\langle \cdot, \cdot \rangle$. 
	If $F(\mathbf{x}) \in \mathcal{A}$, then 
	$F(\mathbf{s}) = \int_{\Omega} \rho(\mathbf{x}) \rho(\mathbf{x}+\mathbf{s})\du \mathbf{x}$ 
	for some $\rho(\mathbf{x}) \in \mathcal{C}_1$. 
	The integral $\langle F, u\rangle$ can then be written as:
	\[
	\langle F, u \rangle = \int_{\Omega}\int_{\Omega}\rho(\mathbf{x})\rho(\mathbf{y})
	u(\mathbf{x}-\mathbf{y}) \du \mathbf{x} \du \mathbf{y} =	
	\langle \rho, U \rangle, \textrm{ where } 
	U(\mathbf{x}) := 
	\int_{\Omega} \rho(\mathbf{y})u(\mathbf{x}-\mathbf{y}) \du \mathbf{y}.
	\]
	For (A1), take any continuous, non-negative function $u(\mathbf{x}) \geq 0$
	to integrate against $F(\mathbf{x})$. 
	Then, since $\rho(\mathbf{x}) \in \mathcal{C}_1$, the function
	$U(\mathbf{x}) \geq 0$ 
	is non-negative, and also continuous since it is a convolution. 
	Hence, integrating $U(\mathbf{x})$ against $\rho(\mathbf{x})$ is also 
	non-negative, implying:
	$\langle F, u \rangle~=~\langle \rho, U\rangle \geq 0$.\\
	For (A2), taking $u(\mathbf{x}) = 1$ in the definition for $U(\mathbf{x})$ 
	implies that $U(\mathbf{x}) = 1$. It then follows that
	$\langle F, 1 \rangle = \langle \rho, 1 \rangle = 1$. \\
	For (A3), integrating $F(\mathbf{x})$ against any sine mode, 
	$\sin(2\pi \mathbf{k}\cdot \mathbf{x})$, yields:
	\begin{align*}
	\langle F, \sin(2\pi \mathbf{k} \cdot \mathbf{x} ) \rangle 
	&= \int_{\Omega} \int_{\Omega} \rho(\mathbf{x}) \rho(\mathbf{y}) 
	\Big(\sin(2\pi\mathbf{k}\cdot \mathbf{x})\cos(2\pi\mathbf{k}\cdot \mathbf{y}) 
	- \sin(2\pi\mathbf{k}\cdot \mathbf{y})\cos(2\pi\mathbf{k}\cdot \mathbf{x})\Big)
	\du \mathbf{x} \du \mathbf{y} \\
	&= 0.
	\end{align*}
	Note also that a similar calculation shows that $F(\mathbf{x})$ is mirror 
	symmetric, i.e., for any continuous function $u(\mathbf{x})$, one has 
	$\langle F(-\mathbf{x}), u(\mathbf{x})\rangle 
	= \langle F(\mathbf{x}), u(\mathbf{x})\rangle$.  Here $(-\mathbf{x})$,
	denotes the simultaneous negation of all coordinates (see also
	Remark \ref{SymmetricW}). \\
	Finally, for (A4), integrating $F(\mathbf{x})$ against any cosine mode, 
	$\cos(2\pi \mathbf{k}\cdot \mathbf{x})$, yields:
	\begin{align*}
	\langle F, \cos(2\pi \mathbf{k} \cdot \mathbf{x} ) \rangle 
	&= \int_{\Omega} \int_{\Omega} \rho(\mathbf{x}) \rho(\mathbf{y}) 
	\cos(2\pi \mathbf{k} \cdot (\mathbf{x}-\mathbf{y}) ) 
	\du \mathbf{x} \du \mathbf{y}, \\ \nonumber
	&= \left|\langle \rho, \cos(2\pi \mathbf{k} \cdot \mathbf{x})\rangle\right|^2 
	+  \left|\langle \rho, \sin(2\pi \mathbf{k} \cdot \mathbf{x})\rangle\right|^2\geq 0.
	\end{align*}
% --------------------------------------------------------------------------- 80	
\end{proof}

\begin{remark} \label{Rmk:NonconvexEx}
	(The set $\mathcal{A}$ is not convex)
	To show that $\mathcal{A}$ is not convex take 
	$f_1(x) = 1 + \cos(2\pi x)$ and $f_2(x) = 1 + \cos(2n\pi x)$ on 
	$\Omega = [0, 1]$, where $n \gg 1$ is a large integer. The convex 
	combination of 
	\[ 
	\lambda (f_1 \circ f_1) + (1-\lambda) (f_2 \circ f_2) 
	= 1 + \frac{1}{4}\cos(2\pi x) + \frac{1}{4}\cos(2n\pi x), 
	\] 
	when $\lambda = \frac{1}{2}$, must come from an auto-correlation
	of a function taking the form (with arbitrary phases $\varphi_{1}, \varphi_2$) 
	\[ 
	f_3(x) = 1 + \frac{1}{\sqrt{2}}\cos(2\pi x - \varphi_1) 
	+ \frac{1}{\sqrt{2}}\cos(2n \pi x - \varphi_2).
	\]
	Choosing $n$ large enough, the minimum value of $f_3(x)$, regardless 
	of the values $\varphi_1$, $\varphi_2$, can be made arbitrarily 
	close to $1-\sqrt{2} < 0$. Hence, for sufficiently large $n$,
	there is no non-negative function $f_3(x)$ with auto-correlation 
	$(\lambda f_1 \circ f_1 + (1-\lambda) f_2 \circ f_2)$. 
\end{remark}

Properties (A1)--(A2) characterize $F(\mathbf{x}) \in \mathcal{A}$ as a 
probability measure,
and therefore show that the set $\mathcal{A}$ is a subset of the convex cone 
$\mathcal{C}_1$, i.e., $\mathcal{A} \subseteq \mathcal{C}_1$. 
Properties (A3)--(A4), are related to a standard result in signal 
processing--that the Fourier series of an auto-correlation is a 
\emph{power spectrum}.  In the case
at hand, (A3)--(A4) motivate the definition of a second convex cone, 
$\mathcal{C}_2$, defined by measures having non-negative cosine 
modes and zero sine modes:
\begin{align}
	\mathcal{C}_2 &:= \Big\{  f \in C^0(\Omega)' : 
	\langle f, \cos(2\pi \mathbf{k} \cdot \mathbf{x} )\; \rangle \geq 0, \; 
	\langle f, \; \sin(2\pi \mathbf{k} \cdot \mathbf{x}) \; \rangle = 0, \; 
	\forall \; \mathbf{k} \in \mathbbm{Z}^d \setminus \mathbf{0} \; \Big\}.
\end{align}
% --------------------------------------------------------------------------- 80
Hence, properties (A3)--(A4) show that the set $\mathcal{A}$ is also a 
subset of $\mathcal{C}_2$, i.e., $\mathcal{A}\subseteq \mathcal{C}_2$.
Finally, taking the properties (A1)--(A4) together, we define the set
\begin{align*}
	\mathcal{C} := \Big\{  f \in C^0(\Omega)' &: 
	\textrm{ for all continuous } u(\mathbf{x}) \geq 0, \textrm{ and }
	\; \mathbf{k} \in \mathbbm{Z}^d \setminus \mathbf{0}, \\ 
	&\langle f, \cos(2\pi \mathbf{k} \cdot \mathbf{x} )\; \rangle \geq 0, 
	\quad \langle f, u(\mathbf{x}) \rangle \geq 0,\\
	&\langle f, \; \sin(2\pi \mathbf{k} \cdot \mathbf{x}) \; \rangle = 0, \; 
	\quad \langle f, 1 \rangle = 1, \quad \; \Big\}.
\end{align*}
Proposition~\ref{Prop_properties_of_A} may then be alternatively
stated as: $\mathcal{A}$ is a subset of $\mathcal{C}$, i.e., 
$\mathcal{A} \subseteq \mathcal{C}$.  
Our goal now is to extend 
the non-convex set $\mathcal{A}$, in the optimization of (P), 
to a relaxed set $\mathcal{C}$.
The purpose of introducing $\mathcal{C}_1$ and $\mathcal{C}_2$ is to
identify the set $\mathcal{C}$ as a convex subset of a convex cone.
To this end, we make the following remarks characterizing $\mathcal{C}$:
\begin{remark}\label{Rmk:characterizationC}
(The set $\mathcal{C}$ is a convex cone with an affine constraint) 
The set $\mathcal{C}$ is defined through linear constraints and 
inequalities, and hence
is a convex set. However, $\mathcal{C}$ may also alternatively
be written as
\[
	\mathcal{C} = \Big\{ f : 
	f \in \mathcal{C}_1 \cap \mathcal{C}_2, \textrm{ and } \langle f, 1 \rangle 
	= 1\Big\}.
\]
Here we have used the cones $\mathcal{C}_1$ and $\mathcal{C}_2$ to 
represent the properties (A1), (A3) and (A4) in the definition of 
$\mathcal{C}$. Since both $\mathcal{C}_1$ and $\mathcal{C}_2$ are 
convex cones, by the intersection properties of convex cones, it 
follows that $\mathcal{C}_1\cap \mathcal{C}_2$ is also a convex cone. 
As a result, the set $\mathcal{C}$ may be interpreted 
as the convex cone $\mathcal{C}_1\cap \mathcal{C}_2$, whose elements
satisfy the additional affine constraint $\langle f, 1 \rangle = 1$.  
\end{remark} 

\begin{remark} \label{Rmk:strictsubset}
	(The set $\mathcal{C}$ contains elements that are not in $\mathcal{A}$).  
	Consider
	$\Omega = [0,1]$ and $F(x) = 1 + \cos(2\pi x) \in \mathcal{C}$. Then, 
	only functions of the form $f(x) = 1 + \sqrt{2} \cos(2\pi x - \varphi)$, 
	for any $\varphi$, have auto-correlations equal to $F(x)$.   
	Since $f(x)$ contains negative values, then $f(x) \notin \mathcal{C}_1$ 
	showing that $F(x) \notin \mathcal{A}$.  Moreover, a similar calculation
	shows that $F(x)$ cannot be written as the convex combination of 
	two, or even a finite number of, elements in $\mathcal{A}$, i.e., 
	$F(x) \neq \lambda f_1 \circ f_1 + (1-\lambda) f_2 \circ f_2$, for
	probabilities $f_1(x)$ and $f_2(x)$ and $0 \leq \lambda \leq 1$.
	This suggests that $F(x)$ cannot be approximated by convex combinations
	of elements in $\mathcal{A}$. 
\end{remark}
% --------------------------------------------------------------------------- 80

% --------------------------------------------------------------------------- 80	

We now define the relaxed problem by extending the set $\mathcal{A}$ 
to the convex set $\mathcal{C}$:

\begin{align} \nonumber
	(R) \quad \quad &\textrm{Minimize } \quad  \frac{1}{2} \langle F, W \rangle \\ \nonumber
	&\textrm{subject to} \quad  \langle F, \cos(2\pi \mathbf{k}\cdot\mathbf{x})  \rangle \geq 0, \quad \quad \langle F, u(\mathbf{x})  \rangle \geq 0, \\ \nonumber
	&\phantom{\textrm{subject to}}\quad	\langle F, \sin(2\pi \mathbf{k}\cdot\mathbf{x})  \rangle = 0, \quad \quad \langle F, 1 \rangle = 1,  \\ \nonumber
 	&\textrm{for all integers } \mathbf{k} \in \mathbbm{Z}^d \setminus \mathbf{0} \textrm{ and non-negative continuous functions } u(\mathbf{x}) \geq 0.  	 			  
\end{align}	
We denote any solution to (R) as $F_R(\mathbf{x})$, and set 
$\mathcal{E}_R = \frac{1}{2}\langle F_{R}, W\rangle$. Moreover, we note 
that $\mathcal{E}_R$ is a lower bound to $\mathcal{E}_0$, 
i.e., $\mathcal{E}_0 \geq \mathcal{E}_R$, since (R) can be understood 
as optimizing (P) over a feasible set $\mathcal{C}$ that contains
$\mathcal{A}$.

	As discussed in Remark~\ref{Rmk:characterizationC}, the constraints 
	in (R) are both (i) linear in 
	$F(\mathbf{x})$, and (ii) (up to the affine constraint 
	$\langle F, 1\rangle = 1$) restricted to lie in the convex cone 
	$\mathcal{C}_1 \cap \mathcal{C}_2$. This will lead to numerical 
	discretizations of (R) that take the form of a \emph{conic linear 
	programming} problem.  Note that in practice when solving (R), 
	it is often better to enforce the mirror symmetry of 
	$F(\mathbf{x})$ directly, and remove redundant 
	$\mathbf{k}$ values (i.e. $\mathbf{k}$ and $-\mathbf{k}$ yield the
	same constraint)
	for the cosine constraints. 
	This will also allow for the removal 
	of the sine constraints in (R), and reduce the 
	size of the domain $\Omega$, and hence the optimization 
	problem.
	A few remarks are now in order:

% --------------------------------------------------------------------------- 80

% --------------------------------------------------------------------------- 80

% --------------------------------------------------------------------------- 80

% --------------------------------------------------------------------------- 80
\begin{remark} \label{Rmk:verification}
	(Sufficient conditions for a global minimizer)
	The relaxation (R) may in some cases verify that a candidate minimizer 
	$\rho^*(\mathbf{x})$ solves (P). Suppose $\rho^*(\mathbf{x})$ is a probability 
	distribution with auto-correlation $F_R(\mathbf{x})$.  Then, since any probability
	distribution is by definition larger than the minimizer 
	$\mathcal{E}(\rho^*) = \mathcal{E}_R \geq \mathcal{E}_0 \geq \mathcal{E}_R$. 
	Therefore, one has $\mathcal{E}_R = \mathcal{E}_0$, which implies that
	$\rho_0(\mathbf{x}) = \rho^*(\mathbf{x})$ is a, possibly non-unique, global
	minimum.  
\end{remark}

% --------------------------------------------------------------------------- 80
\begin{remark} \label{Rmk:lattic_exact} 
	(Lattices are exact) If the solution $F_R(\mathbf{x})$ forms a 
	periodic lattice pattern\footnote{
	A lattice $X$ is the infinite array of discrete points
	defined by a set of primitive vectors $\mathbf{v}_j$:
	$X = \{ \sum_{j = 1}^d n_j \mathbf{v}_j: 
	n_j \in \mathbb{Z}, \textrm{for } 1 \leq j \leq d \} $.  
	Take $\chi = X \cap \Omega$ as the points in $X$ restricted
	to the computational domain. Hence $\chi$ may be defined for any $X$.
	We refer here to $\chi$ as a \emph{lattice pattern} if $X$ can 
	be written as translated copies of $\chi$:
	$X = \cup_{\vec{n} \in \mathbb{Z}^d} (\chi + \vec{n})$.  Note that
	$\chi$ may be a set that is larger than one containing the primitive 
	lattice vectors, and that $X$ cannot always be written as the collection
	of translated copies of $\chi$ (in which case $\chi$ would not be a lattice
	pattern). }
%	In this case, $\chi$ is 
%	a periodic lattice if it is the restriction of an infinite 
%	lattice $X$ (that also has periodicity $\Omega$) to the domain $\Omega$, 
%	i.e. $\chi = X \cap \Omega$. Here, $X$ has the form: (i)
%	$X = \{ \sum_{j = 1}^d n_j \mathbf{v}_j: 
%	n_j \in \mathbb{Z}, \textrm{for } 1 \leq j \leq d \} $,
%	where $\mathbf{v}_1, \cdots, \mathbf{v}_d$ are $d$ 
%	lattice vectors (which may possibly be $\mathbf{0}$), and (ii)
%	$X$ is periodic on $\Omega$.} 
	$\chi \subset \Omega$
	\begin{align} \label{DiscreteLattice}
		F_R(\mathbf{x}) = \frac{1}{|\chi|}\sum_{\mathbf{s}\in \chi} 
		\delta(\mathbf{x} - \mathbf{s}), 
	\end{align}
	where $|\chi|$ is the number of points in the lattice pattern, 
	then the relaxation is exact. For solutions of the form (\ref{DiscreteLattice}), 
	$F_R \circ F_R = F_R(\mathbf{x})$.  Hence, taking 
	$\rho^*(\mathbf{x}) = F_R(\mathbf{x})$, 
	satisfies $F_R(\mathbf{x}) = \rho^* \circ \rho^*$ thereby 
	implying that a lattice is 
	the global minimizer.
	
	Minimizers that take the form of a lattice are of great physical interest, 
	as they explain why matter may form crystal structures.  Proofs that 
	particle models, in the large particle number limit, exhibit lattice 
	minimizer have been	done for sticky disk models \cite{HeitmannRadin1980, Radin1981},
	Lennard-Jones type interaction potentials \cite{Theil2006}, and energies 
	which include the sum of Lennard-Jones interaction potentials and three particle
	interactions \cite{ELi2009, FarmerEsedogluSmereka2016}.
\end{remark}

% --------------------------------------------------------------------------- 80

In Appendix B, we discuss how to numerically discretize
and solve (R).  In general, we observe that numerical solutions convergence to 
either (i) classical functions $F_R(\mathbf{x})$ that are continuous, 
i.e., $F_R(\mathbf{x}) \in \mathcal{C}^0(\Omega)$, or (ii) non-classical
functions $F_R(\mathbf{x})$ that consist of a finite collection of Dirac point
masses. Motivated by Remark~\ref{Rmk:verification}, the following section 
presents one approach for recovering a candidate minimizer $\rho^*(\mathbf{x})$ 
using $F_R(\mathbf{x})$. 

In the case when $F_R(\mathbf{x})$ is a collection of Dirac masses, we will 
expect recovered candidates $\rho^*(\mathbf{x})$ to also be a collection 
of Dirac masses.  This is because the auto-correlation of a discrete set of Dirac masses 
is a discrete set of Dirac masses. In contrast, when $F_R(\mathbf{x})$ is a 
continuous function, we will expect $\rho^*(\mathbf{x})$ to be in $L^2(\Omega)$, 
and typically take the form of a piece-wise continuous function, i.e., since the 
auto-correlation of a piece-wise continuous function is continuous.

% --------------------------------------------------------------------------- 80
\section{Recovering $\rho^*(x)$ from $F_R(x)$ by minimizing a relative entropy}\label{Sec_KLrecovery}
In this section we outline a procedure for recovering a candidate global 
minimizer $\rho^*(\mathbf{x})$ from knowledge of the solution to (R), i.e., $F_R(\mathbf{x})$.
In general, the relaxed space $\mathcal{C}$, and therefore solutions to 
problem (R) may include measures that are not auto-correlations of probabilities 
(see Remark~\ref{Rmk:strictsubset}).  Hence, $\mathcal{A} \subset \mathcal{C}$ 
is only a proper subset of $\mathcal{C}$ and as a result, the solution $F_R(\mathbf{x})$ 
may not come from an auto-correlation of a probability distribution.

The problem of recovering $\rho^*(\mathbf{x})$ from $F_R(\mathbf{x})$ is equivalent to 
deauto-correlating a function $F(\mathbf{x}) \in \mathcal{C}$, with the caveat that the 
source function $\rho^*(\mathbf{x})$ is also a probability distribution.  
The additional non-negativity restriction, i.e., $\rho^*(\mathbf{x}) \in \mathcal{C}_1$,
distinguishes the phase recovery problem at hand from other phase recovery 
problems recently studied in the context of signal processing 
\cite{CandesLi2012, CandesLiSoltanolkotabi2014, CandesTao2010, JaganathanOymakHassibi2012, Torquato2006}.  
In our recovery process we follow a procedure introduced by Schulz and Snyder 
\cite{SchulzSnyder1992} (see also \cite{SchulzVoelz2005}) which chooses 
$\rho^*(\mathbf{x})$ as a minimizer of the Kullback-Leibler divergence functional (also 
known as the information divergence) between $F_R(\mathbf{x})$ and the auto-correlation 
$F_{\rho}(\mathbf{x}) = \rho \circ \rho$. As discussed in 
\cite{SchulzSnyder1992} (and references within) the information divergence 
functional has many nice properties for the recovery of non-negative signals 
making it a natural choice for the recovery of $\rho^*(\mathbf{x})$.  

% --------------------------------------------------------------------------- 80

	In this discussion we assume that 
	$F_R(\mathbf{x}) \in \mathcal{C} \cap C^{0}(\Omega)$ is a continuous 
	function, however the approach here can also be extended
	to handle cases where $F_R(\mathbf{x})$ is a collection of discrete Delta masses.
	The Kullback-Leibler divergence is defined as
	\begin{align} \label{KullbackLeiblerCts}
		\mathcal{F}(\rho) &:= \int_{\Omega} F_R(\mathbf{x}) 
		\ln\Big( \frac{F_R(\mathbf{x})}{F_{\rho}(\mathbf{x}) } \Big) \; 
		\du \mathbf{x} = \int_{\Omega} F_R(\mathbf{x}) 
		\ln\Big( \frac{F_R(\mathbf{x})}{ \rho\circ\rho } \Big) \; \du \mathbf{x},
	\end{align}
	where we assume that $\rho(\mathbf{x}) \in \mathcal{C} \cap L^2(\Omega)$ 
	with $\int_{\Omega}\rho(\mathbf{x}) \du \mathbf{x} = 1$. 
	In the definition of $\mathcal{F}(\rho)$, one adopts the conventions:
	\begin{align} \label{KL_Conventions}
		0 \ln \frac{0}{a} := 0, \quad \quad	0 \ln \frac{0}{0} := 0, 
		\quad \quad a \ln \frac{a}{0} := \infty,   
	\end{align}
to allow for both $F_R(\mathbf{x})$ and $\rho\circ \rho$ to vanish on some set.  

% --------------------------------------------------------------------------- 80	
	Viewing both $F_R(\mathbf{x})$ and $F_{\rho}(\mathbf{x})$ as probability 
	distributions, the Kullback-Leibler divergence, defined by $\mathcal{F}(\rho)$, 
	measures the mismatch between probabilities $F_{R}(\mathbf{x})$ and 
	$F_{\rho}(\mathbf{x})$.  Although $\mathcal{F}(\rho)$ does 
	not define a metric between $F_R(\mathbf{x})$ and $F_{\rho}(\mathbf{x})$, 
	for instance since it is not symmetric, it is always non-negative 
	$\mathcal{F}(\rho) \geq 0$, and may still be used to guarantee an exact 
	match between $F_{R}(\mathbf{x})$ and $F_{\rho}(\mathbf{x})$.  
	Specifically, $\mathcal{F}(\rho^*) = 0$,  only when 
	$F_{R}(\mathbf{x}) = F_{\rho}(\mathbf{x})$ (See, for instance, Pinsker's inequality 
	in Chapter 2 of \cite{Massart2003}). 
	Hence, in light of Remark~\ref{Rmk:verification}, we have the following alternative
	sufficient condition for a global minimizer--which motivates the minimization
	of $\mathcal{F}(\rho)$: 
	\begin{remark}\label{Rmk:eqv_suff_cond}
	(Equivalent sufficient condition for a global minimizer)
	Let $F_R(\mathbf{x})$ solve (R).  Then if $\mathcal{F}(\rho^*) = 0$, it 
	follows that 
	$\rho^*(\mathbf{x})$ solves (P).  For instance, if $\mathcal{F}(\rho^*) = 0$,
	then the two auto-correlations are equal:
	$F_R(\mathbf{x}) = F_{\rho^*}(\mathbf{x}) = \rho^*\circ\rho^*$,
	so that the conditions in Remark~\ref{Rmk:verification} are satisfied.
	\end{remark}
	
% --------------------------------------------------------------------------- 80	
To compute minimizers of $\mathcal{F}(\rho)$ we use the Schulz-Snyder iterative 
algorithm\footnote{In the original paper \cite{SchulzSnyder1992}, the 
functional $\mathcal{F}(\rho)+\int_{\Omega} F_R(\mathbf{x}) - F_{\rho}(\mathbf{x}) \du \mathbf{x} $ 
was used instead of $\mathcal{F}(\rho)$. Due to the fact that the Schulz-Snyder 
iterative algorithm conserves the constraint 
$\int_{\Omega} F_{\rho}(\mathbf{x}) \du \mathbf{x} = 1$, the discrepancy in 
functional definition has no effect on the algorithm or results.}, which is 
an iterative method on the space of non-negative 
probabilities. 
% --------------------------------------------------------------------------- 80		
	The advantage of the Schulz-Snyder algorithm is not only that it 
	minimizes the functional $\mathcal{F}(\rho)$, but the 
	iterations naturally enforce the probability constraints.    
	As a result, the algorithm is very easy to implement.

% --------------------------------------------------------------------------- 80	
	We now briefly summarize the derivation, and properties of the Schulz-Snyder 
	algorithm. The idea is to iterate the Euler-Lagrange equation that one
	obtains by taking the first variation of $\mathcal{F}(\rho)$. 
	 Namely, the Euler-Lagrange equation of 
	(\ref{KullbackLeiblerCts}) is given as follows: For any mean zero 
	perturbation $g(\mathbf{x})$ whose support is contained in the support 
	of $\rho^*(\mathbf{x})$, the first variation vanishes
	\begin{align}	\label{Perturbation}
	 \int_{\Omega} g(\mathbf{x}) 
	 \frac{\delta \mathcal{F}}{\delta \rho}(\rho^*) = 0, 
	 \quad \Longrightarrow \quad  
	 \frac{\delta \mathcal{F}} {\delta \rho}(\rho^*) 
	 = \textrm{const. } \textrm{ for any } \mathbf{x} 
		\in \textrm{supp}(\rho^*). 
	 \end{align}
% --------------------------------------------------------------------------- 80	 
	By direct calculation, the (unconstrained) $L^2$ variation of 
	$\mathcal{F}(\rho)$ is:
	\begin{align} \label{L2_Variation}
		\frac{\delta \mathcal{F}}{\delta \rho}(\rho) 
		= -2 \int_{\Omega}\rho(\mathbf{x} + \mathbf{y})
		\frac{F_R(\mathbf{y})}{F_{\rho}(\mathbf{y})} \du \mathbf{y}. 
	\end{align}
	Hence, multiplying (\ref{L2_Variation}) by $\rho^*(\mathbf{x})$ and 
	integrating over space yields the constant in (\ref{Perturbation}). 
	Critical points of $\mathcal{F}(\rho)$ satisfying the first variation
	conditions are then concisely described by
\begin{align} \label{ConciseStableCP}
	\frac{\delta \mathcal{F}}{\delta \rho}(\rho^*) \left\{
\begin{array}{ll}
= -2 & \text{if } \rho^*(\mathbf{x}) > 0,\\
> -2 & \text{if } \rho^*(\mathbf{x}) = 0,
\end{array}\right. 
\quad \Longrightarrow \quad \rho^*(\mathbf{x}) 
= \rho^*(\mathbf{x}) \int_{\Omega} \rho^*(\mathbf{x}+\mathbf{y}) 
\frac{F_R(\mathbf{y})}{F_{\rho^*}(\mathbf{y})} \du \mathbf{y}. 
\end{align}	
% --------------------------------------------------------------------------- 80
	Equation (\ref{ConciseStableCP}) may now be used to devise an iterative 
	fixed-point algorithm:
	
	\begin{algorithm}[htb!]
		\begin{flushleft}
		{\bf Recovering $\rho^*(\mathbf{x})$ from $F_R(\mathbf{x})$ (Schulz-Snyder) }
		\begin{enumerate}
			\item Initialize $\rho_{0}(\mathbf{x}) > 0 $ to be strictly positive 
			with $\int_{\Omega}\rho_0(\mathbf{x})\du\mathbf{x} = 1$.  Ensure that 
			$\rho_{0}(\mathbf{x})$ has no planes of symmetry: for any fixed vector 
			$\mathbf{a}$, the shifted $\rho_0(\mathbf{x})$ is not even symmetric 
			$\rho_{0}(\mathbf{a}-\mathbf{x} ) \neq \rho_{0}(\mathbf{x}-\mathbf{a})$.
			\item Iterate the discrete mapping:
		\begin{align} \nonumber
			&\rho_{n+1}(\mathbf{x}) = - \frac{1}{2}\rho_n(\mathbf{x}) 
			\frac{\delta \mathcal{F}}{\delta \rho}(\rho_n), \quad \quad \quad \quad 
			\textrm{ for } \mathbf{x} \in \Omega, \textrm{and } n = 1, 2,3 \ldots \\ \nonumber
			&\phantom{\rho_{n+1}(\mathbf{x})} = \rho_{n}(\mathbf{x}) \int_{\Omega} 
			\rho_{n}(\mathbf{x}+\mathbf{y}) \frac{F_R(\mathbf{y})}{F_{\rho_n}(\mathbf{y})}  
			\du\mathbf{y}, \\ \nonumber
			&\textrm{where}\quad	F_{\rho_n}(\mathbf{x}) = \rho_n \circ \rho_n 
			\textrm{ and we have used the fact that } F_R(\mathbf{y}) = F_R(-\mathbf{y}).
		\end{align}
			\item Take $\rho^*(\mathbf{x}) = \rho_{\infty}(\mathbf{x})$ as the 
			candidate global minimizer to (P).
		\end{enumerate}
		\end{flushleft}
	\end{algorithm}

	The algorithm also 
	ensures the following properties, which we state without proof\footnote{Note: properties
	1, 2 and 4 are straight-forward to prove. See \cite{SchulzSnyder1992} for a proof of 
	a discrete version of the monotonicity property 3.}
	\begin{enumerate}
		\item (Positivity preserving) $\rho_{n}(\mathbf{x}) \geq 0$ for all 
		$\mathbf{x}$ and $n \geq 0$.
		\item (Mass preserving) 
		$\int_{\Omega} \rho_n(\mathbf{x})\du \mathbf{x} = 1$ for all $n \geq 0$. 		
		\item (Monotonicity) $\mathcal{F}(\rho_{n+1}) \leq \mathcal{F}(\rho_n)$ 
		for all $n \geq 0$.
		\item (Fixed points) If $\rho^*(\mathbf{x})$ is a fixed point in the Schulz-Snyder 
		algorithm, then $\rho^*(\mathbf{x})$ satisfies the first variation
		conditions (\ref{ConciseStableCP}).
	\end{enumerate}
% --------------------------------------------------------------------------- 80	
	Finally, as prescribed in Step 1 of the Schulz-Snyder algorithm, it is 
	important to avoid initializing the data $\rho_0(\mathbf{x})$ to lie in any invariant 
	set of the iterative map from Step 2. Initializing the data $\rho_0(\mathbf{x})$ 
	to lie in an invariant set can potentially constrain the resulting 
	fixed point minimizer $\rho^*(\mathbf{x})$ to have the same symmetry as 
	$\rho_0(\mathbf{x})$. The Schulz-Snyder algorithm has invariant sets that 
	include the following subspaces:
	\begin{itemize}
		\item If $\rho_n(\mathbf{x}_p) = 0$ for some point 
		$\mathbf{x}_p \in \Omega$ then $\rho_{n+1}(\mathbf{x}_p) = 0$.
		\item If for a fixed vector $\mathbf{a}$, 
		$\rho_{n}(\mathbf{a}-\mathbf{x} ) = \rho_{n}(\mathbf{x}-\mathbf{a})$ 
		then $\rho_{n+1}(\mathbf{a}-\mathbf{x} ) = \rho_{n+1}(\mathbf{x}-\mathbf{a})$.
	\end{itemize} 
% --------------------------------------------------------------------------- 80	
	The first symmetry regarding $\rho_{n+1}(\mathbf{x}_p) = 0$ follows 
	directly from testing both sides of Step 2 in the iterative scheme at a 
	point $\mathbf{x}_p \in \Omega$. The 
	second property, regarding planes of symmetry, can be shown as well since 
	both $F_R(\mathbf{x})$ and $F_{\rho}(\mathbf{x})$ are mirror symmetric about $\mathbf{0}$:
	\begin{align} \nonumber
		\rho_{n+1}(\mathbf{a} - \mathbf{x}) &= \rho_n(\mathbf{a} - \mathbf{x}) 
		\int_{\Omega} \rho_n( \mathbf{a} - \mathbf{x} + \mathbf{y}) 
		\frac{F_R(\mathbf{y})}{F_{\rho_n}(\mathbf{y})} \du \mathbf{y}, \\ \nonumber
		&= \rho_n(\mathbf{a} - \mathbf{x}) \int_{\Omega} 
		\rho_n( \mathbf{a} - \mathbf{x} - \mathbf{y}) 
		\frac{F_R(\mathbf{y})}{F_{\rho_n}(\mathbf{y})} \du \mathbf{y}, \\ \nonumber
		&= \rho_n(\mathbf{x}-\mathbf{a} ) \int_{\Omega} 
		\rho_n( \mathbf{x} - \mathbf{a} + \mathbf{y}) 
		\frac{F_R(\mathbf{y})}{F_{\rho_n}(\mathbf{y})} \du \mathbf{y} 
		= \rho_{n+1}(\mathbf{x} + \mathbf{a}).
	\end{align}
% --------------------------------------------------------------------------- 80	

% --------------------------------------------------------------------------- 80

We now briefly discuss several numerical details of the Schulz-Snyder
algorithm.  One advantage with minimizing the Kullback-Liebler divergence over 
other norms or metrics is that the Schulz-Snyder algorithm may be numerically computed 
using integral quadrature rules, without enforcing non-negativity and mass 
constraints.  Moreover,
up to a negative sign in $\mathbf{x}$, the integral in Step 2 of the algorithm has the
form of a convolution--which may also be computed in an efficient manner using
the fast Fourier transform.  Finally, regarding the convergence rate of the scheme, 
one might heuristically expect it to behave in a fashion similar to other
iterative methods with an exponential convergence at large $n$, i.e., 
$|\mathcal{F}(\rho_n) - \mathcal{F}(\rho_{\infty})| \sim \gamma^n$, for a
value of $0 < \gamma < 1$.  Together these properties make using the Kullback-Liebler
divergence an attractive approach for practitioners.

In Section~\ref{Sec:preliminaries}, necessary conditions for a candidate 
minimizer to solve (P) were given by equations (\ref{KKT_for_P}), (\ref{ExtendedFirstOrder_condition}) and (\ref{SecondOrder_condition}).
Although numerical examples in Sections~\ref{sec:NumericalResults_d1} and 
\ref{sec:NumericalResults_d2} provide supporting evidence that solutions to 
equation (\ref{ConciseStableCP}) may (at least in some cases) satisfy
 (\ref{KKT_for_P}), (\ref{ExtendedFirstOrder_condition}) and (\ref{SecondOrder_condition}), we have no formal 
proof of such a result.  The minimization via the Schultz-Snyder 
algorithm does however often recover candidates $\rho^*(\mathbf{x})$ 
with $F_{\rho^*}(\mathbf{x})$ having the same support as 
$F_R(\mathbf{x})$--which, as we will show through 
the introduction of the dual formuation to (R), guarantees the
necessary condition related to (\ref{SecondOrder_condition}) in 
Remark \ref{E_Convex_on_Z}.

% --------------------------------------------------------------------------- 80
\section{The Dual decomposition} \label{sec_dualdecomposition}

The purpose of this section is to formulate the dual 
optimization problem to the convex 
relation (R), and show how it may be used, in some cases, to 
explain why the supports of the recovered minimizers $\rho^*(\mathbf{x})$ 
satisfy the necessary conditions in Remark \ref{E_Convex_on_Z}. 
This will be done in two steps.  First, the dual formulation will provide a 
decomposition of the pairwise energy $\mathcal{E}(\rho)$ 
that takes the form of a non-convex/convex splitting:
\begin{align}\label{dual_decomp_functional}
	\mathcal{E}(\rho) = \mathcal{E}^+(\rho) + \mathcal{K}(\rho),
\end{align}
where 
\begin{enumerate}
	\item $\mathcal{E}^+(\rho) \geq 0$, is a non-negative functional for 
	all non-negative measures $\rho(\mathbf{x}) \in \mathcal{C}_1$, and, in 
	general, is non-convex. 
	\item $\mathcal{K}(f)$ is convex for all finite measures $f(\mathbf{x})$. 
	Namely, for all $0 \leq \lambda \leq 1$ and 
	$f_1(\mathbf{x}), f_2(\mathbf{x})$ (which may be negative), 
	one has:
		\[
		\mathcal{K}(\lambda f_1 + (1-\lambda) f_2) \leq \lambda \mathcal{K}(f_1) 
		+ (1-\lambda) \mathcal{K}(f_2).
		\]
\end{enumerate}
Second, the non-negative part of the decomposition 
(\ref{dual_decomp_functional}), $\mathcal{E}^+(\rho)$, will be used 
to provide a sufficient condition to satisfy the necessary conditions in Remark \ref{E_Convex_on_Z}. 

% --------------------------------------------------------------------------- 80
Decompositions of the form given by (\ref{dual_decomp_functional}) are in
general not unique. However, the dual formulation to (R) will provide such a 
decomposition that also maximizes the minimum value of the convex functional 
$\mathcal{K}(\rho)$ over probabilities $\rho(\mathbf{x})$.  In other words, 
we will seek $\mathcal{K}(\rho)$ to be, in some sense, the largest convex 
functional that underestimates $\mathcal{E}(\rho)$. As a result, the optimal 
functional $\mathcal{K}(\rho)$ that we compute has a strong resemblance to 
the convex envelope of $\mathcal{E}(\rho)$.

We will show below that an optimal decomposition of the form
(\ref{dual_decomp_functional}) may be formulated as the dual problem
to (R) -- and therefore computed with the same computational 
cost as solving (R).  Here the construction of the optimal decomposition of 
the form (\ref{dual_decomp_functional}) will arise by decomposing the 
interaction energy $W(\mathbf{x})$ into the sum of a non-negative function, 
and a function with non-negative cosine modes. 

% --------------------------------------------------------------------------- 80
To motivate the dual formulation to (R), first consider any decomposition for 
$W(\mathbf{x})$ that takes the form
\begin{align} \label{ConicDecomposition}
	W(\mathbf{x}) = W^+(\mathbf{x}) + K(\mathbf{x}) + 2\mathcal{E}_D,
\end{align}
where
%
% --------------------------------------------------------------------------- 80
\begin{enumerate}
	\item[(D1)] $0 \leq W^+(\mathbf{x}) \in \mathcal{C}^0(\Omega)$ is a 
	continuous, non-negative, mirror symmetric function 
	(See Remark~\ref{SymmetricW}).
	\item[(D2)] $K(\mathbf{x})$ is a continuous, mirror symmetric, 
	mean-zero function
	with real non-negative cosine coefficients, i.e.:
	\begin{align*}
		\hat{K}(\mathbf{k}) &:= \int_{\Omega} K(\mathbf{x}) 
		\cos(2\pi \mathbf{k} \cdot \mathbf{x}) \du \mathbf{x} \geq 0, 
		\textrm{ for all } \mathbf{k} \in \mathbbm{Z}^{d} \setminus \mathbf{0}, 
		\textrm{ and } \hat{K}(\mathbf{0}) = 0, \\
		K(\mathbf{x}) &= \sum_{\mathbf{k} \in \mathbbm{Z}^d}  
	\hat{K}(\mathbf{k})\cos(2\pi \mathbf{k}\cdot \mathbf{x}).
	\end{align*}	
	Note that the summation in the above cosine series includes all $\mathbf{k} \in \mathbb{Z}^d$,
	and	the inclusion of $\hat{K}(-\mathbf{k}) = \hat{K}(\mathbf{k})$ accounts for the 
	apparent missing factor of $2$.

	We also make the following technical assumption on the cosine coefficients of
	$K(\mathbf{x})$:
	\begin{align}\label{Assumption_Abs_cont}
		\sum_{\mathbf{k}\in \mathbbm{Z}^d} \hat{K}(\mathbf{k}) < \infty.
	\end{align}
	Assumption (\ref{Assumption_Abs_cont}) guarantees that the cosine 
	series for $K(\mathbf{x})$ converges uniformly, for instance by a Weierstrass M-test.
	Moreover (\ref{Assumption_Abs_cont}) will be sufficient to 
	use a Plancherel-type theorem when integrating $K(\mathbf{x})$ against probability 
	measures. 
	\item[(D3)] $\mathcal{E}_D$ is a constant.  Due to the normalization convention
	(W4), of $W(\mathbf{x})$, we see that 
	$\mathcal{E}_D = -\frac{1}{2}\int_{\Omega} W^+(\mathbf{x}) \du \mathbf{x} $ 
	will be negative for decompositions of the form (\ref{ConicDecomposition}).
\end{enumerate}

% --------------------------------------------------------------------------- 80
\begin{proposition}\label{Prop_ConicDecomp}
(Properties of the decomposition (\ref{ConicDecomposition}))
Any decomposition of the form (\ref{ConicDecomposition}) with properties
(D1)--(D3) satisfies the following:
	\begin{enumerate}
		\item The functions $W^+(\mathbf{x})$ and $K(\mathbf{x})$ are in 
		the dual cones to $\mathcal{C}_1$ and $\mathcal{C}_2$: i.e., 
		$W^+(\mathbf{x}) \in \mathcal{C}_1^*$, and $K(\mathbf{x}) \in \mathcal{C}_2^*$
		where the dual cone $X^*$ to a convex cone $X$ is given by:
			\[ 
				X^* := \{ x \in X': \langle x, y \rangle \geq 0, \forall y \in X\}.
			\]
		\item $\mathcal{E}_D \leq \mathcal{E}_R$ is a lower bound to (R).
		\item The following functional is non-negative:
			\[
				\mathcal{E}^+(\rho) := \frac{1}{2}\int_{\Omega}\int_{\Omega} 
				\rho(\mathbf{x}) \rho(\mathbf{y}) W^+(\mathbf{x}-\mathbf{y}) 
				\du \mathbf{x} \du \mathbf{y} \geq 0, \quad \textrm{for all } 
				\rho(\mathbf{x}) \in \mathcal{C}_1.
			\]
		\item The following functional is convex for $\rho(\mathbf{x}) \in \mathcal{C}_1$:
			\[
			\mathcal{K}(\rho) := \frac{1}{2}\int_{\Omega}\int_{\Omega} 
			\rho(\mathbf{x}) \rho(\mathbf{y}) K(\mathbf{x}-\mathbf{y}) 
			\du \mathbf{x} \du \mathbf{y} + \mathcal{E}_D,
			\]
	\end{enumerate}
\end{proposition}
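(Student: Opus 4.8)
The plan is to establish the four claims in the order 1, 3, 4, 2, since the dual-cone memberships of claim~1 feed directly into the other three. The one recurring technical point throughout is the interchange of the cosine series for $K(\mathbf{x})$ with integration against a measure; this is precisely what the summability hypothesis (\ref{Assumption_Abs_cont}) is designed to license, through uniform convergence of the series (Weierstrass $M$-test) together with dominated convergence against a finite measure. I would isolate this interchange once and reuse it in both claim~1 and claim~4.

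For claim~1, the membership $W^+\in\mathcal{C}_1^*$ is immediate: by (D1) the function $W^+(\mathbf{x})\geq 0$ is continuous, so for any non-negative measure $y\in\mathcal{C}_1$ the pairing $\langle y, W^+\rangle=\int_\Omega W^+ y\,\du\mathbf{x}\geq 0$, which is exactly the defining condition of $\mathcal{C}_1^*$. For $K\in\mathcal{C}_2^*$ I would take any $f\in\mathcal{C}_2$ and use the interchange to write
\[
\langle f, K\rangle = \sum_{\mathbf{k}\in\mathbbm{Z}^d}\hat{K}(\mathbf{k})\,\langle f,\cos(2\pi\mathbf{k}\cdot\mathbf{x})\rangle .
\]
Each summand is non-negative, since (D2) gives $\hat{K}(\mathbf{k})\geq 0$ while membership in $\mathcal{C}_2$ gives $\langle f,\cos(2\pi\mathbf{k}\cdot\mathbf{x})\rangle\geq 0$ (the sine modes contribute nothing, as $K$ has no sine part and $f$ has zero sine modes), so $\langle f, K\rangle\geq 0$.

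Claim~3 then follows by writing $\mathcal{E}^+(\rho)=\tfrac12\langle\rho\circ\rho, W^+\rangle$ and observing that $\rho\circ\rho\in\mathcal{C}_1$ by property (A1) of Proposition~\ref{Prop_properties_of_A}, so non-negativity is inherited from $W^+\in\mathcal{C}_1^*$ (equivalently, the integrand is a product of non-negative factors). For claim~4 I would compute the bilinear form exactly as in the proof of (A4): expanding $\cos(2\pi\mathbf{k}\cdot(\mathbf{x}-\mathbf{y}))$ and applying the interchange yields the Plancherel-type identity
\[
\int_\Omega\!\int_\Omega \rho(\mathbf{x})\rho(\mathbf{y})K(\mathbf{x}-\mathbf{y})\,\du\mathbf{x}\,\du\mathbf{y}
= \sum_{\mathbf{k}\in\mathbbm{Z}^d}\hat{K}(\mathbf{k})\Big(\langle\rho,\cos(2\pi\mathbf{k}\cdot\mathbf{x})\rangle^2+\langle\rho,\sin(2\pi\mathbf{k}\cdot\mathbf{x})\rangle^2\Big).
\]
Since $\hat{K}(\mathbf{k})\geq 0$, the right-hand side is a non-negative combination of squares of linear functionals of $\rho$, so the underlying symmetric bilinear form $B$ is positive semi-definite; the additive constant $\mathcal{E}_D$ does not affect convexity. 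I would then conclude via the same quadratic identity used in (\ref{ConvexSets}), namely $\lambda\mathcal{K}(\rho_1)+(1-\lambda)\mathcal{K}(\rho_2)-\mathcal{K}(\lambda\rho_1+(1-\lambda)\rho_2)=\tfrac12\lambda(1-\lambda)B(\rho_1-\rho_2,\rho_1-\rho_2)\geq 0$.

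Finally, claim~2 is where the pieces combine. For any feasible $F\in\mathcal{C}$ I would substitute the decomposition (\ref{ConicDecomposition}) into $\tfrac12\langle F,W\rangle$ to obtain $\tfrac12\langle F,W^+\rangle+\tfrac12\langle F,K\rangle+\mathcal{E}_D\langle F,1\rangle$. The first term is $\geq 0$ because $F\in\mathcal{C}_1$ (property A1) and $W^+\in\mathcal{C}_1^*$; the second is $\geq 0$ because $F\in\mathcal{C}_2$ (properties A3--A4) and $K\in\mathcal{C}_2^*$ by claim~1; and the third equals $\mathcal{E}_D$ by the affine constraint $\langle F,1\rangle=1$. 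Hence $\tfrac12\langle F,W\rangle\geq\mathcal{E}_D$ for every feasible $F$, and specializing to the minimizer $F_R$ gives $\mathcal{E}_R\geq\mathcal{E}_D$. I expect the only genuine obstacle to be the rigorous justification of the series/integral interchange underlying claims~1 and~4; once (\ref{Assumption_Abs_cont}) is invoked to guarantee uniform convergence of the cosine series, every remaining step reduces to routine tracking of signs.
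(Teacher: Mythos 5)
Your proposal is correct and takes essentially the same route as the paper's proof: the pointwise sign argument for $W^+ \in \mathcal{C}_1^*$, the uniformly convergent cosine-series interchange licensed by (\ref{Assumption_Abs_cont}) for $K \in \mathcal{C}_2^*$, the Plancherel-type identity with $\hat{K}(\mathbf{k}) \geq 0$ for convexity, and the pairing bound $\tfrac{1}{2}\langle F, W^+\rangle + \tfrac{1}{2}\langle F, K\rangle \geq 0$ together with $\langle F, 1\rangle = 1$ for claim~2. Your two small variations---proving claim~2 for an arbitrary feasible $F$ before specializing to $F_R$, and writing out the quadratic identity $\lambda\mathcal{K}(\rho_1)+(1-\lambda)\mathcal{K}(\rho_2)-\mathcal{K}(\lambda\rho_1+(1-\lambda)\rho_2)=\tfrac{1}{2}\lambda(1-\lambda)B(\rho_1-\rho_2,\rho_1-\rho_2)$ explicitly---are cosmetic and do not alter the argument.
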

\begin{proof}
	The proof again involves computing the appropriate integrals. \\
	For 1 we have: 
	\begin{itemize}
		\item Given $F(\mathbf{x}) \in \mathcal{C}_1$, then
		$\langle F, W^+\rangle = \int_{\Omega} W^+(\mathbf{x}) F(\mathbf{x}) \du \mathbf{x} \geq 0$, 
	since $W^+(\mathbf{x})\geq 0$, and $F(\mathbf{x}) \in \mathcal{C}_1$  is non-negative. 
		\item Given $F(\mathbf{x}) \in \mathcal{C}_2$, then 
		$\langle F, K\rangle = \sum_{\mathbf{k} \in \mathbbm{Z}^{d}} 
		\hat{K}(\mathbf{k}) \hat{F}(\mathbf{k}) \geq 0,$ since\footnote{Here we 
		provide some details justifying the series expansion for 
		$\langle F, K\rangle$. Note that 
		if $K(\mathbf{x})$ satisfies assumption (\ref{Assumption_Abs_cont}), then the
		cosine series converges uniformly.  Hence, for any $\epsilon > 0$, there 
		exists an $M > 0$, such that 
		$\max_{\mathbf{x} \in [0, 1]^d}\| D_M(\mathbf{x}) \| < \epsilon$, where 
		$D_M(\mathbf{x}) := K(\mathbf{x}) - \sum_{|\mathbf{k}| < M} 
		\hat{K}(\mathbf{k})\cos(2\pi \mathbf{k}\cdot \mathbf{x})$. Since $D_M(\mathbf{x})$ 
		is continuous with a maximum norm of $\epsilon$, this implies 
		that $\langle F, D_M\rangle \rightarrow 0$ as $M\rightarrow \infty$. 
		Hence, $\langle F, K\rangle \rightarrow  \sum_{\mathbf{k} \in \mathbbm{Z}^{d}} 
		\hat{K}(\mathbf{k}) \hat{F}(\mathbf{k})$ as $M \rightarrow \infty$. }		
		$\hat{K}(\mathbf{k}) \geq 0$, and 
		$\hat{F}(\mathbf{k}) := \langle F, \cos(2\pi \mathbf{k}\cdot\mathbf{x})\rangle \geq 0$, 
	    for all $\mathbf{k} \in \mathbbm{Z}^d$. 
	\end{itemize}
	Hence $W^+(\mathbf{x})$ and $K(\mathbf{x})$ are in the dual cones to 
	$\mathcal{C}_1$ and $\mathcal{C}_2$ respectively.\\
	For 2: A direct calculations shows:
	\begin{align} \label{DualLowerBound}
		\mathcal{E}_R &= \frac{1}{2}\langle F_R, W \rangle \\
		&=  \frac{1}{2}\Big(\langle F_R, W^+ \rangle 
		+ \langle F_R, K \rangle \Big) + \mathcal{E}_D \geq \mathcal{E}_D \nonumber,
	\end{align}
	The last inequality follows since each pairing independently is non-negative.  Namely 
	$F_R(\mathbf{x})$ is in both $\mathcal{C}_1$ and $\mathcal{C}_2$ and so the result
	in part 1 applies. Hence, both $\langle F_R, W^+\rangle \geq 0$ and 
	$\langle F_R, K\rangle \geq 0$.\\
	For 3: The proof is identical to the proof of property (A1) in Proposition~\ref{Prop_properties_of_A}.\\		
	For 4: Since $K(\mathbf{x})$ satisfies (\ref{Assumption_Abs_cont}), uniform convergence
	of the cosine series allows one to write $\mathcal{K}(\rho)$ using a Plancherel-type identity. 
	Specifically, for any $\rho(\mathbf{x}) \in \mathcal{C}_1$:
	\begin{align*}
		\mathcal{K}(\rho) &= 
		\frac{1}{2}\sum_{\mathbf{k} \in \mathbbm{Z}^d } 
		\hat{K}(\mathbf{k})	\;	\Big(
		\langle \rho, \cos(2\pi \mathbf{k}\cdot \mathbf{x}) \rangle^2 +
		\langle \rho, \sin(2\pi \mathbf{k}\cdot \mathbf{x}) \rangle^2 \Big) + \mathcal{E}_D.
	\end{align*}
	Since $\hat{K}(\mathbf{k})	\geq 0$, the functional $\mathcal{K}(\rho)$ is a positive 
	definite quadratic--and hence convex. Note that in general, numerical observations 
	later show it is often the case that $\hat{K}(\mathbf{k}) = 0$ for some subset of 
	integers $\mathbf{k}$, indicating that $\mathcal{K}(\rho)$ is not strictly convex. 
\end{proof}

	 The dual problem (D) to (R) is then formulated as optimizing 
	 (\ref{ConicDecomposition}) to find the \emph{best possible} constant 
	 $\mathcal{E}_D$ and corresponding decomposition for $W(\mathbf{x})$ into 
	 the sum of a non-negative function and a function with non-negative 
	 cosine modes:
% --------------------------------------------------------------------------- 80	 
\begin{align}
		(D)\quad \quad &\textrm{Maximize  } \mathcal{E}_D, \\ \nonumber
		&\textrm{subject to } \big( W(\mathbf{x}) - 2\mathcal{E}_D \big) 
		\in \textrm{cl}\;\big( \mathcal{C}_1^* + \mathcal{C}_2^*\big). \nonumber
\end{align}
% --------------------------------------------------------------------------- 80
Here $\textrm{cl}$ is the weak$^*$ closure, $\mathcal{C}_{1,2}^*$ are the 
dual cones\footnote{The formulation (D) is over the dual cone $\mathcal{C}^*$, 
which (see Lemma 3.1 in \cite{BurachikJeyakumar2004} for two intersecting 
closed convex cones) is equal to 
$\mathcal{C}^* = (\mathcal{C}_1\cap \mathcal{C}_2)^*
= \textrm{cl}\;\big( \mathcal{C}_1^* + \mathcal{C}_2^*\big)$. } 
to $\mathcal{C}_{1,2}$, where the sum 
$\mathcal{C}_1^* + \mathcal{C}_2^* 
= \{ u + v : u \in \mathcal{C}_1^*, v \in \mathcal{C}_2^*\}$.	
% --------------------------------------------------------------------------- 80	
\begin{assumption}\label{Reg_assumption}(Regularity assumption)
	We assume there exists functions
	$W^+_R(\mathbf{x}) \in \mathcal{C}_1^*$ and 
	$K_R(\mathbf{x}) \in \mathcal{C}_2^*$ that solve (D), and also satisfy the 
	smoothness properties in (D1)--(D2). In other words, $W(\mathbf{x})$ may be 
	written as an optimal decomposition into the dual cones of 
	$\mathcal{C}_1$ and $\mathcal{C}_2$:
	\begin{align}\label{Optimal_Decomposition}
		W(\mathbf{x}) = W_R^+(\mathbf{x})+ K_R(\mathbf{x}) + 2\mathcal{E}_R,
	\end{align}
	where the optimum value of $\mathcal{E}_D$ in (D) is the same as 
	$\mathcal{E}_R$.
\end{assumption}
We refer to the optimal decomposition (\ref{Optimal_Decomposition}) 
of the interaction energy as the \emph{dual decomposition}, as it arises
from the dual formulation of (D) to (R).  At the level of numerical 
discretizations presented in Appendix B, the 
Assumption \ref{Reg_assumption} is justified by the following remark.
	
% --------------------------------------------------------------------------- 80	
\begin{remark}(Numerical justification of Assumption \ref{Reg_assumption})
	Numerical discretizations of (R)
	presented in Appendix B result in a linear 
	program--which therefore has a duality gap of zero.  Hence, every numerical 
	discretization of (R) has the optimal value $\mathcal{E}_R$ equal 
	to the optimal value $\mathcal{E}_D$ in (D).
	Moreover, the finite 
	dimensional cones $\mathcal{C}_{1,h}$ and $\mathcal{C}_{2,h}$ 
	that arise as the discrete approximations to $\mathcal{C}_1$ and $\mathcal{C}_2$
	are closed, self-dual and polyhedral. The sum of two polyhedral cones is also 
	polyhedral and hence closed (Theorem 19.1 and Corollary 19.3.2 in 
	\cite{Rockafellar1970}). Therefore, for any finite discretization, 
	one has: 
	$(\mathcal{C}_{1,h}\cap\mathcal{C}_{2,h})^* 
	= \mathrm{cl}(\mathcal{C}_{1,h}^* + \mathcal{C}_{2,h}^*) 
	= \mathcal{C}_{1,h} + \mathcal{C}_{2,h}$, showing that the dual cone 
	$\mathcal{C}_h^*$ can be written as the sum of the cones 
	$\mathcal{C}_{1,h}$ and $\mathcal{C}_{2,h}$.  This justifies, for any finite
	dimensional discretization, the existence of an optimal dual decomposition 
	of the form (\ref{Optimal_Decomposition}).  
	Note that in general, the sum of two closed, but non-polyhedral cones, may 
	not be closed. For example, for two closed convex cones 
	$\mathcal{C}_1, \mathcal{C}_2 \subset \mathbbm{R}^n$, one may have the 
	pathological situation where a point 
	$\mathbf{x} \in \mathrm{cl}(\mathcal{C}_1 + \mathcal{C}_2)$, however 
	there are no values 
	$\mathbf{y}\in \mathcal{C}_1$, $\mathbf{z} \in \mathcal{C}_2$ such that 
	$\mathbf{x} = \mathbf{y} + \mathbf{z}$.
\end{remark}

With the Assumption \ref{Reg_assumption} on the existence of a dual decomposition, 
the dual problem (D) may be written as a conic optimization
problem with linear constraints:
\begin{align*}
		(D)\quad \quad &\textrm{Maximize  } \mathcal{E}_D, \\ \nonumber
		&\textrm{subject to } \big( W(\mathbf{x}) - 2\mathcal{E}_D - K(\mathbf{x}) \big)  \geq 0, \\
		&\phantom{\textrm{subject to }} \langle K, \cos(2\pi \mathbf{k}\cdot \mathbf{x}) \rangle \geq 0, 
		\quad \langle K, 1 \rangle = 0,\\
		&\phantom{\textrm{subject to }} \langle K, \sin(2\pi \mathbf{k}\cdot \mathbf{x}) \rangle = 0, \\
		&\phantom{\textrm{subject to }} \textrm{for all } \mathbf{x} \in \Omega 
		\textrm{ and } \mathbf{k} \in \mathbbm{Z}^d \setminus \mathbf{0}.
\end{align*}

% --------------------------------------------------------------------------- 80
\begin{remark}(Regularity observation)
	The regularity of the optimal decomposition to (D) is an interesting 
	problem: Numerical solutions in dimension one 
	(see Section~\ref{sec:NumericalResults_d1}) suggest that if $W(x)$ is 
	smooth at $x$, then $W_R^+(x)$ and $K_R(x)$ are not necessarily smooth 
	at $x$ (although continuity of $W_R^+(x)$ and $K_R(x)$ has been 
	observed).
\end{remark}
	
% --------------------------------------------------------------------------- 80	
	
\begin{remark}(Examples of decompositions for $W(\mathbf{x})$)
	Two examples of feasible dual decompositions, i.e., of the form in 
	equation (\ref{ConicDecomposition}), are:
	\begin{enumerate}[leftmargin = 2cm]
		\item [Example 1:] Take 
		$\mathcal{E}_D = \frac{1}{2}\min_{\mathbf{x} \in \Omega} 
		W(\mathbf{x})$, $K(\mathbf{x}) = 0$ 
		and $W^+(\mathbf{x}) := W(\mathbf{x}) - 2\mathcal{E}_D \geq 0$. 
		\item [Example 2:] Write $W(\mathbf{x}) = 
		K_+(\mathbf{x}) + K_{-}(\mathbf{x}) +2\mathcal{E}_D$ 
		as the sum of two functions where $K_{\pm}(\mathbf{x})$ have only 
		$\pm$ cosine coefficients. 
		Take $K(\mathbf{x}) = K_+(\mathbf{x})$ to be the projection of 
		$W(\mathbf{x})$ onto cosine modes with positive coefficients, let 
		$\mathcal{E}_D := \frac{1}{2}\min_{\mathbf{x}\in\Omega} 
		(W(\mathbf{x}) - K(\mathbf{x}))$ 
		and take 
		$W^+(\mathbf{x}) = W(\mathbf{x}) - K(\mathbf{x}) - 2\mathcal{E}_D \geq 0$.
	\end{enumerate}
\end{remark}

% --------------------------------------------------------------------------- 80	
\subsection{Properties of the optimal dual decomposition} 
The purpose of this subsection is to show that the support of $W_R^+(\mathbf{x})$
can be used to identify sets $S_*$ in which the functional $\mathcal{E}(\rho)$ is
convex whenever $\textrm{supp}(\rho) \subseteq S_*$.  Specifically, the conclusion of the
subsection will provide a sufficient condition for a candidate minimizer 
$\rho^*(\mathbf{x})$ to satisfy
the necessary condition given in Remark~\ref{E_Convex_on_Z}. 

We first discuss the support of $W_R^+(\mathbf{x})$ in relation to 
$F_R(\mathbf{x})$.  Revisiting the lower bound (R) and writing 
$W(\mathbf{x})$ using the optimal dual decomposition yields:
\begin{align} \label{DualEnergy}
	\mathcal{E}_R &= \frac{1}{2} \langle W, F_R\rangle 
	= \frac{1}{2} \langle W_R^+, F_R \rangle 
	+ \frac{1}{2}\langle K_R, F_R\rangle + \mathcal{E}_R.
\end{align}
% --------------------------------------------------------------------------- 80
Since both $W_R^+(\mathbf{x}), K_R(\mathbf{x})$ are in the appropriate 
dual cones, the pairings $\langle W_R^+, F_R\rangle \geq 0$ and 
$\langle K_R, F_R \rangle \geq 0$.  
Therefore, (\ref{DualEnergy}) holds only if the integrals vanish 
\begin{align} \label{DualConstraint}
	 \langle W_R^+, F_R\rangle = 0, \quad \quad \langle K_R, F_R\rangle = 0.
\end{align}
Here the constraint (\ref{DualConstraint}) can be used to infer that 
$F_R(\mathbf{x})$ must have a complementary support to 
$W_R^+(\mathbf{x})$ in real space, and $K_R(\mathbf{x})$ in $\mathbf{k}$ space. 
Specifically:
% --------------------------------------------------------------------------- 80
\begin{enumerate}[leftmargin = 1.4cm]
	\item [Case 1:] When $F_R(\mathbf{x}) \in C^0(\Omega)$ is continuous,
	the dual decomposition satisfies
	\begin{align} \label{ContinuousWplusSupport}
		W_R^+(\mathbf{x}) F_R(\mathbf{x}) &= 0, \hspace{10mm} 
		\textrm{for all } \mathbf{x} \in \Omega, \\ \label{ContinuousKSupport}
		\hat{K}_R(\mathbf{k}) \hat{F}_R(\mathbf{k}) 
		&= 0, \hspace{10mm} \textrm{for all } \mathbf{k} \in \mathbbm{Z}^d.
	\end{align}
% --------------------------------------------------------------------------- 80	
	Here $\hat{F}_R(\mathbf{k})$, $\hat{K}_R(\mathbf{k})$ are the cosine
	coefficients defined in the proof of Proposition \ref{Prop_ConicDecomp}.
	\item [Case 2:] When $F_R(\mathbf{x}) 
	= \sum_{\mathbf{r}\in R} f_R(\mathbf{r}) \delta(\mathbf{x} -\mathbf{r})$, 
	is a collection of Dirac masses at the locations 
	$R = \{\mathbf{x}_1, \mathbf{x}_2, \ldots \mathbf{x}_m\}$, with amplitudes
	$f_R(\mathbf{r})$:
% --------------------------------------------------------------------------- 80	
	\begin{align}\label{DiscreteWplusSupport}
		W_R^+(\mathbf{r}) &= 0, \hspace{10mm} 
		\textrm{for all } \mathbf{r} \in R, \\ \label{DiscreteKSupport}
		\hat{K}_R(\mathbf{k}) \hat{F}_R(\mathbf{k}) 
		&= 0,	\hspace{10mm} \textrm{for all } \mathbf{k} \in \mathbbm{Z}^d.
	\end{align}
% --------------------------------------------------------------------------- 80	
	Again $\hat{K}_R(\mathbf{k})$ and $\hat{F}_R(\mathbf{k})$  are the cosine 
	coefficients of $K_R(\mathbf{x})$ and $F_R(\mathbf{x})$, where $F_R(\mathbf{k})$
	can be expressed in terms of $f_R(\mathbf{r})$:
	\[
	\hat{F}_R(\mathbf{k}) = \langle F_R, \cos(2\pi\mathbf{k}\cdot\mathbf{x})\rangle 
	= \sum_{\mathbf{r}\in R} f_R(\mathbf{r}) \cos(2\pi\mathbf{k}\cdot\mathbf{r}).
	\]	
\end{enumerate}
% --------------------------------------------------------------------------- 80
Equation (\ref{ContinuousWplusSupport}) (or the discrete version of the equation (\ref{DiscreteWplusSupport}) )  shows that $W_R^+(\mathbf{x}) = 0$ 
whenever $F_R(\mathbf{x}) \neq 0$, and vise versa.  We now combine this 
observation with the results from Proposition \ref{Prop_ConicDecomp}.  First set:
	\begin{align*}
		\mathcal{E}_R^+(\rho) &:= \frac{1}{2}\int_{\Omega}\int_{\Omega} 
		\rho(\mathbf{x}) \rho(\mathbf{y}) W_R^+(\mathbf{x}-\mathbf{y}) 
		\du \mathbf{x} \du \mathbf{y} = \frac{1}{2}\langle \rho\circ\rho, W_R^+\rangle, \\
		\mathcal{K}_R(\rho) &:= \frac{1}{2}\int_{\Omega}\int_{\Omega} 
		\rho(\mathbf{x}) \rho(\mathbf{y}) K_R(\mathbf{x}-\mathbf{y}) 
		\du \mathbf{x} \du \mathbf{y} + \mathcal{E}_R,
	\end{align*}
where $\mathcal{E}(\rho) = \mathcal{E}_R^+(\rho) + \mathcal{K}_R(\rho)$ is
the functional decomposition for $\mathcal{E}(\rho)$ that arises from
the optimal dual decomposition.  We now arrive at the main observation:
\begin{proposition}\label{Prop_supp}(Sets where $\mathcal{E}(\rho)$ is convex)
	Consider a candidate minimizer $\rho^*(\mathbf{x})$ with support $S_*$, 
	and suppose that the support 
	of $F_{\rho}(\mathbf{x}) = \rho^*\circ \rho^*$ lies in the support of 
	$F_R(\mathbf{x})$, i.e.,
	$\mathrm{supp}(\rho^*\circ\rho^*) \subseteq \mathrm{supp}(F_R)$.  Then 
	$\mathcal{E}(\rho)$ is convex on the space of probabilities having
	support $S_*$. In other words, $\mathcal{E}(\rho)$ is convex when restricted
	to the set 
	\[
	\mathcal{B}_* := \Big\{ \rho(\mathbf{x}) \in \mathcal{C}_1, 
	\int_{\Omega} \rho(\mathbf{x}) \du \mathbf{x} = 1, \; \mathrm{supp}(\rho) \subseteq S_*\Big\}.
	\]	
\end{proposition}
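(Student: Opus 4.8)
The plan is to follow the strategy already laid out in Remark~\ref{E_Convex_on_Z}: since $\mathcal{E}(\rho)$ is a quadratic functional, proving convexity on the affine set $\mathcal{B}_*$ reduces to a single ``second variation'' inequality. Concretely, given any $\rho_1, \rho_2 \in \mathcal{B}_*$, the difference $f := \rho_1 - \rho_2$ satisfies $\int_\Omega f(\mathbf{x})\,\du\mathbf{x} = 0$ and $\mathrm{supp}(f) \subseteq S_*$, and the algebraic identity (\ref{ConvexSets}) shows that convexity holds precisely when $\mathcal{E}(f) \geq 0$. So the entire task is to establish $\mathcal{E}(f) \geq 0$ for every finite signed measure $f(\mathbf{x})$ that is mean-zero and supported in $S_*$.

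First I would substitute the optimal dual decomposition (\ref{Optimal_Decomposition}), $W = W_R^+ + K_R + 2\mathcal{E}_R$, into $\mathcal{E}(f) = \tfrac{1}{2}\langle f\circ f, W\rangle$. Because $\langle f\circ f, 1\rangle = \big(\int_\Omega f\,\du\mathbf{x}\big)^2 = 0$, the constant term drops, leaving
\[
	\mathcal{E}(f) = \tfrac{1}{2}\langle f\circ f, W_R^+\rangle + \tfrac{1}{2}\langle f\circ f, K_R\rangle.
\]
The second term is non-negative by the same Plancherel-type computation used in part~4 of Proposition~\ref{Prop_ConicDecomp}, namely
\[
	\tfrac{1}{2}\langle f\circ f, K_R\rangle = \tfrac{1}{2}\sum_{\mathbf{k}\in\mathbbm{Z}^d} \hat{K}_R(\mathbf{k})\big(\langle f, \cos(2\pi\mathbf{k}\cdot\mathbf{x})\rangle^2 + \langle f, \sin(2\pi\mathbf{k}\cdot\mathbf{x})\rangle^2\big) \geq 0,
\]
since $\hat{K}_R(\mathbf{k}) \geq 0$; note this holds for \emph{any} signed $f$ and uses neither the support hypothesis nor non-negativity.

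The crux is to show the first term vanishes, $\langle f\circ f, W_R^+\rangle = 0$, and this is where the support hypothesis enters through a containment of supports. On one hand, since $f$ is supported in $S_*$, its autocorrelation can be nonzero only on the difference set, so $\mathrm{supp}(f\circ f)\subseteq \overline{S_* - S_*}$, with no restriction on the sign of $f$. On the other hand, because $\rho^*(\mathbf{x})$ is non-negative there is no cancellation in $\rho^*\circ\rho^*$, so every difference $\mathbf{a}-\mathbf{b}$ of support points of $\rho^*$ actually lies in $\mathrm{supp}(\rho^*\circ\rho^*)$; that is, $\overline{S_*-S_*} = \mathrm{supp}(\rho^*\circ\rho^*)$. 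Chaining these facts with the hypothesis $\mathrm{supp}(\rho^*\circ\rho^*)\subseteq\mathrm{supp}(F_R)$ yields
\[
	\mathrm{supp}(f\circ f)\subseteq \overline{S_*-S_*} = \mathrm{supp}(\rho^*\circ\rho^*) \subseteq \mathrm{supp}(F_R).
\]
The complementary-support relation (\ref{ContinuousWplusSupport}) (or (\ref{DiscreteWplusSupport}) in the discrete case), together with continuity of $W_R^+(\mathbf{x})$, gives $W_R^+ \equiv 0$ on $\mathrm{supp}(F_R)$ and hence on $\mathrm{supp}(f\circ f)$. Therefore the integrand $(f\circ f)\,W_R^+$ vanishes identically, so $\langle f\circ f, W_R^+\rangle = 0$ and $\mathcal{E}(f)\geq 0$, which is the desired convexity.

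The hard part, I expect, will be justifying the middle equality $\overline{S_*-S_*} = \mathrm{supp}(\rho^*\circ\rho^*)$, since this is exactly where the non-negativity of $\rho^*$ is indispensable: a signed $\rho^*$ could exhibit cancellations that make its autocorrelation vanish at some $\mathbf{a}-\mathbf{b}$, breaking the inclusion. A secondary technical point, in the continuous case, is to argue carefully that $W_R^+$, being continuous and vanishing on the open set $\{F_R \neq 0\}$, also vanishes on its closed support, so that the pairing $\langle f\circ f, W_R^+\rangle$ is zero even for mass of $f\circ f$ sitting on the boundary of $\mathrm{supp}(F_R)$.
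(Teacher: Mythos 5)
Your proof is correct and is essentially the paper's own argument: both hinge on the dual decomposition $W = W_R^+ + K_R + 2\mathcal{E}_R$, the complementarity conditions (\ref{ContinuousWplusSupport})/(\ref{DiscreteWplusSupport}) forcing $W_R^+$ to vanish on $\mathrm{supp}(F_R)$, and the Plancherel-type positivity of the $K_R$ term from Proposition~\ref{Prop_ConicDecomp}. The only (harmless) repackaging is that you verify the second-variation inequality $\mathcal{E}(f)\geq 0$ for signed differences $f = \rho_1 - \rho_2$ directly via Remark~\ref{E_Convex_on_Z}, whereas the paper shows $\mathcal{E}_R^+(\rho) = 0$ for each $\rho \in \mathcal{B}_*$ and then invokes the global convexity of $\mathcal{K}_R$; along the way you usefully make explicit the ``basic property of the auto-correlation'' the paper merely cites, namely $\mathrm{supp}(f\circ f) \subseteq \overline{S_*-S_*} = \mathrm{supp}(\rho^*\circ\rho^*)$, with the equality being exactly where the non-negativity of $\rho^*$ enters.
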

\begin{proof}
	The proof uses the dual decomposition and the complementary support equations 
	(\ref{ContinuousWplusSupport}) or (\ref{DiscreteWplusSupport}). 
	It will be sufficient to show that for any $\rho(\mathbf{x}) \in \mathcal{B}_*$,
	we have $\mathcal{E}_R^+(\rho) = 0$. This will imply that on the space
	$\mathcal{B}_*$, the functional $\mathcal{E}(\rho) = \mathcal{K}_R(\rho)$ is
	convex.	
	
	Suppose that $\rho(\mathbf{x}) \in \mathcal{B}_*$, then 
	$\mathrm{supp}(\rho\circ\rho) \subseteq \mathrm{supp}(\rho^*\circ\rho^*)$
	by a basic property of the auto-correlation of probabilities. Using the hypothesis
	in Proposition~\ref{Prop_supp}, one then has that 	
	$\mathrm{supp}(\rho\circ\rho)~\subseteq~\mathrm{supp}(F_R)$.  However 
	(\ref{ContinuousWplusSupport}) or (\ref{DiscreteWplusSupport}) guarantees that
	$W_R^+(\mathbf{x}) = 0$ for any $\mathbf{x} \in \mathrm{supp}(F_R)$, and hence 
	$W_R^+(\mathbf{x}) = 0$ for any $\mathbf{x} \in \mathrm{supp}(\rho\circ\rho)$.
	Therefore the integral
	$\mathcal{E}_R^+(\rho) = \frac{1}{2}\langle \rho\circ\rho, W_R^+\rangle = 0$.
\end{proof}

Proposition~\ref{Prop_ConicDecomp} shows that if a recovered minimizer 
$\rho^*(\mathbf{x})$ has an auto-correlation with support 
$\textrm{supp}(F_{\rho})\subseteq \textrm{supp}(F_R)$, then $\rho^*(\mathbf{x})$
satisfies the necessary condition for a candidate minimizer outlined in 
Remark~\ref{E_Convex_on_Z}.
In the subsequent numerical examples, we will observe that the recovery 
procedure outlined in Section~\ref{Sec_KLrecovery} will generate candidate minimizers
$\rho^*(\mathbf{x})$ that often satisfy the hypothesis in Proposition~\ref{Prop_supp}

Finally, we conclude this section with the observation that finding
analytic descriptions for sets $S_*$ in which the energy functional 
$\mathcal{E}(\rho)$, when restricted to $\rho(\mathbf{x})$ with 
$\textrm{supp}(\rho)\subseteq S_*$, is convex is not a simple problem.  The 
importance of the dual decomposition for $W(\mathbf{x})$ is that it is a 
constructive approach that allows one to find such sets $S_*$.  Specifically,
$W_R^+(\mathbf{x})$ and $K_R(\mathbf{x})$ are constructed analytically from 
$W(\mathbf{x})$; and if a set $S_*$ satisfies
the property that $\textrm{supp}(\rho) \subseteq S_* $ implies $\mathcal{E}_R^+(\rho) = 0$,
then $\mathcal{E}(\rho)$ is convex when restricted to probabilities with 
supports in $S_*$.

\section{Results: examples in one dimension} \label{sec:NumericalResults_d1}

\subsection{The Morse potential}\label{Subsec:Periodic_Morse_1d}

In this section we use the convex relaxation and recovery approach to 
generate candidate 
minimizers to the Morse potential on a periodic domain.  The Morse potential 
is a simple example of an attractive-repulsive potential
that has been used recently \cite{BernoffTopaz2013, LeverentzTopazBernoff2009, MogilnerKeshetBentSpiros2003} to model swarms and collective behavior in social 
phenomena.  On $\Omega = \mathbbm{R}$, we write the Morse potential as:
\begin{align} \label{MorsePotential}
	W_M(x) &= -G L e^{-|x|/l_1} + e^{-| x |/l_2}, \quad \quad G, L > 0, 
	\quad x \in \mathbbm{R},
\end{align}
where $L := l_1/l_2$ is a dimensionless quantity; $l_1$ and $l_2$ are 
the length scales associated with an attractive and repulsive 
force respectively; and $G$ denotes the strength of the attractive part of
the potential.
Mathematically, for different strengths of attraction and repulsion,
the Morse potential results in a non-convex energy functional 
$\mathcal{E}(\rho)$.  For computational purposes, we work on a periodic domain. 
To make $W_M(x)$ periodic, we introduce a box size $l_{box}$, and define the 
periodic Morse potential as:
\begin{align}\label{PeriodicMorseSum}
	W_{PM}(x) &= \sum_{n \in \mathbbm{Z}} W_M(x + n l_{box}).
\end{align}
Equation (\ref{PeriodicMorseSum}) can then be summed exactly 
by converting it into a geometric series.  To non-dimensionalize $W_{PM}(x)$, 
we use $l_{box}$ as the length scale, and replace $x \rightarrow x/l_{box}$.  We
also introduce the dimensionless parameter $\sigma := l_2/l_{box}$.  
After summation and non-dimensionalization, the function $W_{PM}(x)$, 
when restricted to one period $0 \leq x \leq 1$, takes the form:
\begin{align}\label{PeriodicMorse}
	W_{PM}(x) &= 
	\frac{-GL}{1 - e^{-1/(L\sigma)}} 
	\Big( e^{-x/(L\sigma)} +  e^{-(1-x)/(L\sigma)} \Big) + 
	\frac{1}{1 - e^{-1/\sigma}} \Big( e^{-x/\sigma} +  e^{-(1-x)/\sigma} \Big) 
	- \overline{W},
\end{align}
Here $\overline{W}$ is a constant\footnote{
$\overline{W} = \frac{-GL}{1 - e^{-1/(L\sigma)}} 
	A + 
	\frac{1}{1 - e^{-1/\sigma}}B$, where $A =  \int_{0}^1 e^{-x/(L\sigma)} +  e^{-(1-x)/(L\sigma)} \du x$, and $B = \int_{0}^{1} e^{-x/\sigma} +  e^{-(1-x)/\sigma} \du x$.}
added for numerical purposes to normalize
$W_{PM}(x)$ to have mean zero (see Property (W4)). 
When the box size $l_{box} \gg l_1, l_2$ is much larger than the 
interaction length scales, the periodic effects of $W_{PM}(x)$ are 
expected to be small, and minimizers of $\mathcal{E}(\rho)$ with
$W_{PM}(x)$ are expected to recover the results of minimizing $W_{M}(x)$
on the infinite line $\mathbbm{R}$.

In the following numerical examples we fix $\sigma = 0.1$, so that $l_{box}$ is 
several times larger than $l_1$ and $l_2$. 
To illustrate the utility of the new approach, we compute the phase
diagram for $W_{PM}(x)$ and characterize the results in the $(L, G)$ 
parameter plane.   This is done by systematically computing
the minimizer $F_R(x)$ and recovered $\rho^*(x)$ for 
every value of $(L, G)$.  We find that the qualitative properties, 
which are characterized by four different regions, A--D, 
in Figure \ref{PhaseDiagramMorse} are in agreement with the ones 
computed in \cite{LeverentzTopazBernoff2009}. In particular, 
the region D corresponds to the blow up
region observed in \cite{LeverentzTopazBernoff2009}. Within this region,
we observe a cascade where minimizers form lattices of Dirac masses--with 
progressively smaller lattice spacings, as $G$ decreases at a fixed value of $L$.

% --------------------------------------------------------------------------- 80
\begin{figure}[htb!]
	\centering
	\includegraphics[width = \textwidth]{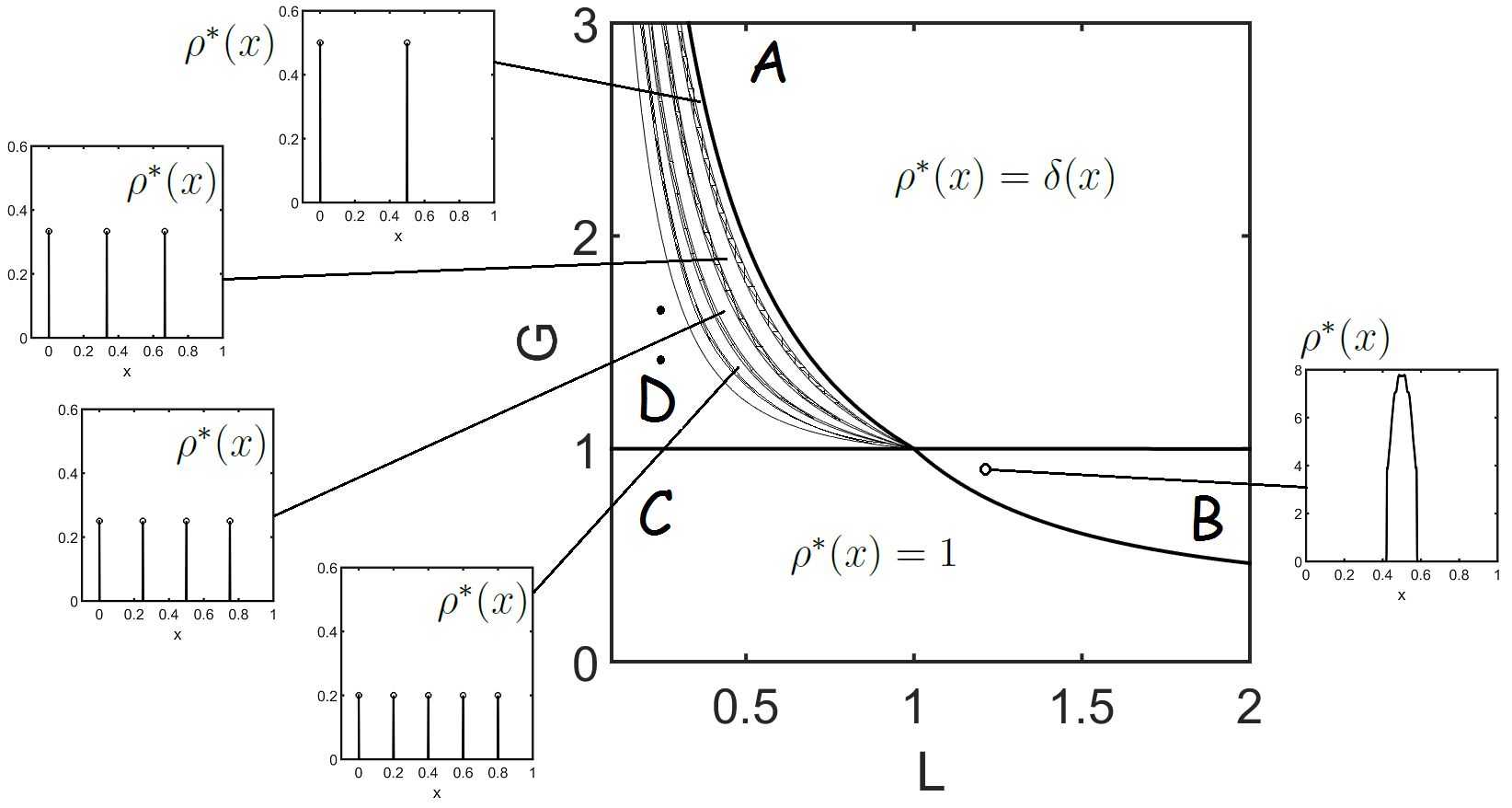}
	\caption{The phase diagram for the periodic Morse potential 
	(\ref{PeriodicMorse}) with $\sigma = 0.1$.  The $(L, G)$ 
	plane is partitioned by thick black lines into 4 regions, denoted by A--D, 
	which show qualitatively different minimizers $\rho^*(x)$ 
	(Note that
	these regions are similar to the ones observed in
	\cite{LeverentzTopazBernoff2009}).
	Region A: The global minimum
	is $\rho^*(x) = \delta(x)$. Region B: Densities $\rho^*(x)$ have a 
	non-zero width whose support is contained strictly inside $[0,1]$; and
	have a continuous auto-correlation $F_R(x)$. Here, one recovered 
	solution is shown for the parameter values $(L, G) = (1.2, 0.9)$. 
	Region C: $\rho^*(x) = 1$, corresponds to an evenly spread 
	probability distribution. Region D: Solutions are a collection of
	Dirac masses, and may form lattices. For instance, the white banded 
	regions show a cascade of lattice minimizers.
	Plotted are lattices with $2, 3, 4$, and $5$ evenly spaced 
	Dirac $\delta(x)$'s, and the number continues to increase
	as $G$ approaches 1.  
	The small transition regions (black shading), between 
	the lattice regions, may contain (possibly 
	infinitely) many different solutions.  Minimizers in regions A, C, 
	as well as the lattice solutions in D are exact global minimizers with 
	$\alpha = 1$ (See Remark \ref{Rmk:lattic_exact} and 
	Appendix A).}
	\label{PhaseDiagramMorse}
\end{figure}

Figures \ref{ResultsPeriodicMorse_Cvg}--\ref{ResultsPeriodicMorse} show explicit 
results for a fixed value of $(L, G) = (1.2, 0.9)$, that lies in the region 
where $F_R(x)$ is a continuous function.  Figure 
\ref{ResultsPeriodicMorse_Cvg} demonstrates the convergence of the 
Schulz-Snyder algorithm, while Figure \ref{ResultsPeriodicMorse_DualDecomposition}
shows the optimal dual decomposition for $W_{PM}(x)$.

Figure \ref{ResultsPeriodicMorse} together shows the recovered minimizer $\rho^*(x)$
(with a guarantee $\alpha = 0.99$),
along with the solutions to (R) and (D), i.e., $F_R(x), W_R^+(x), K_R(x)$.  The purpose of
showing both the solutions to (R) and (D) is to highlight the 
complimentarity conditions (\ref{ContinuousWplusSupport})--(\ref{ContinuousKSupport}).
Specifically, Subfigure \ref{ResultsPeriodicMorse}a shows $\rho^*(x)$
to have a support $S_*$ with length $|S_*| = 0.161$, which is consistent with
the histogram width observed in the particle simulations in Figure \ref{DiscreteParticles}. 
The auto-correlation
$F_{\rho}(x)$ also has a support $\textrm{supp}(F_{\rho}) = \textrm{supp}(F_R)$, and 
therefore is complementary to $W_R^+(x)$, i.e. $W_R^+(x) F_{\rho}(x) = 0$ for all 
$x\in \Omega$ (See Subfigure \ref{ResultsPeriodicMorse}b).  Hence, $\rho^*(x)$ 
satisfies the hypothesis in Proposition \ref{Prop_supp}, which implies that
$\mathcal{E}(\rho)$ is convex when restricted to probabilities having support
with a width of $\sim 0.161$.  Subfigure \ref{ResultsPeriodicMorse}c, shows
the complimentarity condition (\ref{ContinuousKSupport}).
Finally, we note that the size $|S_*|$ emerges as a new length 
scale for the particle density, and is 
exactly $1/2$ of the length where $W_R^+(x) = 0$. 

% --------------------------------------------------------------------------- 80

\begin{SCfigure}[5]
	\includegraphics[width = 0.4\textwidth]{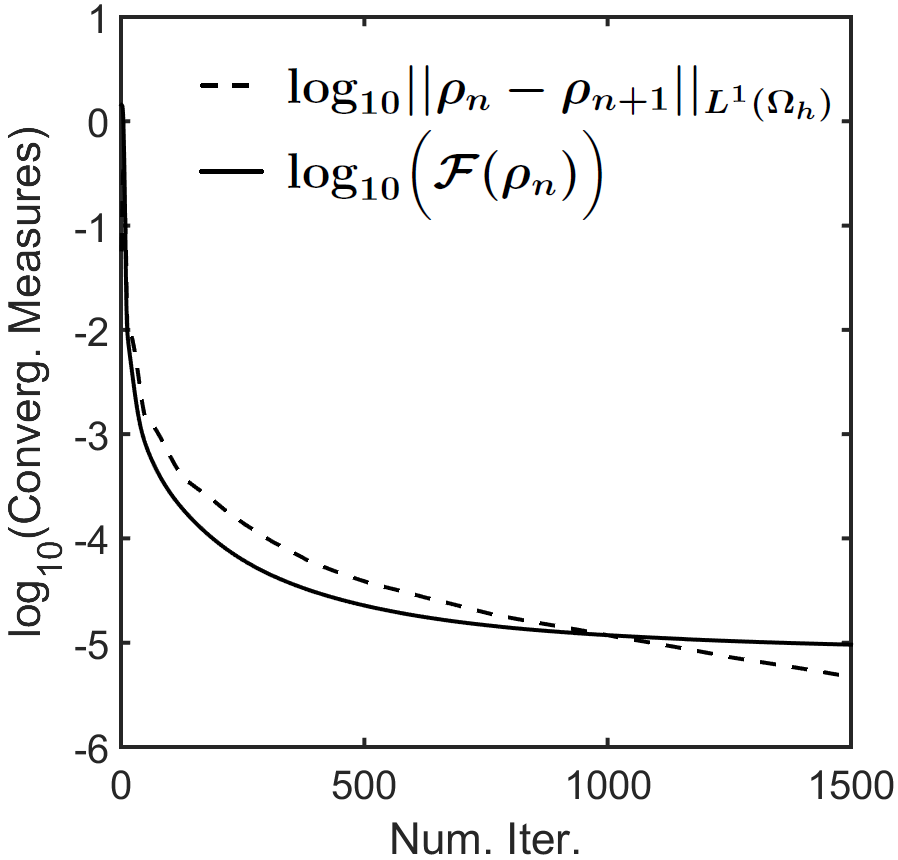} 
     \caption{
     Convergence versus iteration for \\
	 $\log_{10}||\rho_{n}-\rho_{n+1}||_{L^1(\Omega_h)}$ 
     (dashed curve), and \\
     $\log_{10}\big(\mathcal{F}(\rho_{n}) \big)\rightarrow  7.98 \times 10^{-6}$
     (solid line) in 
     the Schulz-Snyder algorithm. The quantities are
     for the periodic Morse potential (\ref{PeriodicMorse}) with $\sigma = 0.1$, 
     $(L, G)=(1.2, 0.9)$, and grid $n = 800$. 
 	 The non-zero value of $7.98 \times 10^{-6}$ is the result of a mismatch 
	 between the converged $F_{\rho^*}(x)$, and target $F_R(x)$,
	 and may be due to: (i) round-off or tolerance errors introduced
	 into the numerical discretizations; or (ii) a fundamental 
	 limitation that for the analytic solution $F_R(x)$ at hand,
	 there may not exist a $\rho^*(x)$ that 
	 exactly satisfies 
	 the sufficient conditions in Remark~\ref{Rmk:eqv_suff_cond}.  } 
     \label{ResultsPeriodicMorse_Cvg}
\end{SCfigure}

\begin{figure}[htb!]
 \includegraphics[width = \textwidth]{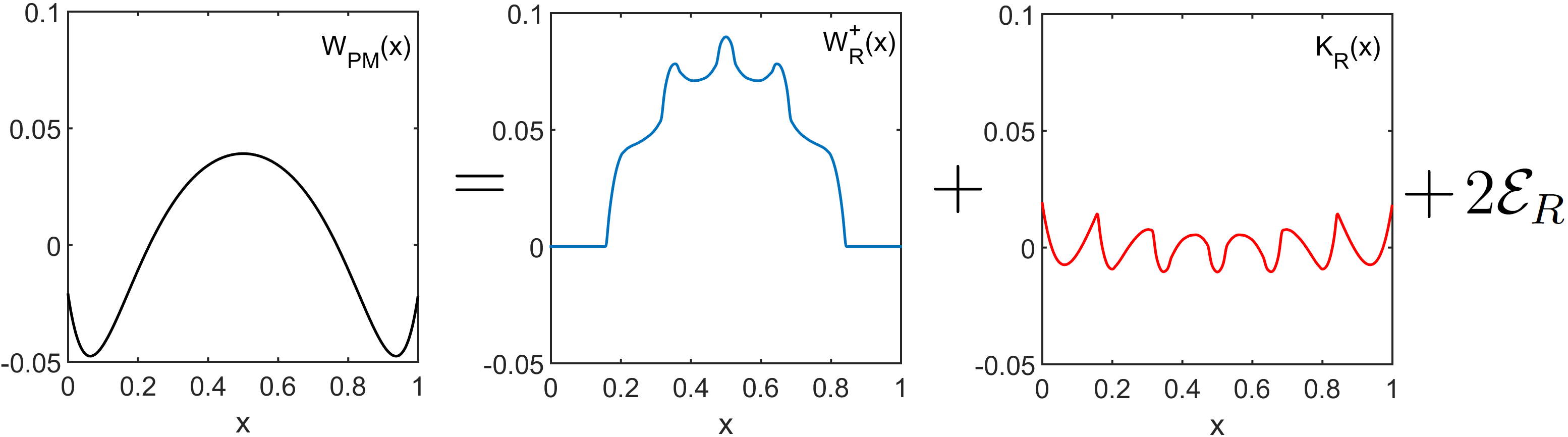}
  \caption{The figure shows the optimal
     dual decomposition (\ref{Optimal_Decomposition}) for the periodic
     Morse potential $W_{PM}(x)$ defined in (\ref{PeriodicMorse}).  Here the
     parameters are $\sigma = 0.1$, $(L, G)=(1.2, 0.9)$, and grid $n = 800$.  
     Note that the cosine coefficients
     of $K_R(x)$ are non-negative, while $\mathcal{E}_R$ is the largest possible
     constant as described by the solution to (D).  } 
     \label{ResultsPeriodicMorse_DualDecomposition}
\end{figure}

% --------------------------------------------------------------------------- 80
\begin{figure}[htb!]
	\centering
    \subfloat[(a) $\rho^*(x)$, $\Lambda(x)$.]
    { \includegraphics[width = 0.317\textwidth]{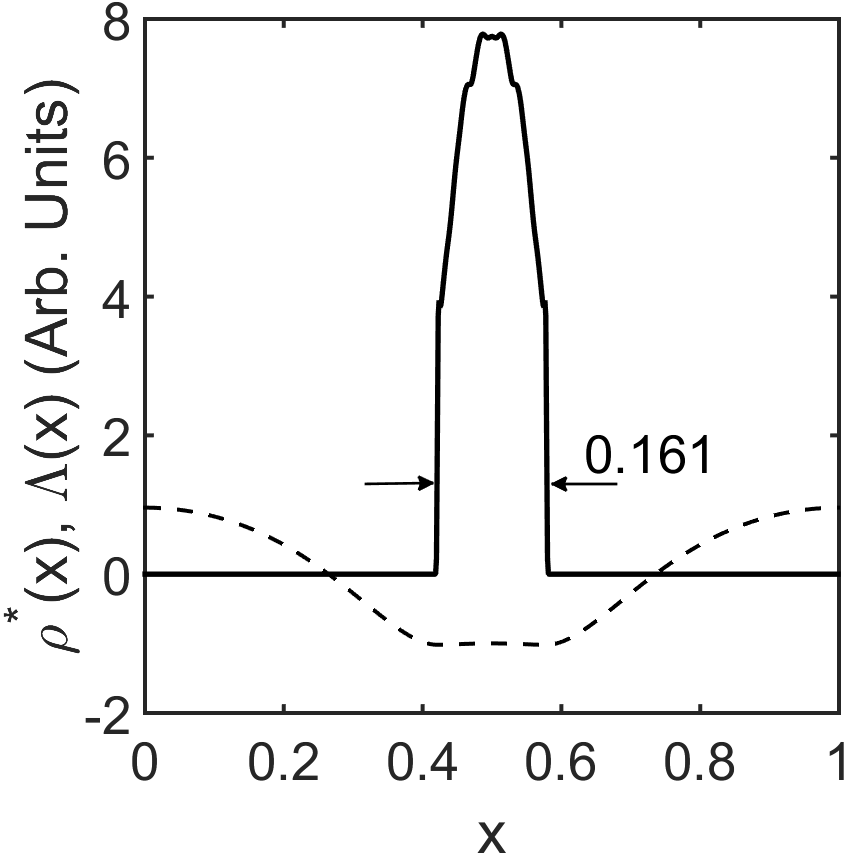} }    
	\subfloat[(b) $F_R(x)$, $F_{\rho^*}(x), W_R^+(x)$.]{ \includegraphics[width = 0.317\textwidth]{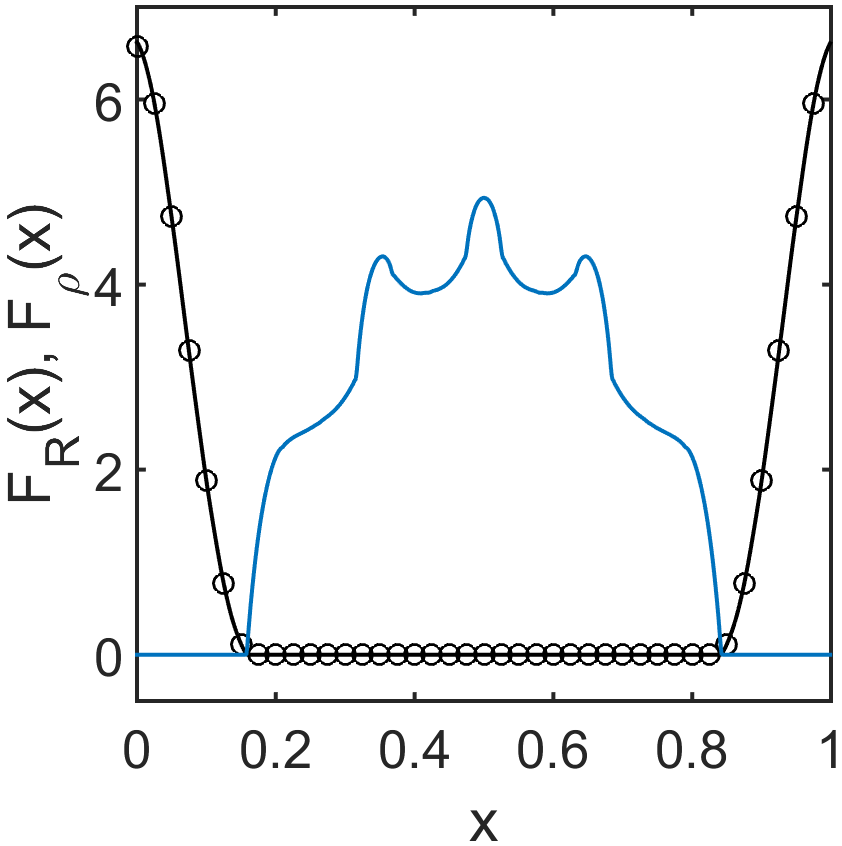} }
	\subfloat[(c) $\hat{\rho}^*(k), \hat{F}_R(k), \hat{K}_R(k)$.]{ \includegraphics[width = 0.32\textwidth]{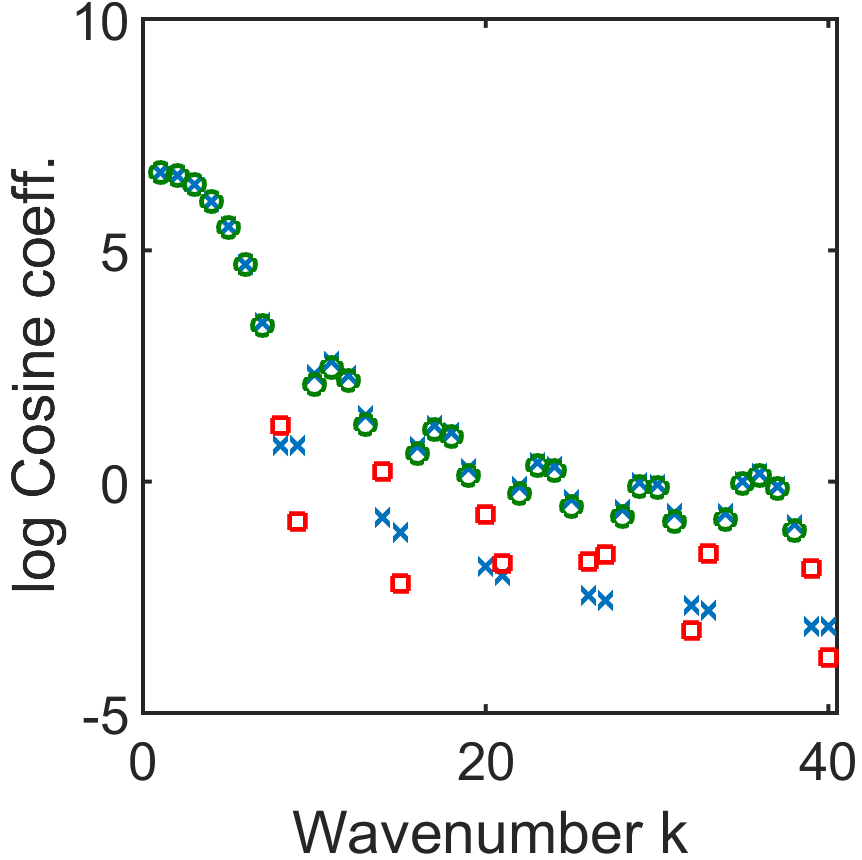} }\\
	 \caption{Example for the 
     Morse potential (\ref{PeriodicMorse}) with $\sigma = 0.1$, 
     $(L, G)=(1.2, 0.9)$, and grid $n = 800$. Figure
     (a) shows the recovered minimizer $\rho^*(x)$ (solid curve), which has a 
     guarantee of $\alpha = 0.99$, along with
     $\Lambda(x)$ (dashed curve, arbitrary units). The width of the support
     of $\rho^*(x)$ is $\sim 0.161$. Figure (b) shows the auto-correlation
     $F_{\rho^*}(x)$ (solid line), target auto-correlation 
     $F_R(x)$ (circles), and a re-scaled $W_R^+(x)$ (blue curve). Here 
     $W_R^+(x)$ is drawn to show that $F_{\rho}(x)$ and $W_R^+(x)$ have 
     complementary supports, i.e. $W_R^+(x)F_{\rho}(x) = 0$.
     This implies that $\rho^*(x)$ satisfies the hypothesis
     in Proposition \ref{Prop_supp}, and therefore $\mathcal{E}(\rho)$ is convex
     when restricted to probabilities with a width $\sim 0.161$. 
     Figure (c) shows that the cosine coefficients 	      
     (coefficients not plotted are numerically zero) 
     $\hat{F}_R(k)$ (green circles), and 
     $\hat{K}_R(k)$ (red squares), 
	 have complementary support for different values of $k$, i.e.,
	 $\hat{F}_R(k) \hat{K}_R(k) = 0$. Here the cosine coefficients
     $\hat{F}_{\rho^*}(k)$ (blue crosses) of the recovered solution
     are shown for reference.
     } 
     \label{ResultsPeriodicMorse}
\end{figure}

% --------------------------------------------------------------------------- 80
\subsection{A local potential}
In the context of social interactions, recent work \cite{MotschTadmor2014} has 
focused on a class of local interaction potentials where $W(x)$ has 
compact support.  In this section we examine the approximate global minimizers 
and dual decomposition for a continuous periodic version of the local potential 
examined in \cite{MotschTadmor2014}:
\begin{align} \label{localpotential}
	\psi(x) &= \left\{
\begin{array}{ll}
0.1, & |x| \leq \frac{1}{2}, \\
9|x|-4.4, & \frac{1}{2} < |x| \leq \frac{3}{5}, \\
1, & \frac{3}{5} < |x| \leq \frac{9}{10}, \\
10-10|x|, & \frac{9}{10} < |x| \leq 1, \\
0, &|x| > 1, 
\end{array} \right. \quad \quad \textrm{for } x \in \mathbbm{R}, \\ \nonumber
	W(x) &= \sum_{n \in \mathbbm{Z}}\Big( \psi\Big( \frac{x + n}{l_c}\Big) - \overline{\psi}\Big).
\end{align}
% --------------------------------------------------------------------------- 80
Figure \ref{LocalPotential}c shows the potential $\psi(x)$, which differs 
primarily from the one in \cite{MotschTadmor2014} by replacing the discontinuous 
jumps (at range values $0.1$, $1$ and $0$) by linear interpolation. The 
quantity $l_c > 0$ enters as the (dimensionless) ratio of the local interaction 
length to periodic domain length, with $W(x)$ entering in as a full periodic 
potential. One might expect in the limit $l_c \ll 1$ to recover the 
characteristics of the non-periodic model.  For $l_c = 0.1$, which is 
commensurate with the periodic domain length, and $n = 360$ grid points, one 
recovers the auto-correlation with 10 equispaced Dirac masses
% --------------------------------------------------------------------------- 80
\begin{align}
	F_R(x) &= \sum_{s \in S} f_R(s) \delta(x - s), \\ \nonumber
	f_R(s) &= \frac{1}{10}, \textrm{ where } S 
	= \Big\{0, \frac{1}{10}, \frac{2}{10}, \ldots, \frac{9}{10} \Big\}.
\end{align}
% --------------------------------------------------------------------------- 80
Since $F_R(x) = F_R\circ F_R$, letting $\rho^*(x) = F_R(x)$ recovers the 
exact auto-correlation and hence is a global minimizer with guarantee 
$\alpha = 1$.   Figure \ref{LocalPotential}a shows the dual decomposition of 
$W(x) = W_R^+(x) + K_R(x) + 2\mathcal{E}_R$. Numerically, it is observed that 
both $W_R^+(x)$ and $K_R(x)$ are constant in regions where the local potential
$W(x) = 0$, so that they too are 
effectively local potentials.  As a final remark, if $l_c$ is not taken as an 
integer fraction of the domain length, or the grid spacing $h = 1/n$ (see Appendix B), is not 
commensurate with the spacing of the Dirac masses, one may have non-lattice
minimizers that become sensitive to the number of grid points $n$, and tolerance
chosen in the numerical optimization routine. 

% --------------------------------------------------------------------------- 80

\subsection{A regularized power law potential}
Power law potentials are often used in models of social dynamics. Here we 
illustrate the approach for a regularized power law potential on a periodic 
domain. Set
\begin{align} \nonumber
	W_{p}(x) &= x^{-0.4} - \frac{1}{3.5}x^{-0.2} - \overline{W}, 
	\\ \label{RegPowerlaw}
	W(x) &= W_p( x + \epsilon) + W_p(1-x + \epsilon), \hspace{10mm} 
	\textrm{for } x \in [0, 1], \textrm{ extended periodically.}
\end{align}
% --------------------------------------------------------------------------- 80
The exponents $-0.4, -0.2$ and parameter $3.5$ are chosen arbitrarily. 
The parameter $\epsilon = 0.01$ is taken to regularize the 
discontinuity at $x = 0$.  Without the regularization, the value $W(0)$ becomes 
undefined and the optimization routine in Appendix B must be 
modified to obtain a convergent minimizer to (R). As a note, the shape of the 
potential is somewhat sensitive to the parameters $\epsilon$ and deviations 
from the constant $3.5$. The candidate minimizer $\rho^*(x)$ is shown in 
Figure~\ref{ResultsPwrlaw} and has a guarantee $\alpha = 0.988$. 

% --------------------------------------------------------------------------- 80

\begin{figure}[htb!] 
	\centering
	 \subfloat[(a) $W_R^+(x)$, $K_R(x)$.]
	 {\includegraphics[width = 0.33\textwidth]{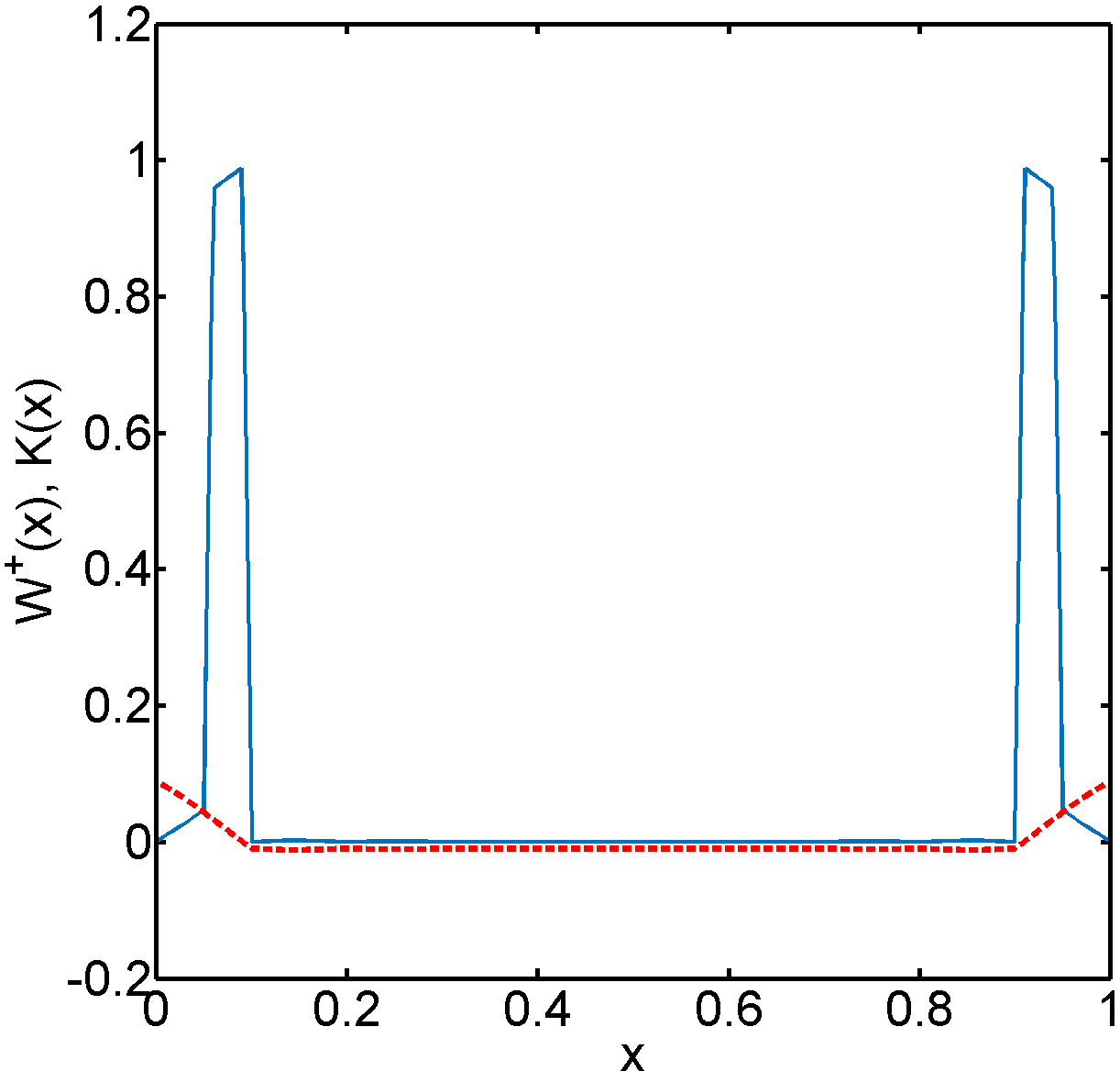}}
    \subfloat[(b) $F_R(x), F_{\rho^*}(x), \rho^*(x)$.]
    {\includegraphics[width = 0.338\textwidth]{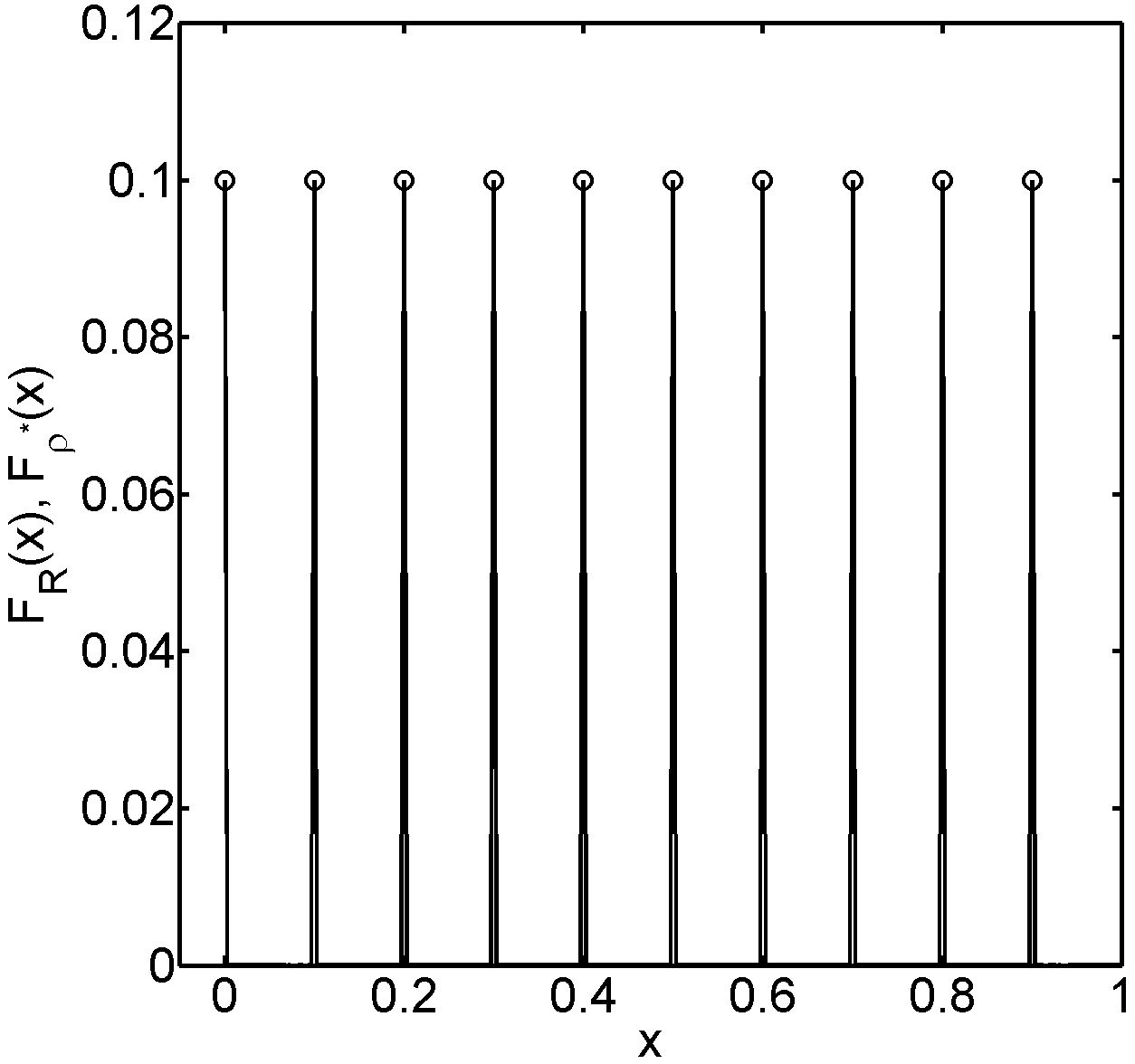}}
    \subfloat[(c) $\psi(x)$.]
    {\includegraphics[width = 0.32\textwidth]{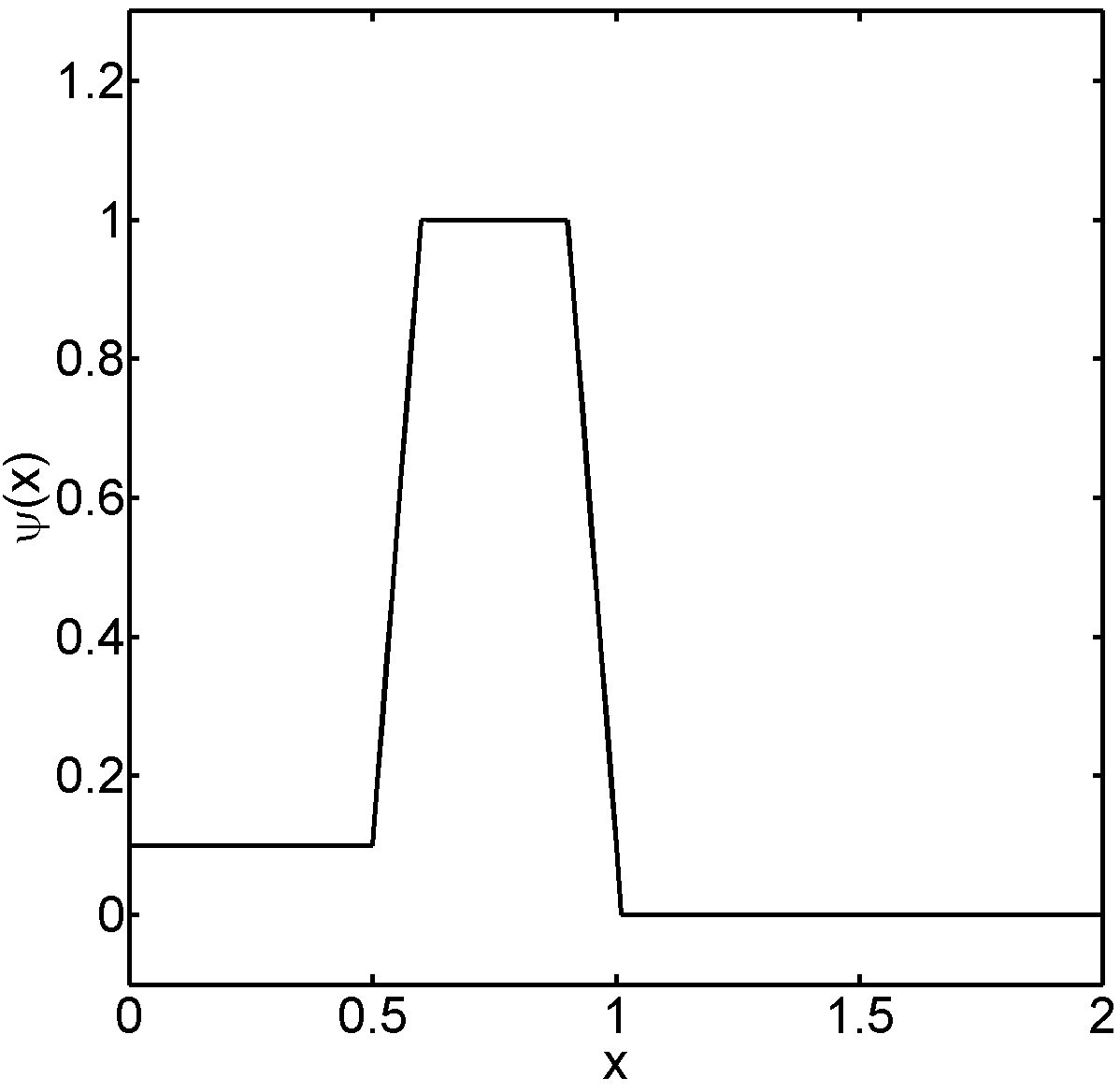}}
    \caption{Results for local potential (\ref{localpotential}), $n = 360$ grid.  
    (a) Optimal dual decomposition for $W(x)$ into $W_R^+(x)$ (blue curve), 
    and $K_R(x)$ (red curve). (b) $F_R(x)$ (circles), $F_{\rho^*}(x)=\rho_*(x)$ 
    (solid lines). The recovery is exact with $\alpha = 1$. 
    (c) Local interaction potential $\psi(x)$.}
 \label{LocalPotential}
\end{figure}

% --------------------------------------------------------------------------- 80
\begin{figure}[htb!] 
	\centering
	\subfloat[(a) $W_R^+(x)$, $K_R(x)$, $W(x)$.]
	{\includegraphics[width = 0.33\textwidth]{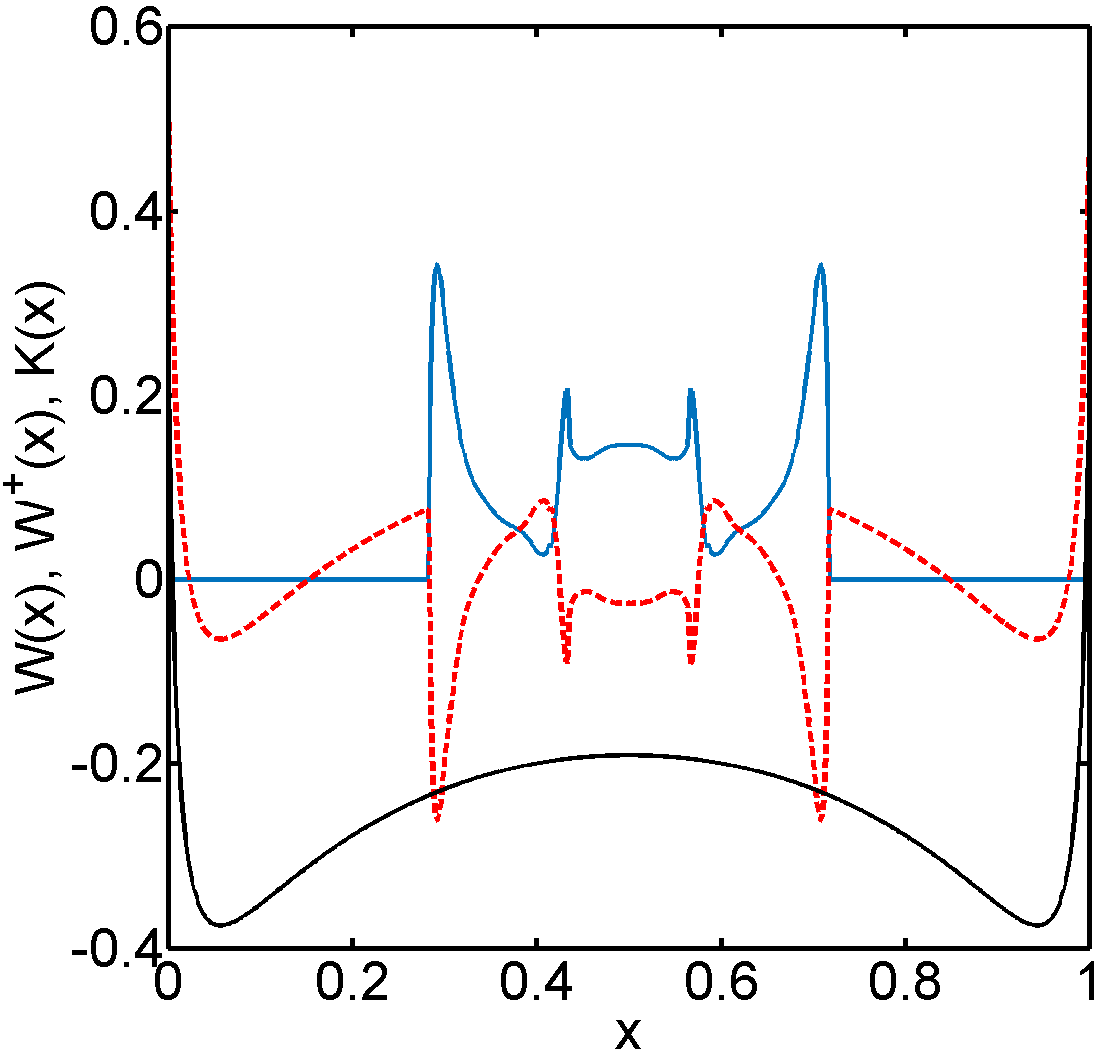}}
    \subfloat[(b) $\rho^*(x)$, $\Lambda(x)$.]
    {\includegraphics[width = 0.315\textwidth]{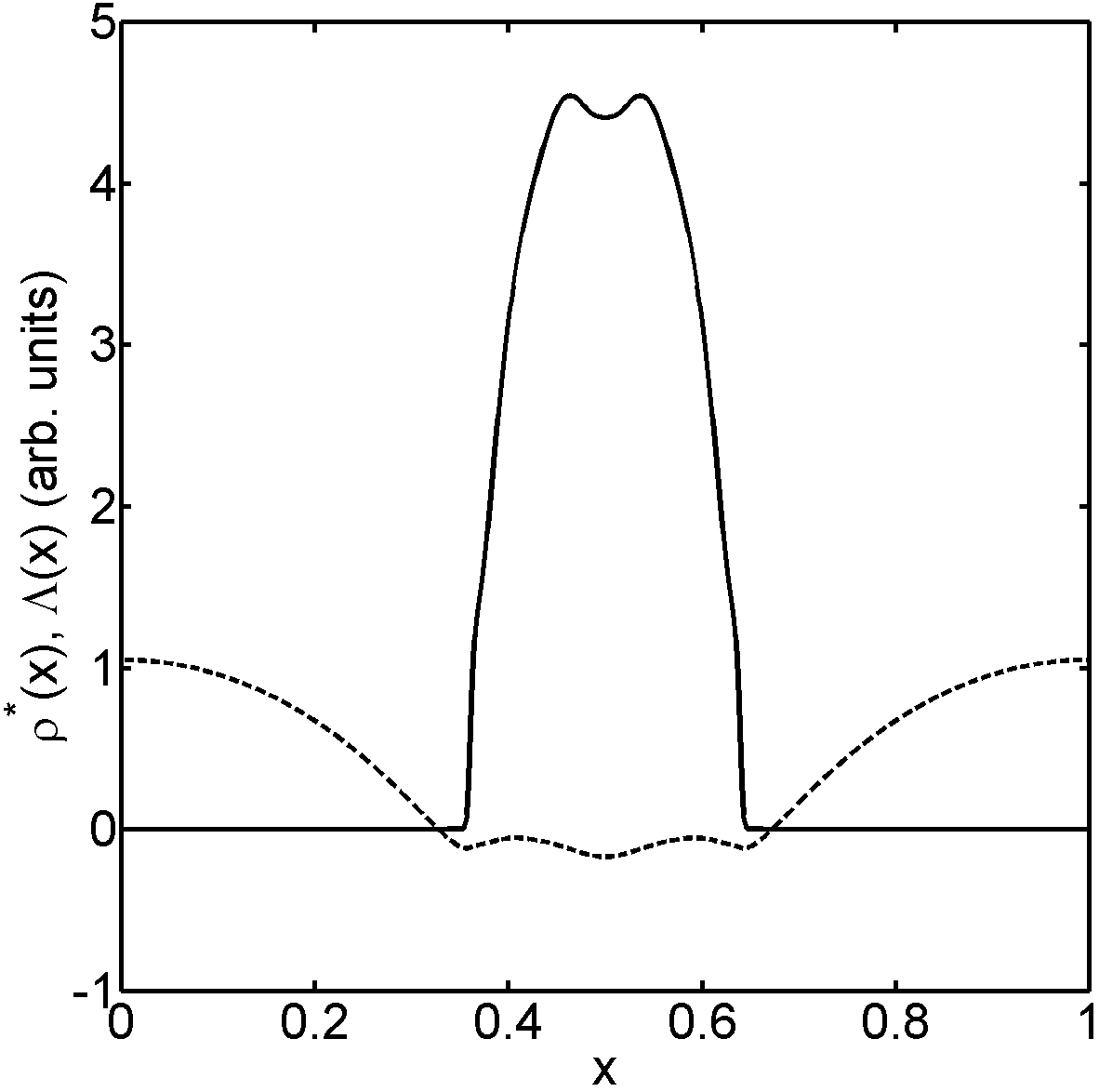}} 
    \subfloat[(c) $F_R(x)$, $F_{\rho^*}(x), W_R^+(x)$.]
    {\includegraphics[width = 0.333\textwidth]{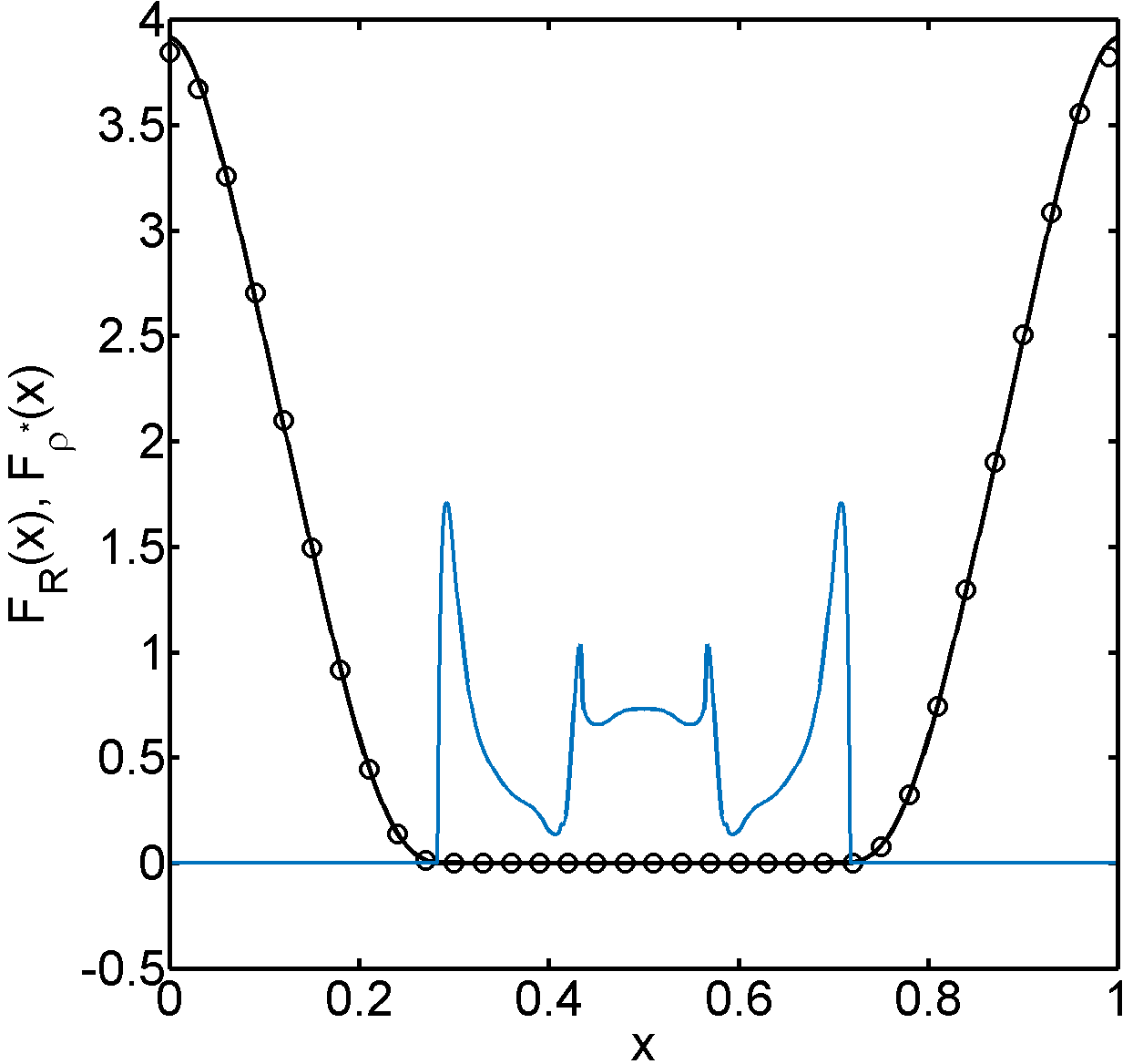}}
    \caption{Results for regularized power law potential (\ref{RegPowerlaw}), 
    $n = 1000$ grid.  (a) Optimal dual decomposition for $W(x)$ (black curve) 
    into $W_R^+(x)$ (blue curve), and $K_R(x)$ (red curve). (b) $\rho^*(x)$ 
    (solid) with guarantee $\alpha = 0.988$, re-scaled $\Lambda(x)$ (dashed) 
    with arbitrary units. (c) Auto-correlation $F_R(x)$ (dots), and 
    $F_{\rho^*}(x)$ (solid). Here $W_R^+(x)$ is plotted (blue curve, 
    arbitrary units) to
    show that $F_{\rho}(x)W_R^+(x) = 0$, thereby implying that $\rho^*(x)$ 
    satisfies the hypothesis in Proposition \ref{Prop_supp}. }
 \label{ResultsPwrlaw}
\end{figure}

% --------------------------------------------------------------------------- 80
\begin{figure}[htb!] 
	\centering
	\subfloat[(a) $W_R^+(x)$, $K_R(x)$, $W(x)$.]
	{\includegraphics[width = 0.33\textwidth]{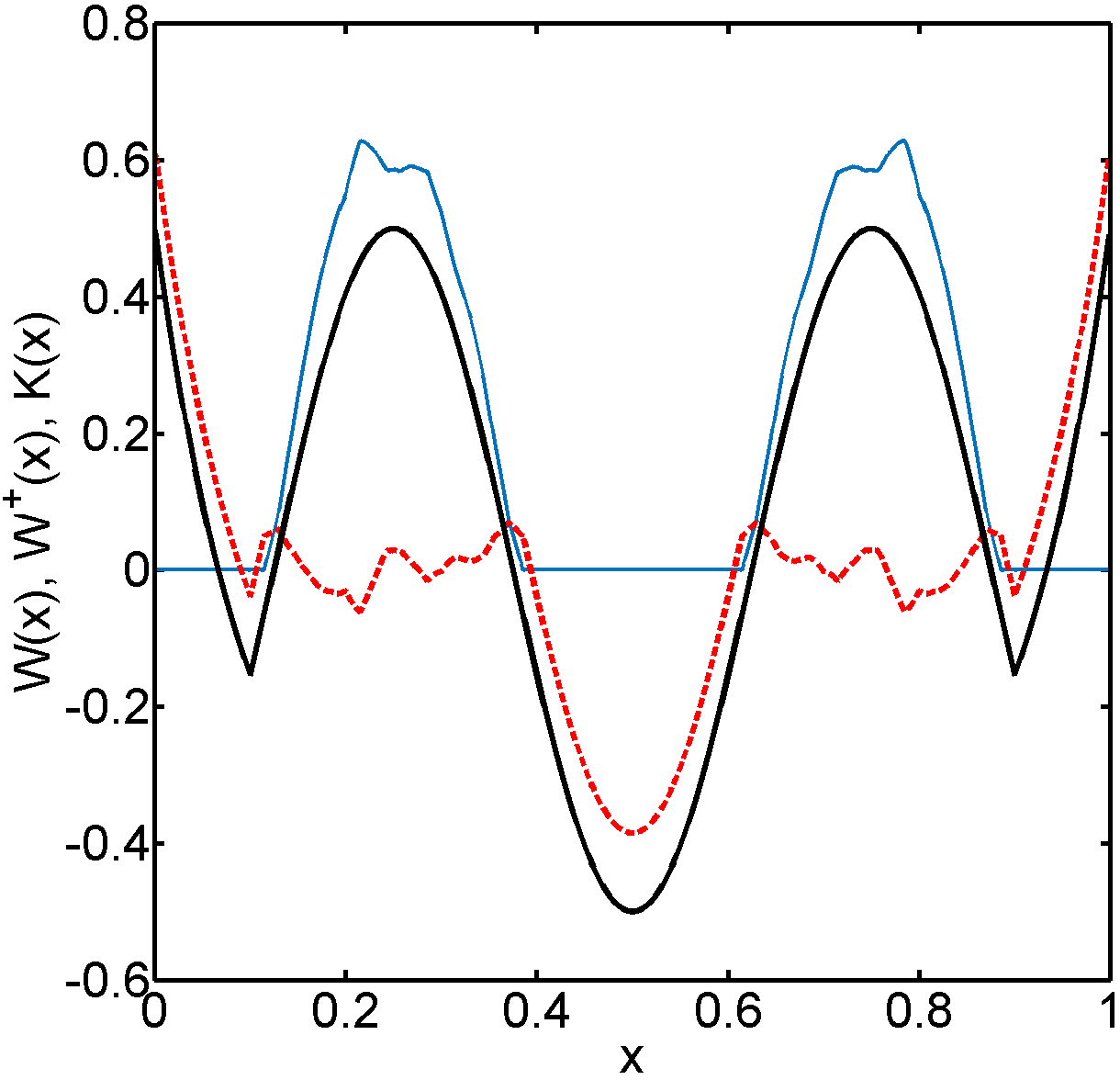}}
    \subfloat[(b) $\rho^*(x)$, $\Lambda(x)$.]
    {\includegraphics[width = 0.323\textwidth]{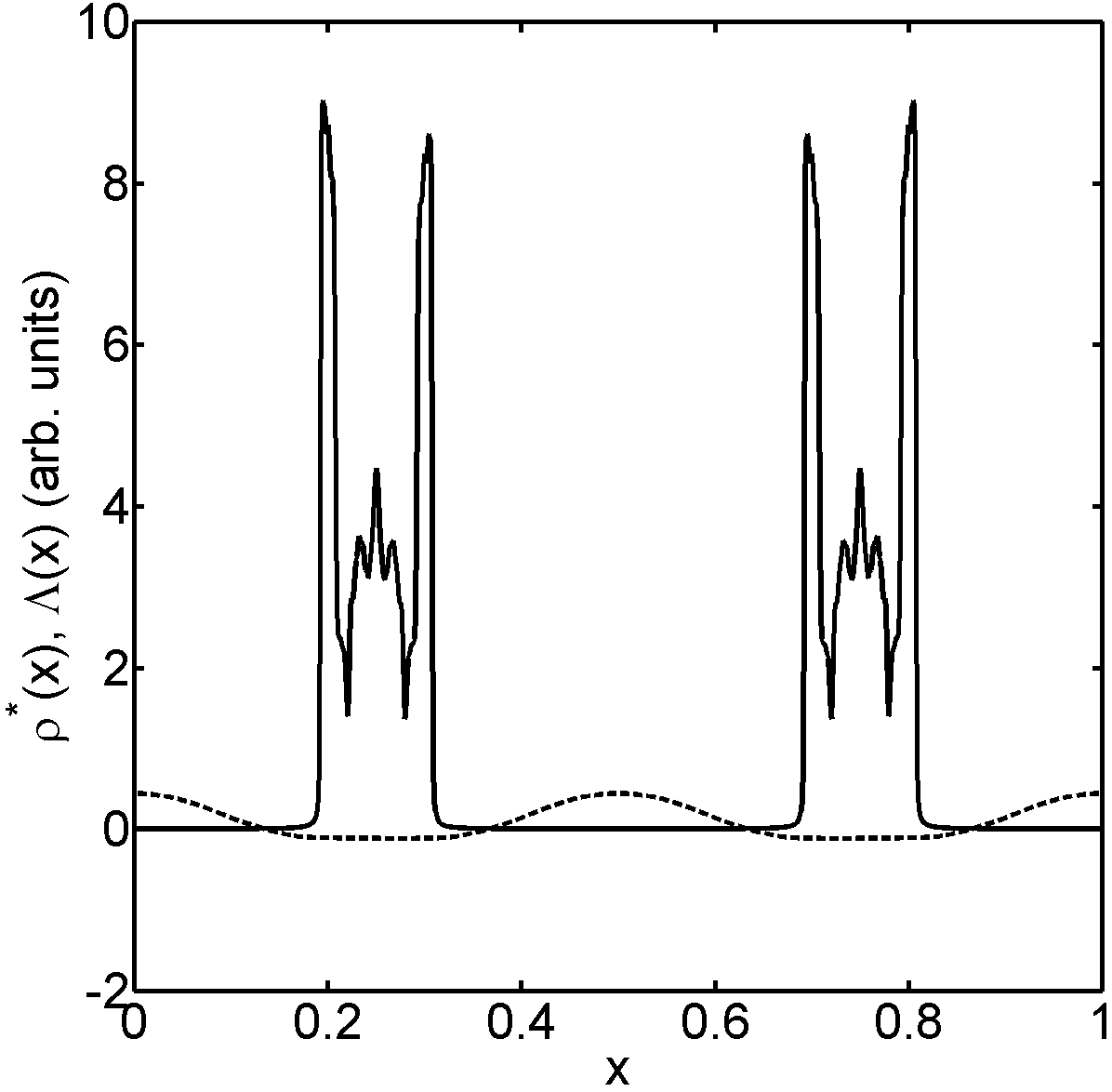} }
    \subfloat[(c) $F_R(x)$, $F_{\rho^*}(x), W_R^+(x)$.]
    {\includegraphics[width = 0.314\textwidth]{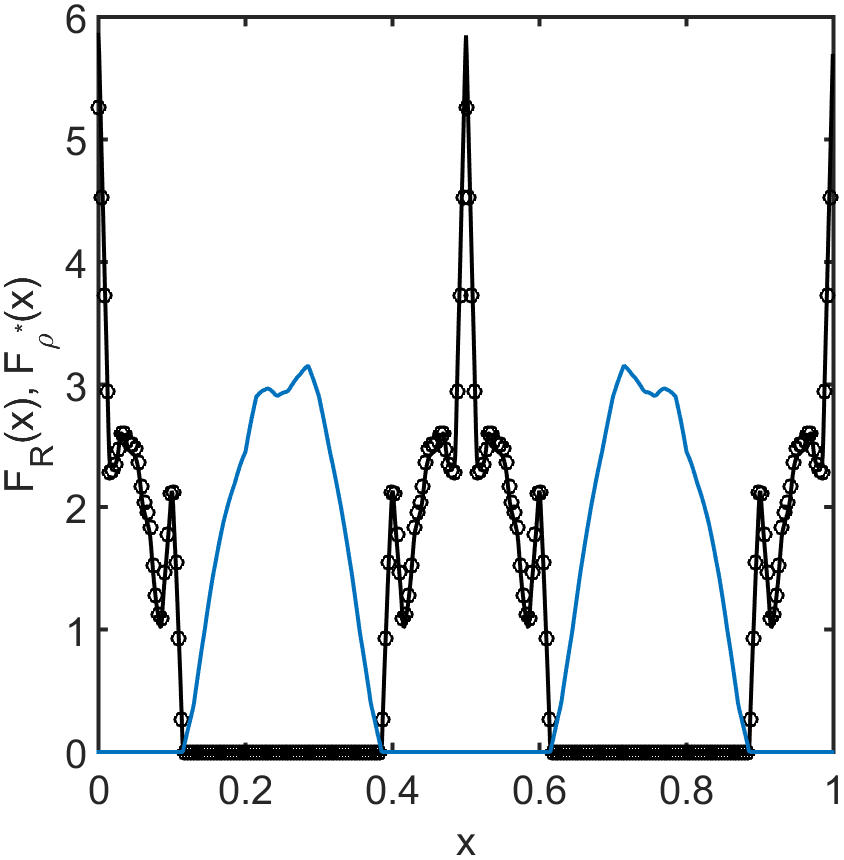} }
    \caption{Results for potential with multiple length scales 
    (\ref{MultiscalePot}), $n = 1024$ grid. (a) Optimal dual decomposition for 
    $W(x)$ (black curve) into $W_R^+(x)$ (blue curve), and $K_R(x)$ (red curve). 
    (b) $\rho^*(x)$ (solid) with guarantee $\alpha = 0.988$, rescaled $\Lambda(x)$ 
    (dashed) with arbitrary units. (c) Auto-correlation $F_R(x)$ (dots), and 
    $F_{\rho^*}(x)$ (solid). Here $W_R^+(x)$ is plotted (blue curve, 
    arbitrary units) to
    show that $F_{\rho}(x)W_R^+(x) = 0$, thereby implying that $\rho^*(x)$ 
    satisfies the hypothesis in Proposition \ref{Prop_supp}.  }
 \label{Results_Multilen_scale}
\end{figure}

% --------------------------------------------------------------------------- 80

\subsection{A potential with multiple length scales}
Another interesting example occurs for potentials that promote several length 
scales by having multiple local minima in $W(x)$. As an example, take
\begin{align} \nonumber
	W_t(x) &= \max\{ 1- x, 0 \} - \overline{W}, \hspace{31.3mm}  
	\textrm{for } x \in [0, 1],\\ \label{MultiscalePot}
	W(x) &= W_t\Big(\frac{x}{10} \Big) + W_t\Big( \frac{1-x}{10}\Big) 
	- \frac{1}{2}\cos(4\pi x),\hspace{3mm} \textrm{for } x \in [0, 1], 
	\textrm{ extended periodically.}
\end{align}
Here $W_t(x)$ is a repulsive triangle potential which has non-negative 
cosine modes. The $\cos(4\pi x)$ term is added to make $\mathcal{E}(\rho)$ non-convex.  
We find a candidate minimizer $\rho^*(x)$ with a guarantee $\alpha = 0.988$ 
(see Figure \ref{Results_Multilen_scale}). The dual decomposition 
solution found in Figure \ref{Results_Multilen_scale}a also highlights the fact that 
$K_R(x)$ and $W_R^+(x)$ are not in general smooth.

% --------------------------------------------------------------------------- 80
\begin{remark}(Minimizers with disconnected supports)
	Other works, such as \cite{BernoffTopaz2013} have been successful in 
	characterizing global minimizers under the assumption that $\rho^*(x)$ has 
	connected support.  The recovery process for potential (\ref{MultiscalePot}) 
	yields an $F_R(x)$ with disconnected support, thereby resulting in $\rho^*(x)$ 
	with multiply connected supports. 
\end{remark}

% --------------------------------------------------------------------------- 80

\section{Results: examples in two dimensions} \label{sec:NumericalResults_d2}

The purpose of this section is to solve the relaxation (R), and compute 
minimizers in some examples with two spatial dimensions. 
The examples will also highlight several difficulties and drawbacks that
become more significant in higher dimensions. 
Specifically, due to the enlarged set of constraints encountered in two
dimensions, the numerical 
solution using MATLAB's solver become slow, and motivate the need for 
more efficient numerical schemes.  

Here, we focus on an attractive-repulsive
potential that shares some similarity to the periodic Morse potential: 
\begin{align} \label{PeriodicMorse2d}
	W(x,y) &= -G L e^{-\frac{1}{L} (|\sin(\pi x) | + |\sin(\pi y) |)} 
	+ e^{-(|\sin(\pi x)| + |\sin(\pi y) |)} - \overline{W}, 
	\quad \quad G, L > 0, \quad x,y \in \mathbbm{R}.
\end{align}
As a result of the similar parameterization to $W_{PM}(x)$, 
we may expect minimizers with the potential (\ref{PeriodicMorse2d}) 
for different $G$ and $L$ values to have qualitatively similar 
behavior to those described in the phase diagram in 
Figure \ref{PhaseDiagramMorse}.
% --------------------------------------------------------------------------- 80
For different fixed values of $(L, G)$, we solve (R) for $F_R(\mathbf{x})$,
followed by performing the recovery procedure outline in Section \ref{Sec_KLrecovery}.

\subsection{Solutions $F_R(x)$ to (R) that are continuous}
Using values of $(L,G) =(1.5, 0.9)$ in (\ref{PeriodicMorse2d}), we obtained 
a solution $F_R(\mathbf{x})$ that is continuous, as seen in Figure \ref{Contour2D}a. 
In the numerical solution, we were limited to a coarse $40 \times 40$ grid due 
to the increased solution times required by MATLAB's solvers.   In future work
we plan to increase the efficiency of the solvers so that larger spatial discretizations 
may be used.  
Despite the relatively coarse mesh, we still resolved
a numerical solution to $F_R(\mathbf{x})$, which likely has an error to the 
true solution that is first order, i.e. $\mathcal{O}(1/n)$.
We also set the built in MATLAB tolerance to $10^{-8}$. 
The recovered candidate $\rho^*(\mathbf{x})$ (see Figure \ref{Contour2D})
was found to have a guarantee of $\alpha = 0.99$, and a relative entropy to $F_R(\mathbf{x})$
of
$\mathcal{F}(\rho^*) = 0.0011$.
We now make several remarks on the characteristics of $\rho^*(\mathbf{x})$:
\begin{enumerate}
	\item[(i)] The support of $F_{\rho}(\mathbf{x})$ is complementary to 
	$W_R^+(\mathbf{x})$, i.e., $F_{\rho}(\mathbf{x}) W_R^+(\mathbf{x}) = 0$.
	This implies that $\rho^*(\mathbf{x})$ satisfies Proposition \ref{Prop_supp},
	and hence $\mathcal{E}(\rho)$ is convex when restricted to densities having
	a support contained in the support of $\rho^*(\mathbf{x})$.
	\item[(ii)] The solution $\rho^*(\mathbf{x})$ with support $S_*$, 
	exhibits spikes at the four corners of the support.  
	To provide some explanation for the 
	spikes, note that the previous item (i) implies that 
	$\mathcal{E}(\rho^*) = \mathcal{K}_R(\rho^*)$ (see also Proposition
	\ref{Prop_supp}). 
	The spikes may then be attributed to the recovered
	solution $\rho^*(\mathbf{x})$ wanting to minimizing the convex 
	part of the energy $\mathcal{K}_R(\rho)$ that arises from an
	interaction potential $K_R(\mathbf{x})$. 
	For this example $W_R^+(\mathbf{x})=0$ in a diamond neighborhood 
	near the origin, so that within this region	
	$K_R(\mathbf{x}) = W(\mathbf{x}) - 2\mathcal{E}_R$ contains
	the attractive-repulsive behavior of $W(\mathbf{x})$. Hence, 
	$\rho^*(\mathbf{x})$ can be thought of as a density that arises 
	from locally repelling particles confined to the set $S_*$. 
	As a result, the majority of the density concentrates
	near the boundary and corners of $S_*$.			 	
	\item[(iii)] Figure \ref{Figure2d_ParticleExample} shows the 
	support of the recovered minimizer $\rho^*(\mathbf{x})$, and the
	steady state arrangement of $N = 1000$ particles obtained
	from the gradient flow of equation (\ref{DiscreteGradFlow}) (using 
	random initial data). 
	The recovered minimizer  
	identifies the emergent length scale, and pattern obtained by the 
	collective interaction of a large number of particles.
\end{enumerate}

% --------------------------------------------------------------------------- 80
\begin{figure}[htb!] 
	\centering
	\subfloat[(a) $F_R(\mathbf{x})$.]{\includegraphics[width = 0.4\textwidth]{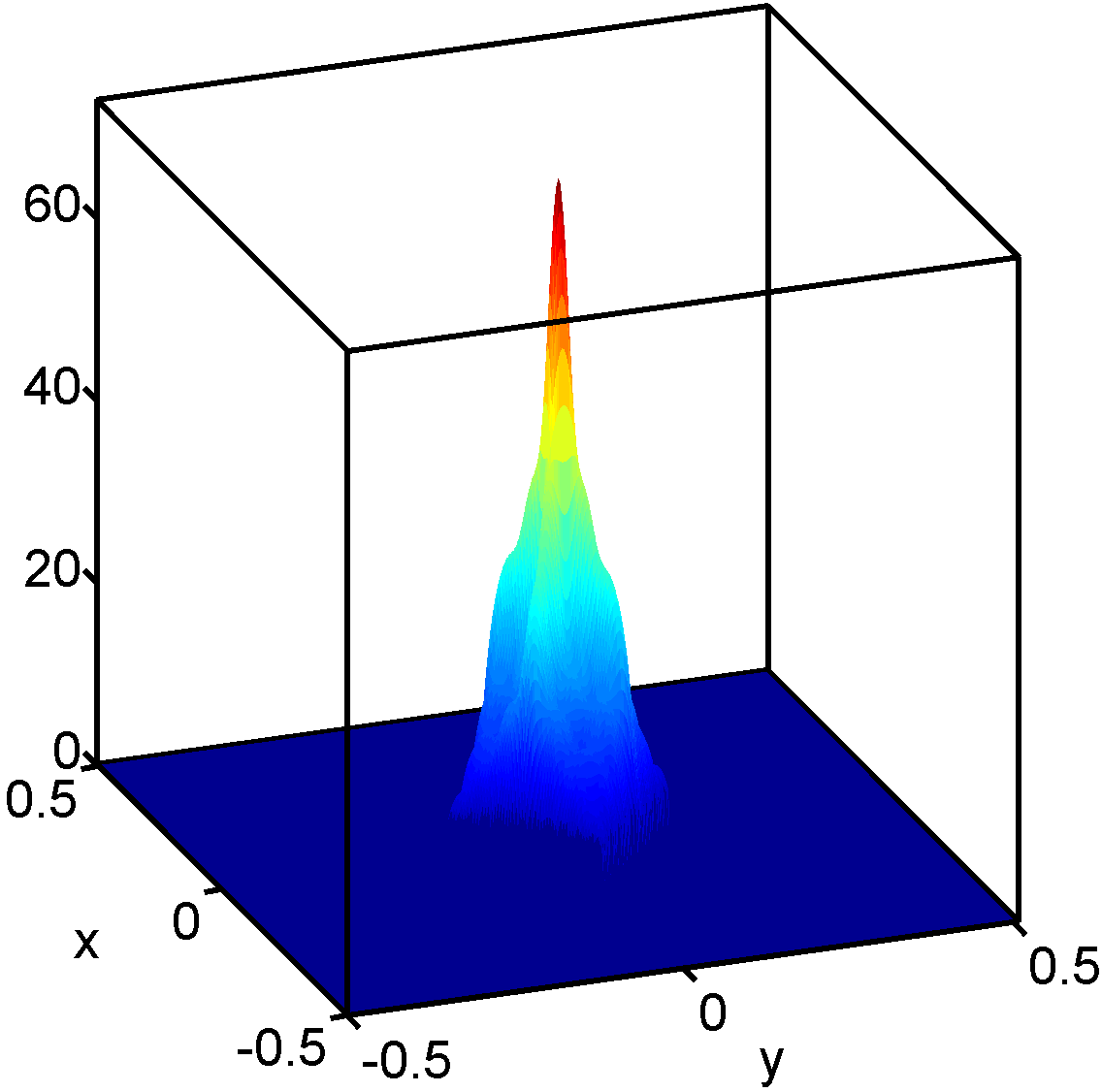} }    
   	\subfloat[(b) $\rho^*(\mathbf{x})$.]{\includegraphics[width = 0.38\textwidth]{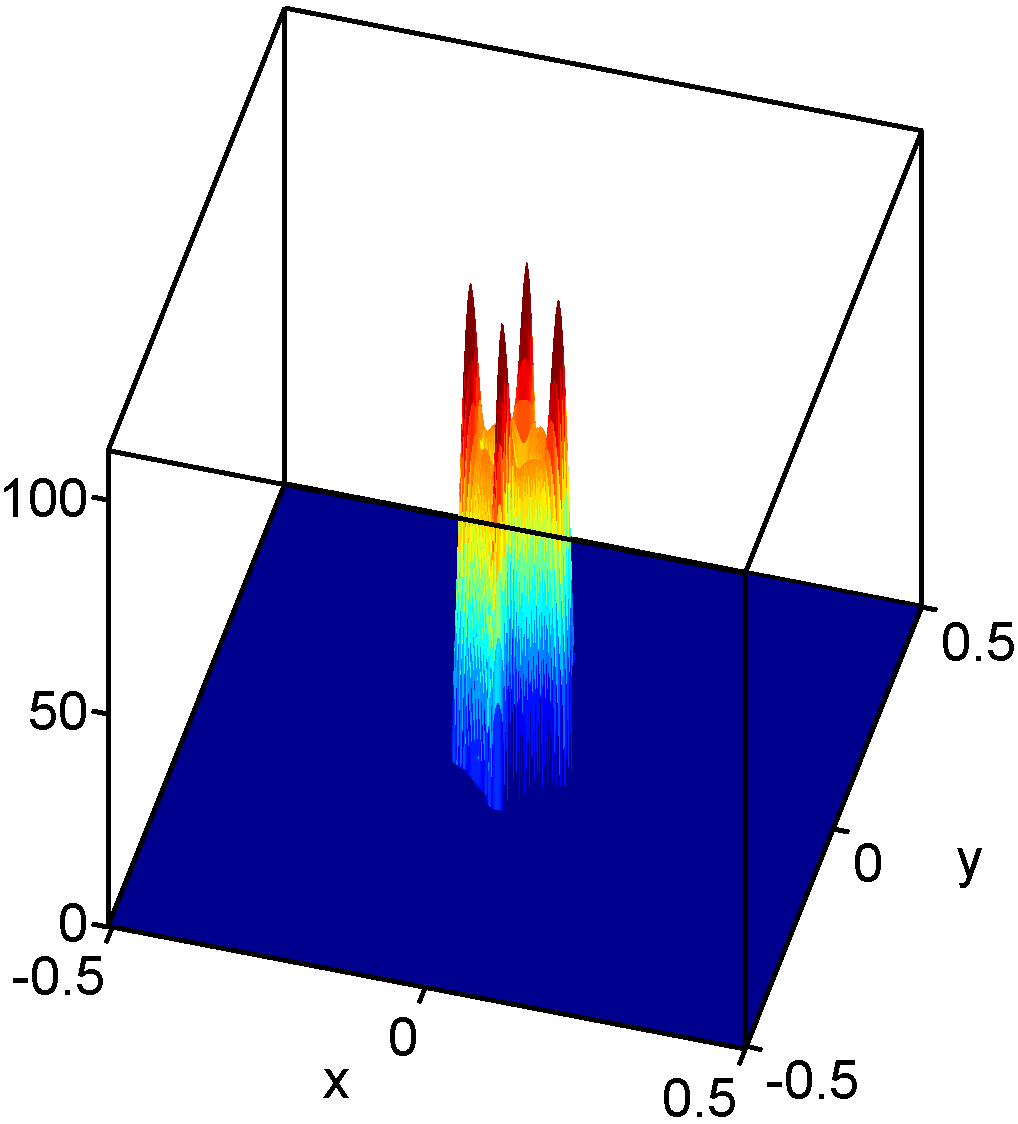}} \\
   	\subfloat[(c) $F_R(\mathbf{x})$.]{\includegraphics[width = 0.3\textwidth]{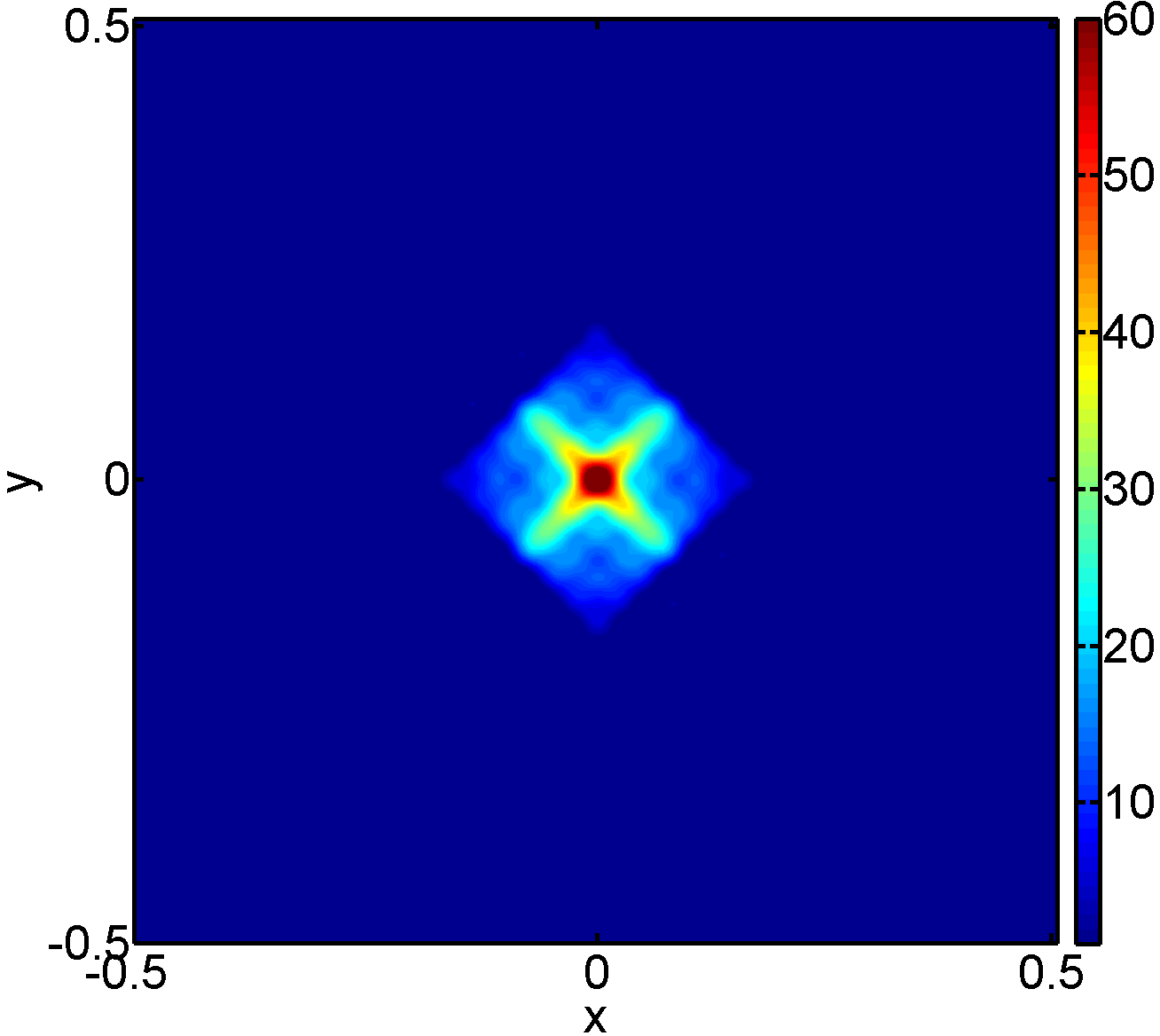}}
   	\subfloat[(d) $W_R^+(\mathbf{x})$.]{\includegraphics[width = 0.3\textwidth]{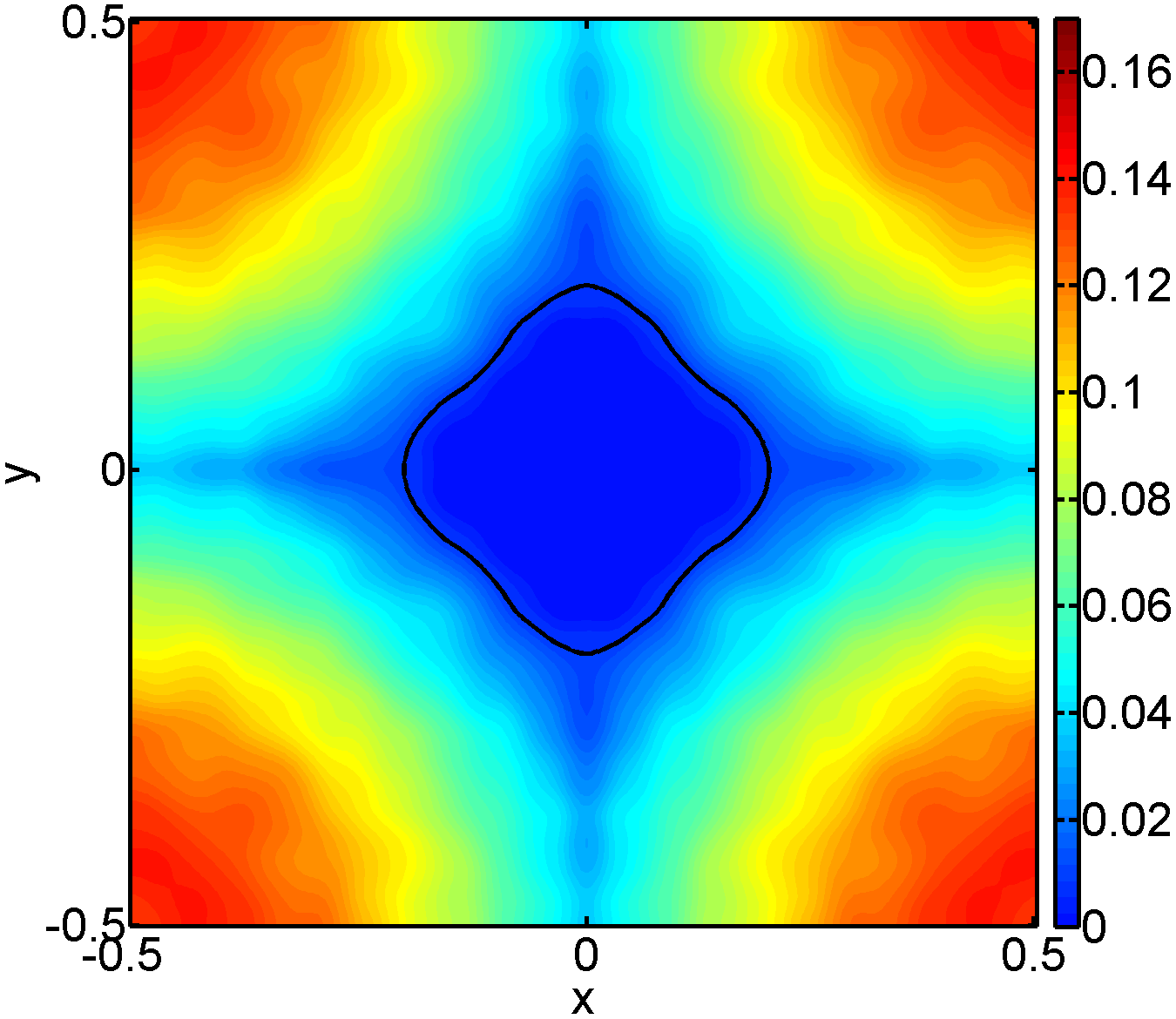}}
    \caption{
    Results for the two dimensional periodic potential in equation (\ref{PeriodicMorse2d}) 
    with $(L, G) = (1.5, 0.9)$, and $40\times 40$ grid. 
    (a) Target auto-correlation $F_R(\mathbf{x})$. 
    (b) $\rho^*(\mathbf{x})$ with guarantee $\alpha = 0.99$. 
    (c) Contour plot for $F_R(\mathbf{x})$ showing the support. 
    (d) Contour plot of $W_R^+(\mathbf{x})$ with a black line indicating the 
    region where $W_R^+(\mathbf{x}) = 0$. Note that 
    $W_R^+(\mathbf{x}) F_{\rho}(\mathbf{x}) = 0$, implying that 
    $\rho^*(\mathbf{x})$ satisfies Proposition \ref{Prop_supp}. }
 \label{Contour2D}
\end{figure}

\begin{figure}[htb!] 
	\centering
   	\subfloat[(a) $\rho^*(\mathbf{x})$.]{\includegraphics[width = 0.4\textwidth]
   	{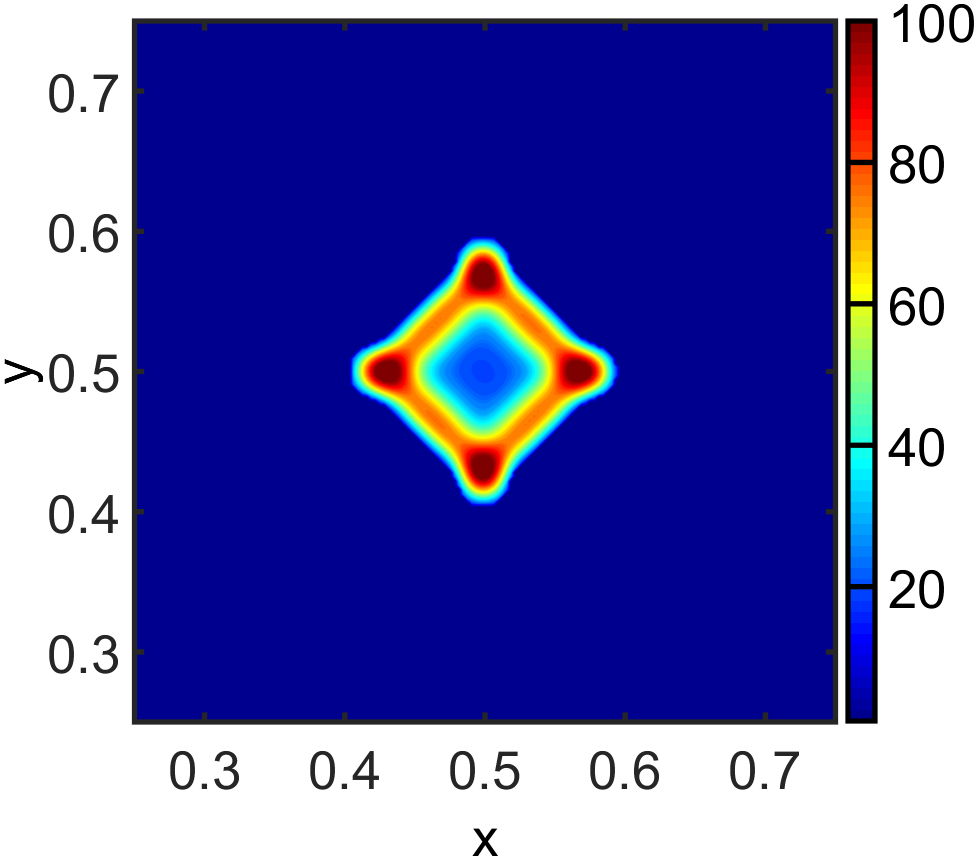}} 
	\subfloat[(b) $N = 1000$ particles.]
	{\includegraphics[width = 0.345\textwidth]{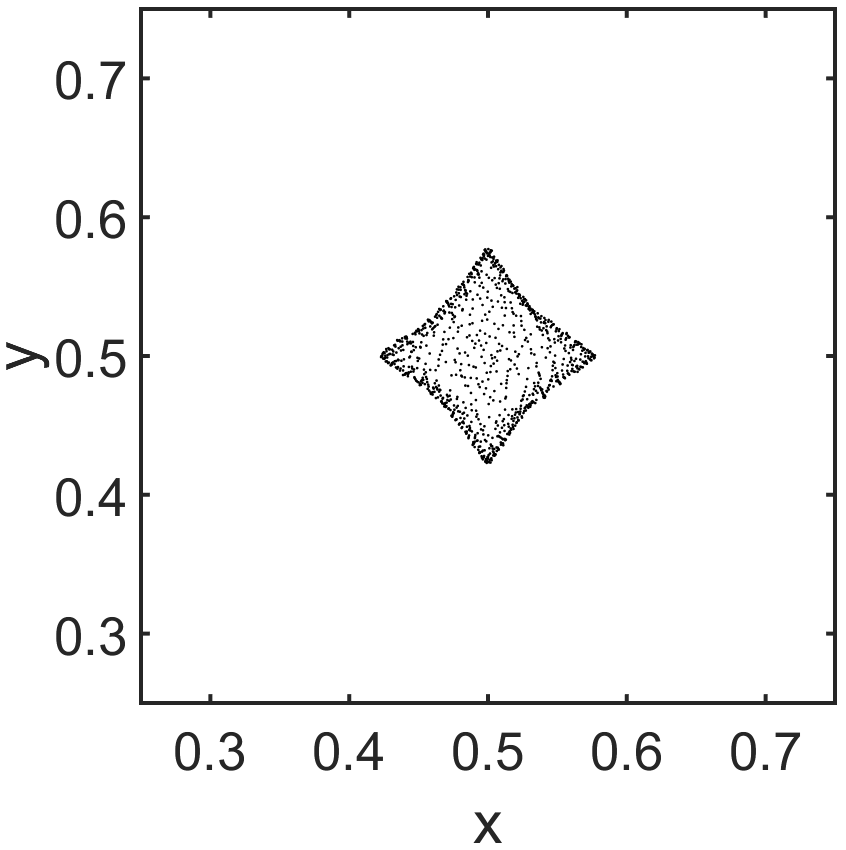}}
	\caption{A comparison of the recovered solution $\rho^*(\mathbf{x})$ 
	for the two dimensional periodic potential in equation (\ref{PeriodicMorse2d}) 
    (for parameters $(L, G) = (1.5, 0.9)$, and $40\times 40$ grid); with a 
    discrete steady state gradient flow. 
    Figure (a) shows the contour
    plot of $\rho^*(x)$, while Figure (b) shows the steady state solution of
    equation (\ref{DiscreteGradFlow}) with $N = 1000$ particles.  Note the 
	similarity in the support of $\rho^*(x)$ with the coalescence 
	of the individual particles.  }
	\label{Figure2d_ParticleExample}
\end{figure}

% --------------------------------------------------------------------------- 80
\subsection{Solutions $F_R(x)$ to (R) that are non-classical }
For values of $(L, G) = (0.5, 1.5)$, the solution $F_R(\mathbf{x})$ is a collection 
of discrete Dirac masses. Figure \ref{Discrete2d} shows the support of 
$F_R(\mathbf{x})$, $F_{\rho^*}(\mathbf{x})$ and $\rho^*(\mathbf{x})$.  
As evident by the small dots in Figure \ref{Discrete2d}c, 
this is a case where the recovered $\rho^*(\mathbf{x})$ has an auto-correlation
$F_{\rho}(\mathbf{x})$ that is not exactly inside $F_R(\mathbf{x})$, i.e., 
$\textrm{supp}(F_{\rho}) \nsubseteq \textrm{supp}(F_{R})$.  This implies that
Proposition \ref{Prop_supp} does not hold, and the recovered minimizer from
$F_R(\mathbf{x})$ is only an approximate one at best.  
One interesting observation, is that the recovery procedure successfully 
matches $93\%$ of the support of $F_{\rho^*}(\mathbf{x})$ 
with $F_R(\mathbf{x})$, so that $\rho^*(\mathbf{x})$ contains length scales
that try to optimize the overall energy $\mathcal{E}(\rho)$. However 
relative to the constant state $\rho(\mathbf{x}) = 1$, the guarantee 
is $\alpha = 0.54$, indicating there is a large gap between $\mathcal{E}(\rho^*)$ 
and the lower bound $\mathcal{E}_R$. For this example, it is possible that 
even the true global minimum $\rho_0(\mathbf{x})$ still has a large gap relative
to the bound $\mathcal{E}_R$. Figure \ref{Figure2d_ParticleExample_Discrete}
compares the support of $\rho^*(\mathbf{x})$ with the
	steady state arrangement of $N = 1000$ particles obtained
	from the gradient flow of equation (\ref{DiscreteGradFlow}) (using 
	random initial data). 	The figure shows that particles coalesce into 
	points that are not in a well defined pattern. 
	Finally, we remark that when the recovered minimizers $\rho^*(\mathbf{x})$ 
	have sharp spikes, the exact height and symmetry of the spikes obtained from 
	the Schultz-Snyder algorithm may become sensitive to small perturbations 
	in the target function $F_R(\mathbf{x})$. Developing alternative recovery 
	methods 
	with improved stability properties 
	may therefore be important in the future.

% --------------------------------------------------------------------------- 80
\begin{figure}[htb!] 
	\centering
    \subfloat[(a) $F_R(\mathbf{x})$, $F_{\rho^*}(\mathbf{x})$.]
    {\includegraphics[width = 0.32\textwidth]{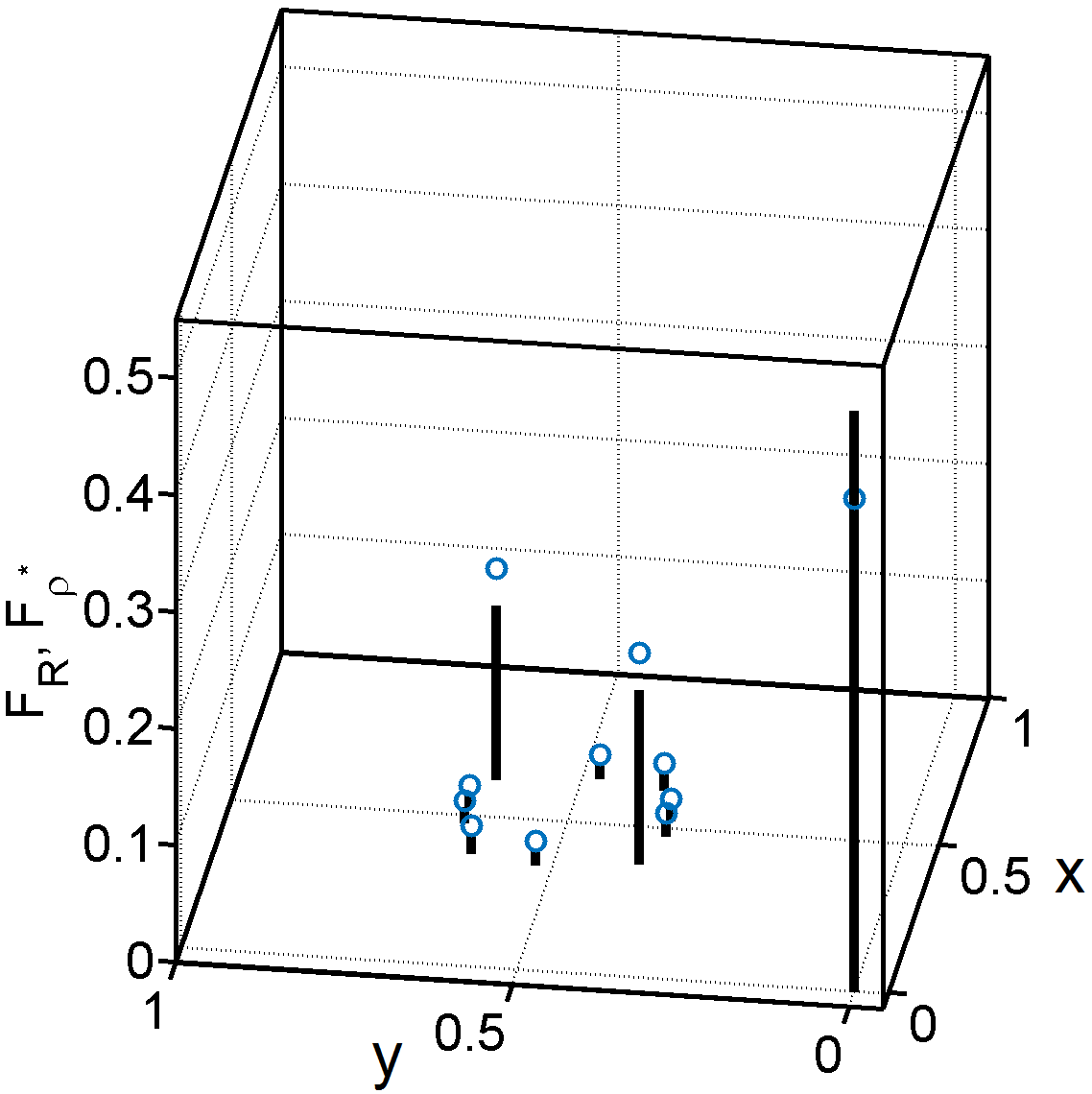} }
    \subfloat[(b) $F_R(\mathbf{x})$.]
    {\includegraphics[width = 0.3\textwidth]{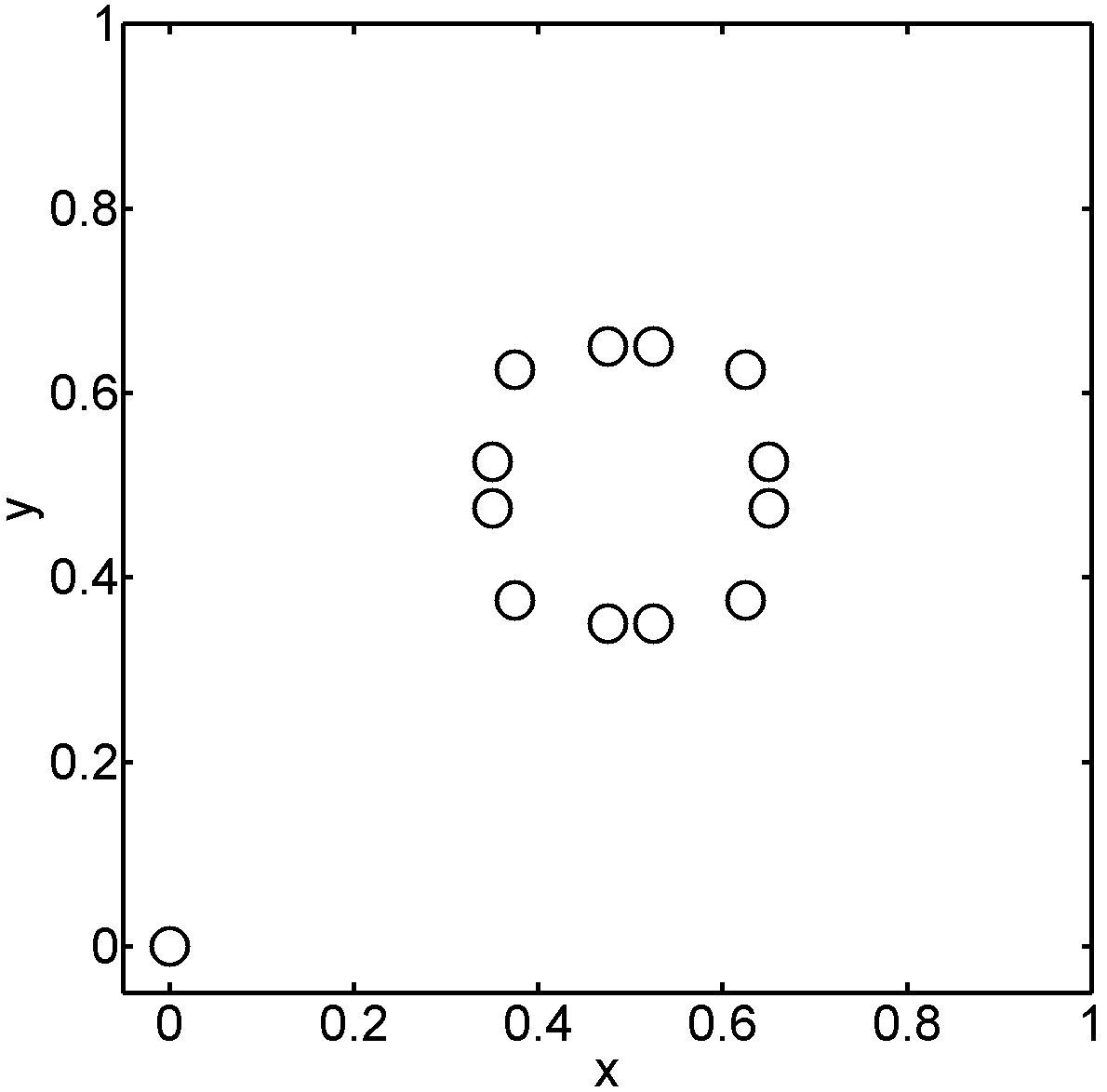} }
	\subfloat[(c) $F_{\rho^*}(\mathbf{x})$.]
	{\includegraphics[width = 0.3\textwidth]{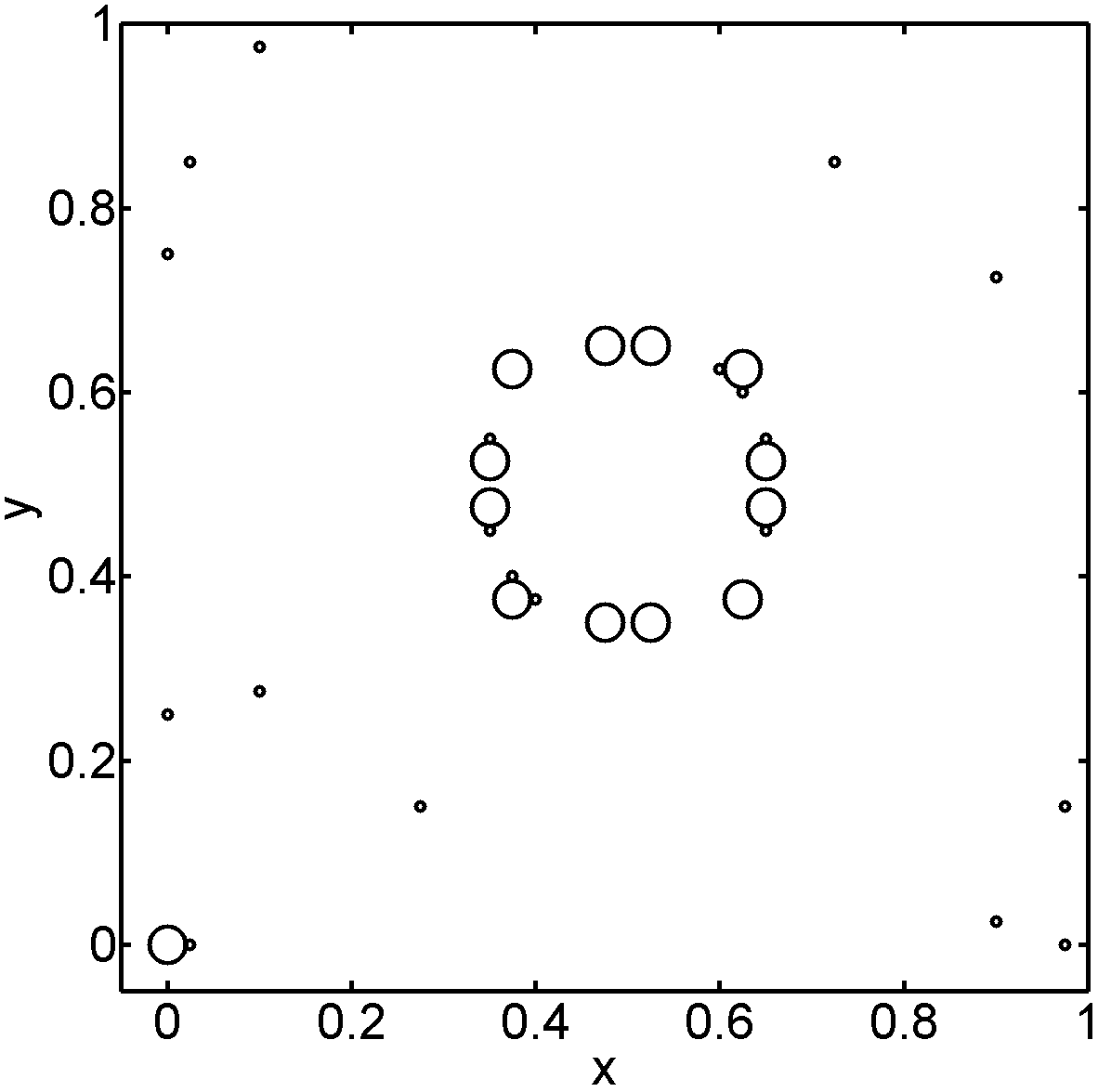}}
     \caption{Results for the two dimensional potential in equation (\ref{PeriodicMorse2d})
     with $(L, G) = (0.5, 1.5)$ and a $40\times 40$ grid. 
     Figure (a) shows $F_R(\mathbf{x})$ (black lines) and the
     recovered $F_{\rho^*}(\mathbf{x})$ (blue circles).
     Figure (b) shows the support of $F_R(\mathbf{x})$, while figure
     (c) shows the support of $F_{\rho^*}(\mathbf{x})$. The large circles account 
     for a total mass of $0.9267$, while the small circles 
     (each with mass $< 0.006$) account for the 
     remaining mass. 
     The value of the functional $\mathcal{F}(\rho^*) = 0.086$.}
     \label{Discrete2d}
\end{figure}

\begin{figure}[htb!] 
	\centering
   	\subfloat[(a) $\rho^*(\mathbf{x})$.]{\includegraphics[width = 0.32\textwidth]
   	{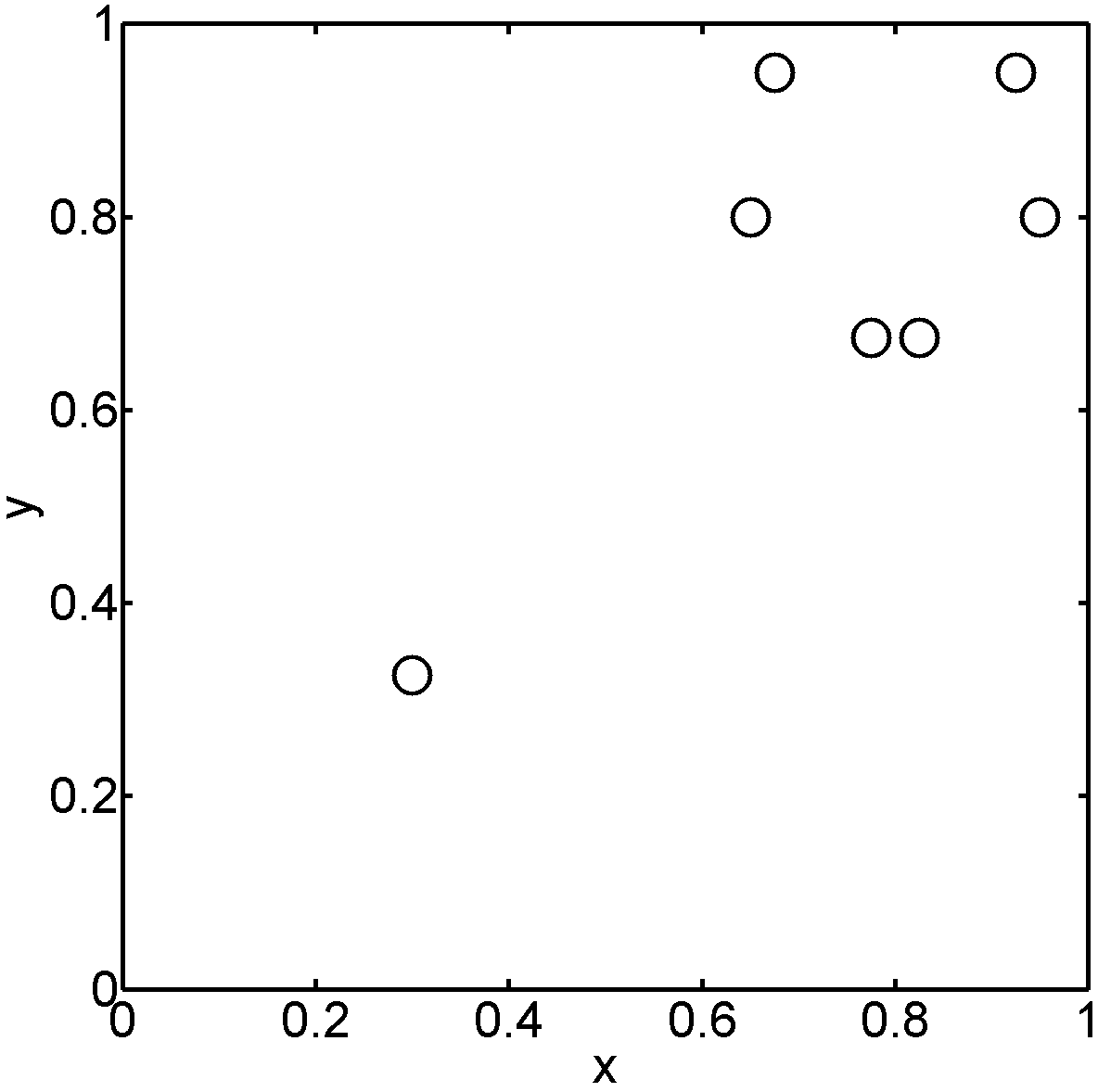}} 
	\subfloat[(b) $N = 1000$ particles.]
	{\includegraphics[width = 0.315\textwidth]{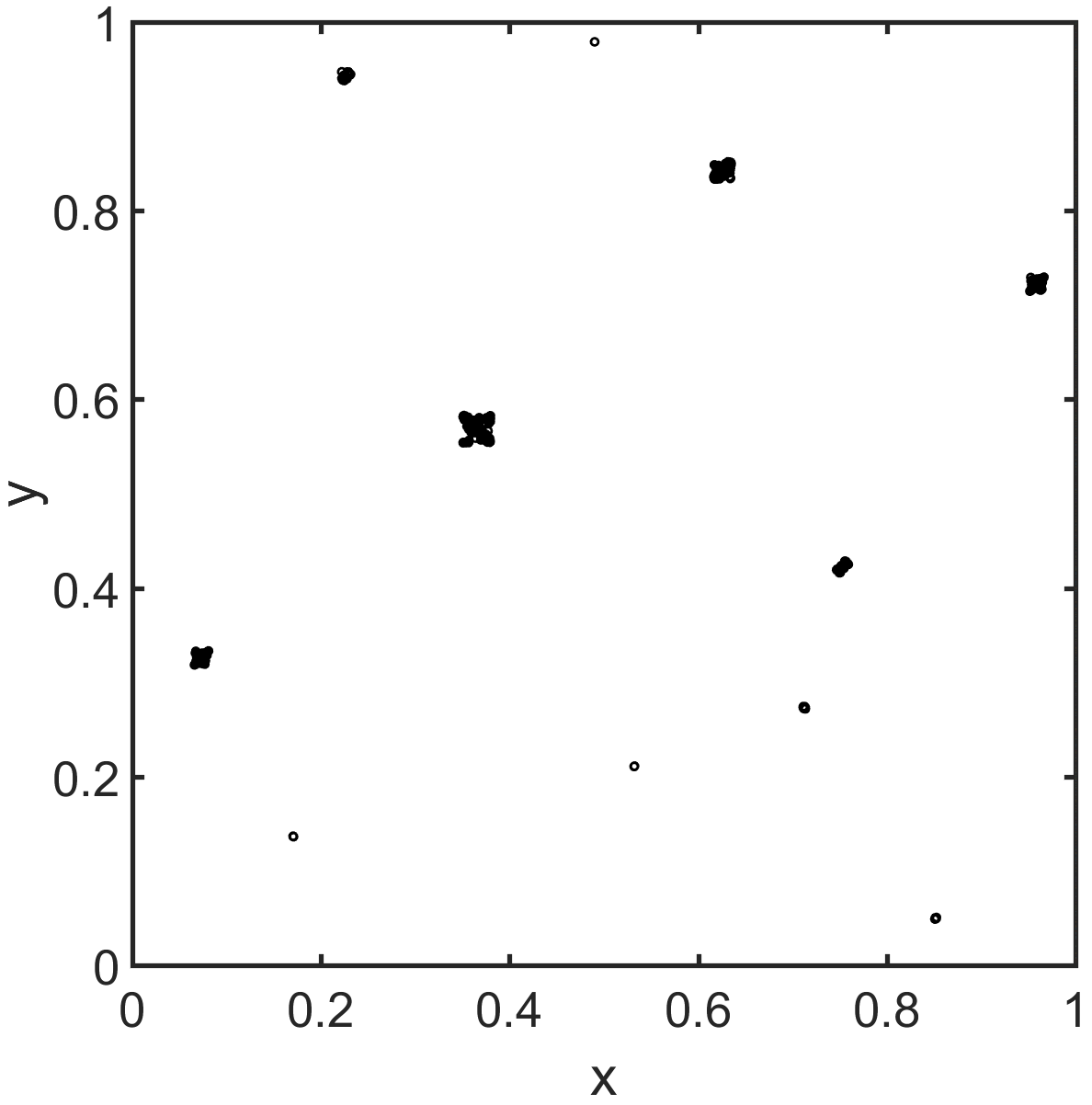}}
	\caption{A comparison of the recovered solution $\rho^*(\mathbf{x})$ 
	(with guarantee $\alpha = 0.54$) for the two dimensional periodic 
	potential in equation (\ref{PeriodicMorse2d}) and 
    parameters $(L, G)  = (0.5, 1.5)$, with a discrete steady state gradient flow. 
    Figure (a) shows the support of $\rho^*(\mathbf{x})$. The
    support contains $0.9894$ of the mass of $\rho^*(\mathbf{x})$. 
    Figure 
    (b) shows the steady state solution of
    equation (\ref{DiscreteGradFlow}) with $N = 1000$ particles. }
	\label{Figure2d_ParticleExample_Discrete}
\end{figure}

% --------------------------------------------------------------------------- 80

\section{Discussion and conclusions}

% --------------------------------------------------------------------------- 80
In this paper we provide a new approach for systematically computing 
approximate minimizers to an energy that models pairwise interactions.  
This is done by relaxing the non-convex optimization problem into a convex one 
to obtain a new sufficient condition for global minimizers.  A recovery 
procedure is then introduced as a way to find candidate minimizers that
satisfy the new sufficient condition (see Remark~\ref{Rmk:verification}). 
The advantage of the approach is that the resulting convex relaxation 
may be described analytically, which then leads to numerical 
descritizations of the new condition that may be solved using well-known methods. 

Analytically, the sufficient condition arises a lower bound to the 
minimum energy of the non-convex objective function.
The new lower bound then provides a way to quantify how optimal a candidate 
minimizer is.  The utility of the approach is demonstrated by the 
computation of a phase diagram for the periodic Morse potential, and also 
with the computation of minimizers for numerous interaction potentials 
in one and two dimensions.  For example, 
a lattice of Dirac masses is shown to be the global 
minimum, for specific parameter values, in the periodic Morse potential.  
Verifying that a lattice is a global minimizer to a non-local energy is a 
difficult problem in mathematical physics, with great practical interest (see
Remark \ref{Rmk:lattic_exact}).  Hence, new approaches that can show when a 
lattice minimizes a non-local energy are of theoretical interest.

Lastly, a fundamental problem in the minimization of pairwise energies over 
probabilities is to identify sets $S_*$ where the 
functional $\mathcal{E}(\rho)$ is convex, whenever the support of 
$\rho(\mathbf{x})$ is contained in $S_*$. To this 
end, our approach provides one way to identify such sets by exploiting
a dual optimization problem.  Specifically, the dual formulation results
in an optimal decomposition of the energy functional $\mathcal{E}(\rho)$
into the sum of a convex and non-convex functional.  The resulting 
convex/non-convex splitting can then be used to analytically identify 
supports in which $\mathcal{E}(\rho)$ is convex. 
From a physical perspective, this dual decomposition provides new insight into 
the natural length scales that many particle systems may self-assemble into; 
and may eventually help in designing and controlling pattern formation in many
particle systems.

\bigskip 
% --------------------------------------------------------------------------- 80
{\bf Acknowledgments:}
The author (DS) would like to thank Ihsan Topaloglu for many insightful 
comments regarding pairwise interaction problems.  The authors have also 
greatly benefited from conversations with Rustum Choksi, Robert Kohn, Cyrill 
Muratov and Jennifer Park.  The authors would also like to thank the anonymous
reviewers for many comments that helped to improve the presentation of
the paper. This work was supported by a 
grant from the Simons Foundation ($\#359610$, David Shirokoff); and partial 
support through the National Science Foundation, DMS--1719693 (Shirokoff).
\bigskip

% --------------------------------------------------------------------------- 80

\section*{Appendix A: Cases where the lower bound (R) is sharp}\label{AppendixA} 

There are several straightforward cases where the lower bound (R) is sharp, 
and the recovery $\rho^*(\mathbf{x})$ is guaranteed to be the exact global 
minimum--even when $\mathcal{E}(\rho)$ is non-convex.  In this section we 
outline the known cases where (R) is sharp. We also characterize the 
corresponding dual decomposition obtained from (D) in the known exact cases.

\begin{proposition} \label{Prop_DeltaMin}
	For a $W(\mathbf{x})$ satisfying properties (W1)--(W4), 
	$\rho_0(\mathbf{x}) = \delta(\mathbf{x})$ is a global minimizer to 
	(P) if and only if $W(\mathbf{0}) \leq W(\mathbf{x})$ for all 
	$\mathbf{x} \in \Omega$.  
\end{proposition}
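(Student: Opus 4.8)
The plan is to prove both directions by exploiting the single computation $\mathcal{E}(\delta) = \tfrac{1}{2}W(\mathbf{0})$, which follows immediately from substituting $\rho_0(\mathbf{x}) = \delta(\mathbf{x})$ into (\ref{Energy}) and using the continuity (W2) of $W$ to make the integration against Dirac masses well defined. Since $\delta(\mathbf{x})$ is a feasible probability measure, it suffices to compare its energy against that of arbitrary competitors.

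For the sufficiency direction ($\Leftarrow$), suppose $W(\mathbf{0}) \leq W(\mathbf{x})$ for all $\mathbf{x} \in \Omega$. I would bound the energy of an arbitrary competitor pointwise. Given any probability measure $\rho(\mathbf{x}) \in \mathcal{C}_1$ with $\int_\Omega \rho = 1$, the product measure $\rho(\mathbf{x})\rho(\mathbf{y})$ is non-negative on $\Omega \times \Omega$, while $W(\mathbf{x}-\mathbf{y}) - W(\mathbf{0}) \geq 0$ is a non-negative continuous function. Integrating the product gives
\begin{align*}
\mathcal{E}(\rho) - \tfrac{1}{2}W(\mathbf{0}) &= \frac{1}{2}\int_\Omega \int_\Omega \rho(\mathbf{x})\rho(\mathbf{y})\big(W(\mathbf{x}-\mathbf{y}) - W(\mathbf{0})\big)\du \mathbf{x} \du \mathbf{y} \geq 0,
\end{align*}
where I have used $\big(\int_\Omega \rho\big)^2 = 1$ to produce the constant term. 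Hence $\mathcal{E}(\rho) \geq \mathcal{E}(\delta)$ for every feasible $\rho$, so $\delta$ attains the global minimum.

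For the necessity direction ($\Rightarrow$), I would argue by contraposition with an explicit two-point competitor. Suppose the condition fails, i.e. there is some $\mathbf{x}_0 \in \Omega$ (necessarily $\mathbf{x}_0 \neq \mathbf{0}$) with $W(\mathbf{x}_0) < W(\mathbf{0})$. Consider the probability measure $\rho(\mathbf{x}) = \tfrac{1}{2}\delta(\mathbf{x}) + \tfrac{1}{2}\delta(\mathbf{x}-\mathbf{x}_0)$. Expanding (\ref{Energy}) into its four $\delta$--$\delta$ contributions and using mirror symmetry (W1), $W(-\mathbf{x}_0) = W(\mathbf{x}_0)$, gives $\mathcal{E}(\rho) = \tfrac{1}{4}\big(W(\mathbf{0}) + W(\mathbf{x}_0)\big)$, so that
\begin{align*}
\mathcal{E}(\rho) - \mathcal{E}(\delta) &= \tfrac{1}{4}\big(W(\mathbf{x}_0) - W(\mathbf{0})\big) < 0.
\end{align*}
Thus $\delta$ is not a global minimizer, establishing the contrapositive.

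The argument is essentially elementary, so I do not anticipate a serious obstacle; the only point requiring care is the measure-theoretic justification that the pointwise inequality $W(\mathbf{x}-\mathbf{y}) \geq W(\mathbf{0})$ integrates correctly against the (possibly singular) product measure $\rho \times \rho$. This is handled cleanly by writing $\mathcal{E}(\rho) - \tfrac{1}{2}W(\mathbf{0})$ as the pairing of the non-negative continuous function $W(\mathbf{x}-\mathbf{y}) - W(\mathbf{0})$ against the non-negative measure $\rho \times \rho$, which is non-negative directly from the definition of $\mathcal{C}_1$ in problem (P). As an alternative route for necessity, one could instead invoke the first-order condition (\ref{ExtendedFirstOrder_condition}): for $\rho^* = \delta$ one computes $\Lambda(\mathbf{x}) = W(\mathbf{x})$ and $\mu = \mathcal{E}(\delta) = \tfrac{1}{2}W(\mathbf{0})$, so that $\Lambda(\mathbf{x}) \geq 2\mu$ reduces to exactly $W(\mathbf{x}) \geq W(\mathbf{0})$; but the direct two-point construction above is more self-contained and avoids appealing to the variational conditions.
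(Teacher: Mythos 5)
Your proposal is correct and follows essentially the same route as the paper: the sufficiency direction bounds $\mathcal{E}(\rho)$ below by $\tfrac{1}{2}W(\mathbf{0})$ using the non-negativity of $\rho(\mathbf{x})\rho(\mathbf{y})$ against $W(\mathbf{x}-\mathbf{y})-W(\mathbf{0})\geq 0$, and the necessity direction uses exactly the paper's two-point competitor $\tfrac{1}{2}\big(\delta(\mathbf{x})+\delta(\mathbf{x}-\mathbf{x}_0)\big)$ with energy $\tfrac{1}{4}\big(W(\mathbf{0})+W(\mathbf{x}_0)\big)$, phrased as a contrapositive rather than the paper's contradiction, which is an immaterial difference. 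Your added care about pairing the continuous non-negative integrand against the possibly singular product measure, and the side remark connecting the condition to (\ref{ExtendedFirstOrder_condition}), are both sound but do not change the substance of the argument.
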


% --------------------------------------------------------------------------- 80
\begin{proof}
	If $W(\mathbf{0}) \leq W(\mathbf{x})$ for all $\mathbf{x} \in \Omega$, 
	set $\rho_0(\mathbf{x}) = \delta(\mathbf{x})$. Then for any probability 
	distribution $\rho(\mathbf{x})$:
	\begin{align*}
		\mathcal{E}(\rho) &= 
		\frac{1}{2}\int_{\Omega}\int_{\Omega}W(\mathbf{x} - \mathbf{y}) 
		\rho(\mathbf{x})\rho(\mathbf{y}) \du \mathbf{x} \du \mathbf{y}, \\
		&\geq \frac{1}{2} W(\mathbf{0}) \int_{\Omega}\int_{\Omega} 
		\rho(\mathbf{x})\rho(\mathbf{y}) \du \mathbf{x} \du \mathbf{y} 
		= \mathcal{E}(\rho_0).
 	\end{align*} 
 	Hence $\rho_0(\mathbf{x})$ solves (P).
	To show the converse, take $\rho_0(\mathbf{x}) = \delta(\mathbf{x})$ as 
	a global minimizer to (P) and assume by contradiction there exists an 
	$\mathbf{s} \neq 0$ such that $W(\mathbf{s}) < W(\mathbf{0})$. Testing 
	the energy with a candidate 
	$\rho^*(\mathbf{x}) = \frac{1}{2}(\delta(\mathbf{x}) 
	+ \delta(\mathbf{x}-\mathbf{s}) )$ yields
	\begin{align}
		\mathcal{E}(\rho^*) &= 
		\frac{1}{4}\big( W(\mathbf{0}) + W(\mathbf{\mathbf{s}}) \big), \\
			&< \frac{1}{2} W(\mathbf{0}) = \mathcal{E}(\rho_0).
	\end{align}
	Hence, $\rho_0(\mathbf{x})$ cannot be a global minimizer and therefore 
	$W(\mathbf{0}) \leq W(\mathbf{s})$ for all $\mathbf{s}\in \Omega$. 
\end{proof}
% --------------------------------------------------------------------------- 80
\begin{remark}
	What is interesting about Proposition \ref{Prop_DeltaMin} is that the 
	condition on $W(\mathbf{x})$ does not at all imply that 
	$\mathcal{E}(\rho)$ is a convex functional.  As an example, 
	take $W(x) = -\cos(x) - \cos(2x) + 0.1 \; \cos(3x)$. Here $W(0)$ is the 
	minimum value of $W(x)$ yet $\mathcal{E}(\rho)$ is non-convex.
\end{remark}
%
% --------------------------------------------------------------------------- 80
The following simple proposition is known in the literature, however we 
repeat it here for completion.
\begin{proposition}\label{ConstantIsExact}
	Suppose $W(\mathbf{x})$ satisfies properties (W1)--(W4) and in addition
	satisfies property (\ref{Assumption_Abs_cont}), i.e.,
	\begin{align}\label{Assumption_Abs_cont_W}
		&\sum_{\mathbf{k}\in \mathbbm{Z}^d} \hat{W}(\mathbf{k}) < \infty, \quad
		\mathrm{where } \quad
		\hat{W}(\mathbf{k}) := \langle W, \cos(2\pi \mathbf{k}\cdot\mathbf{x}) \rangle. \\
		&\textrm{and } \quad W(\mathbf{x}) = 
		\sum_{\mathbf{k} \in \mathbbm{Z}^d } \hat{W}(\mathbf{k}) \cos(2\pi \mathbf{k}\cdot\mathbf{x}).
	\end{align}
	Then the function $\rho_0(\mathbf{x}) = 1$ is a global minimizer to (P) if 
	and only if 
	$\hat{W}(\mathbf{k}) \geq 0$ for all $\mathbf{k} \in \mathbbm{Z}^d$.
\end{proposition}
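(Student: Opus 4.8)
The plan is to diagonalize the energy in Fourier space and reduce the statement to a sign condition on each mode. The key tool is the Plancherel-type identity already established in the proof of Proposition~\ref{Prop_ConicDecomp}: under the summability hypothesis (\ref{Assumption_Abs_cont_W}), the cosine series of $W(\mathbf{x})$ converges uniformly, so the pairing $\langle F, W\rangle$ may be expanded term by term against the power spectrum of any probability measure. Concretely, for any probability measure $\rho(\mathbf{x})$ with auto-correlation $F = \rho\circ\rho$, I would write
\[
	\mathcal{E}(\rho) = \tfrac{1}{2}\langle F, W \rangle
	= \tfrac{1}{2}\sum_{\mathbf{k}\in\mathbbm{Z}^d} \hat{W}(\mathbf{k})\,\hat{F}(\mathbf{k}),
\]
where, exactly as in the computation proving property (A4) of Proposition~\ref{Prop_properties_of_A}, the coefficients
\[
	\hat{F}(\mathbf{k}) = \langle F, \cos(2\pi\mathbf{k}\cdot\mathbf{x})\rangle
	= \big|\langle \rho, \cos(2\pi\mathbf{k}\cdot\mathbf{x})\rangle\big|^2
	+ \big|\langle \rho, \sin(2\pi\mathbf{k}\cdot\mathbf{x})\rangle\big|^2 \geq 0
\]
form a non-negative power spectrum. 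Since $\hat{W}(\mathbf{0})=0$ by (W4), and since for the constant state $\hat{F}(\mathbf{k})=0$ for $\mathbf{k}\neq\mathbf{0}$ by orthogonality, one immediately reads off $\mathcal{E}(1)=0$.

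For the sufficiency direction ($\Leftarrow$), the argument is then a one-liner: if $\hat{W}(\mathbf{k})\geq 0$ for every $\mathbf{k}$, the displayed sum is a sum of non-negative terms, so $\mathcal{E}(\rho)\geq 0 = \mathcal{E}(1)$ for all probability measures $\rho(\mathbf{x})$, and hence $\rho_0(\mathbf{x})=1$ is a global minimizer.

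For necessity ($\Rightarrow$), I would argue by contradiction. Suppose $\rho_0=1$ is a global minimizer but $\hat{W}(\mathbf{k}_0)<0$ for some $\mathbf{k}_0\neq\mathbf{0}$. I then test the energy against the perturbed density $\rho(\mathbf{x}) = 1 + \epsilon\cos(2\pi\mathbf{k}_0\cdot\mathbf{x})$ with $0<\epsilon\leq 1$; this is a legitimate probability measure, since it has unit mass (the cosine has zero mean) and is non-negative (because $1+\epsilon\cos\geq 1-|\epsilon|\geq 0$). Its only non-vanishing spectrum entries are $\hat{F}(\pm\mathbf{k}_0)=\epsilon^2/4$ and $\hat{F}(\mathbf{0})=1$, so that, using $\hat{W}(-\mathbf{k}_0)=\hat{W}(\mathbf{k}_0)$ from (W1),
\[
	\mathcal{E}(\rho) = \tfrac{1}{2}\Big(\hat{W}(\mathbf{k}_0) + \hat{W}(-\mathbf{k}_0)\Big)\tfrac{\epsilon^2}{4}
	= \hat{W}(\mathbf{k}_0)\,\tfrac{\epsilon^2}{4} < 0 = \mathcal{E}(1),
\]
contradicting minimality of $\rho_0=1$. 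Hence $\hat{W}(\mathbf{k})\geq 0$ for all $\mathbf{k}\neq\mathbf{0}$, and the $\mathbf{k}=\mathbf{0}$ case holds trivially by (W4).

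I expect no deep obstacle here; the only point requiring care is the rigorous justification of the mode-by-mode expansion of $\langle F, W\rangle$ for measures $\rho(\mathbf{x})$ that need not be $L^2$ (e.g.\ sums of Dirac masses). This is precisely where hypothesis (\ref{Assumption_Abs_cont_W}) is used, and the truncation/uniform-convergence estimate given in the footnote to the proof of Proposition~\ref{Prop_ConicDecomp} applies verbatim with $K$ replaced by $W$, so I would simply invoke it rather than repeat it.
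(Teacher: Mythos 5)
Your proof is correct and takes essentially the same route as the paper: sufficiency via the term-by-term cosine-series (Plancherel-type) expansion of $\mathcal{E}(\rho)$, justified exactly as you say by (\ref{Assumption_Abs_cont_W}), and necessity by testing with $\rho(\mathbf{x}) = 1 + \epsilon\cos(2\pi\mathbf{k}_0\cdot\mathbf{x})$, where the paper simply takes $\epsilon = 1$. The only cosmetic difference is that you phrase the expansion through the power spectrum $\hat{F}(\mathbf{k})$ of the auto-correlation rather than expanding $\mathcal{E}(\rho)$ directly in the squared Fourier coefficients of $\rho$, which is the identical identity.
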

\begin{proof}
	If $\hat{W}(\mathbf{k}^*)  < 0$ 
	for some $\mathbf{k}^* \neq \mathbf{0}$, then 
	$\rho^*(\mathbf{x}) = 1 + \cos(2\pi\mathbf{x}\cdot\mathbf{k}^*) $ 
	has energy 
	\[
		\mathcal{E}(\rho^*) = \frac{1}{4} 
		\langle W, \cos(2\pi \mathbf{k}^*\cdot \mathbf{x})\rangle 
		< 0 = \mathcal{E}(1).
	\]
	Therefore the constant state is not the global minimum.  To show the converse, 
	substitute the cosine series
	expansion for $W(\mathbf{x})$ into $\mathcal{E}(\rho)$:
		\[ 
		\mathcal{E}(\rho) = \frac{1}{2}\sum_{\mathbf{k} \in 
		\mathbbm{Z}^d} \hat{W}(\mathbf{k})
		 \Big( \langle \rho, \cos(2\pi \mathbf{k}\cdot \mathbf{x})\rangle^2 
		+ \langle \rho, \sin(2\pi \mathbf{k}\cdot \mathbf{x})\rangle^2 \Big)
		 \geq 0. 
		\]
	This series is justified by the regularity assumption in 
	(\ref{Assumption_Abs_cont_W}).
	Since 
	$\hat{W}(\mathbf{k}) \geq 0$, the series expansion for $\mathcal{E}(\rho)$ over 
	$\mathbf{k}$ is always non-negative. Hence 
	$\mathcal{E}(\rho) \geq 0 = \mathcal{E}(1)$. 	
\end{proof}

% --------------------------------------------------------------------------- 80
\begin{proposition} \label{LowerBoundIsExact}
	Assume that $W(\mathbf{x})$ satisfies (W1)--(W4) and property 
	(\ref{Assumption_Abs_cont_W}).
	Then, the lower bound (R) is sharp when $\rho^*(\mathbf{x}) = 1$ or 
	$\rho^*(\mathbf{x}) = \delta(\mathbf{x})$ is a global minimum to (P).
\end{proposition}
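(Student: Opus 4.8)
The plan is to prove both cases by exhibiting an explicit feasible point for the dual problem (D) whose objective value $\mathcal{E}_D$ equals the known global minimum $\mathcal{E}_0$. Since Proposition~\ref{Prop_ConicDecomp} (part 2) guarantees $\mathcal{E}_D \le \mathcal{E}_R$ for every feasible decomposition of the form (\ref{ConicDecomposition}), and since $\mathcal{E}_R \le \mathcal{E}_0$ always holds (because (R) minimizes over a set $\mathcal{C}$ containing $\mathcal{A}$), producing such a decomposition squeezes $\mathcal{E}_0 = \mathcal{E}_D \le \mathcal{E}_R \le \mathcal{E}_0$ and forces $\mathcal{E}_R = \mathcal{E}_0$, i.e.\ sharpness. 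Thus the whole argument reduces to writing down one feasible decomposition in each case and verifying (D1)--(D3).

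First I would treat $\rho^*(\mathbf{x}) = \delta(\mathbf{x})$. By Proposition~\ref{Prop_DeltaMin}, this is a global minimum precisely when $W(\mathbf{0}) \le W(\mathbf{x})$ for all $\mathbf{x}$, so $\mathcal{E}_0 = \mathcal{E}(\delta) = \tfrac{1}{2}W(\mathbf{0}) = \tfrac{1}{2}\min_{\mathbf{x}} W(\mathbf{x})$. I would then use the decomposition of Example 1: set $\mathcal{E}_D = \tfrac{1}{2}\min_{\mathbf{x}} W(\mathbf{x})$, $K(\mathbf{x}) = 0$, and $W^+(\mathbf{x}) = W(\mathbf{x}) - 2\mathcal{E}_D \ge 0$. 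This is feasible, since $W^+$ is continuous and nonnegative (D1), $K = 0$ trivially meets (D2), and $\mathcal{E}_D$ is constant (D3). Its objective value is $\mathcal{E}_D = \tfrac{1}{2}W(\mathbf{0}) = \mathcal{E}_0$, which completes this case. For $\rho^*(\mathbf{x}) = 1$, Proposition~\ref{ConstantIsExact} shows this is a global minimum exactly when $\hat{W}(\mathbf{k}) \ge 0$ for all $\mathbf{k}$, and then $\mathcal{E}_0 = \mathcal{E}(1) = 0$ by (W4). Here I would take $W^+(\mathbf{x}) = 0$, $K(\mathbf{x}) = W(\mathbf{x})$, and $\mathcal{E}_D = 0$. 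Feasibility holds because $W^+ = 0$ meets (D1); $K = W$ has nonnegative cosine coefficients $\hat{K}(\mathbf{k}) = \hat{W}(\mathbf{k}) \ge 0$ and mean zero $\hat{K}(\mathbf{0}) = \hat{W}(\mathbf{0}) = 0$ by (W4), so (D2) holds, with the required summability (\ref{Assumption_Abs_cont}) supplied directly by the hypothesis (\ref{Assumption_Abs_cont_W}); and $\mathcal{E}_D = 0$ is constant (D3). Its value is $\mathcal{E}_D = 0 = \mathcal{E}_0$, completing the second case.

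The verifications above are routine, so the proof presents no serious obstacle; the only point needing care is confirming that each constructed $K(\mathbf{x})$ lies in $\mathcal{C}_2^*$ with the uniform-convergence assumption (\ref{Assumption_Abs_cont}) in force. This is exactly why property (\ref{Assumption_Abs_cont_W}) is included among the hypotheses, and it is what licenses the Plancherel-type manipulation already used in Proposition~\ref{Prop_ConicDecomp}.

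As an aside, one could instead argue directly from the primal (R) without the dual. For the delta case, $\tfrac{1}{2}\langle F, W\rangle \ge \tfrac{1}{2}W(\mathbf{0})\langle F, 1\rangle = \tfrac{1}{2}W(\mathbf{0})$ for every feasible $F \in \mathcal{C}$, using the nonnegativity (A1) together with $\langle F, 1\rangle = 1$; for the constant case, expanding $W$ in cosines gives $\tfrac{1}{2}\langle F, W\rangle = \tfrac{1}{2}\sum_{\mathbf{k} \neq \mathbf{0}} \hat{W}(\mathbf{k}) \langle F, \cos(2\pi \mathbf{k}\cdot\mathbf{x})\rangle \ge 0$ from (A4). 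I would nonetheless present the dual version, since it simultaneously yields the explicit dual decomposition that this appendix sets out to characterize in the known exact cases.
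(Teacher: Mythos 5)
Your proposal is correct, but it takes a genuinely different route from the paper's. The paper argues directly on the primal (R): for the constant case it expands $\langle F, W\rangle = \sum_{\mathbf{k}} \hat{F}(\mathbf{k})\hat{W}(\mathbf{k}) \geq 0$ using (A4) and the summability hypothesis (\ref{Assumption_Abs_cont_W}), and for the delta case it uses $W(\mathbf{0}) \leq W(\mathbf{x})$ together with (A1) and $\langle F, 1\rangle = 1$ to get $\tfrac{1}{2}\langle F, W\rangle \geq \tfrac{1}{2}W(\mathbf{0}) = \mathcal{E}_0$ --- i.e., exactly the two inequalities you relegate to your closing aside. You instead exhibit explicit feasible decompositions of the form (\ref{ConicDecomposition}) with value $\mathcal{E}_D = \mathcal{E}_0$ and invoke weak duality (Proposition~\ref{Prop_ConicDecomp}, part 2) to squeeze $\mathcal{E}_0 = \mathcal{E}_D \leq \mathcal{E}_R \leq \mathcal{E}_0$. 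This is sound: your certificates satisfy (D1)--(D3), with (\ref{Assumption_Abs_cont_W}) supplying the summability condition (\ref{Assumption_Abs_cont}) for $K = W$ and trivially for $K = 0$, and you correctly rely only on weak duality with an explicit feasible point, so Assumption~\ref{Reg_assumption} (existence of an optimal decomposition) is never needed. (If one worries that Proposition~\ref{Prop_ConicDecomp} part 2 is phrased in terms of a solution $F_R$, the same computation bounds $\tfrac{1}{2}\langle F, W\rangle \geq \mathcal{E}_D$ for every feasible $F$, so attainment is not an issue.) As for what each approach buys: the paper's primal computation is shorter and self-contained, and its closing observation that $F_R \circ F_R = F_R$ in both cases also delivers the exact-recovery claim, which your argument does not address (though the proposition as stated only asks for sharpness); your dual route is marginally heavier but simultaneously establishes that your two certificates are the optimal dual decompositions --- precisely the content of the remark the paper places immediately after this proposition --- so you prove the proposition and that remark in one stroke. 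The two arguments are formally equivalent in the end: pairing a feasible $F$ against your identity $W = W^+ + K + 2\mathcal{E}_D$ reproduces the paper's primal inequalities term by term.
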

\begin{proof}
	When $\rho^*(\mathbf{x}) = 1$, $W(\mathbf{x})$ has non-negative cosine 
	modes.  The lower bound functional in (R) may then be expanded
	in a cosine series (again which is justified by 
	(\ref{Assumption_Abs_cont_W})):
	\[ 
	\langle F, W\rangle = \sum_{\mathbf{k} \in \mathbbm{Z}^d} 
	\hat{F}(\mathbf{k}) \hat{W}(\mathbf{k}) \geq 0 
	= \langle 1, W\rangle = \mathcal{E}_0. 
	\]
	Hence $F(\mathbf{x}) = 1$ is the minimizer to (R) over continuous 
	functions, and $F_R(\mathbf{x}) = 1$ solves (R). 
	Alternatively, if $\rho^*(\mathbf{x}) = \delta(\mathbf{x})$ is a global 
	minimizer to (P), $W(\mathbf{0}) \leq W(\mathbf{x})$ for all 
	$\mathbf{x} \in \Omega$. Hence, for any probability distribution 
	$F(\mathbf{x})$:
	\[ 
		\mathcal{E}_0 = \frac{1}{2}W(\mathbf{0}) 
		= \frac{1}{2}\langle \delta(\mathbf{x}), W(\mathbf{x}) \rangle  
		\leq \langle F, W\rangle. 
	\]
	Therefore $F_R(\mathbf{x}) = \delta(\mathbf{x})$ solves (R) and is sharp.
	
	In both cases, when $F_R(\mathbf{x}) = 1$ and 
	$F_R(\mathbf{x}) = \delta(\mathbf{x})$, the solution $F_R(\mathbf{x})$ 
	satisfies $F_R \circ F_R = F_R$.  Hence, taking 
	$\rho^*(\mathbf{x}) = F_R(\mathbf{x})$, yields an exact recovery: 
	$F_R(\mathbf{x}) = \rho^* \circ \rho^*$.
\end{proof}
% --------------------------------------------------------------------------- 80

\begin{remark} 
	The cases discussed in Proposition \ref{LowerBoundIsExact} result in 
	simple optimal dual decompositions:
	\begin{itemize}	
		\item When $F_R(\mathbf{x}) = 1$ solves (R), the optimal dual 
		decomposition is 
	\[ 
		W_R^+(\mathbf{x}) = 0, 
		\quad K_R(\mathbf{x}) = W(\mathbf{x}), \quad \mathcal{E}_R = 0.
	\]
		\item When $F_R(\mathbf{x}) = \delta(\mathbf{x})$ solves (R) the 
		optimal dual decomposition is
	\[ 
	W_R^+(\mathbf{x}) = W(\mathbf{x}) - W(\mathbf{0}), 
	\quad K_R(\mathbf{x}) = 0, \quad \mathcal{E}_R = \frac{1}{2}W(\mathbf{0}). 
	\]
	\end{itemize}
\end{remark}

% --------------------------------------------------------------------------- 80

% --------------------------------------------------------------------------- 80

\section{Appendix B: Numerical solution of (R)} \label{sec:NumericalDetails}

In this section we present numerical details regarding the solution of (R) 
and dual decomposition (D). We discuss explicit details in dimension $d = 1$ 
and note that the extension to higher dimensions follows in a 
straightforward manner. To solve the relaxed problem, we use MATLAB's built 
in optimization routines, which require the construction of matrices 
representing the linear constraints in (R). 

Here we adopt the convention that vectors and matrices start with an index 
of $0$ (as opposed to MATLAB) so that row indices coincide with Fourier 
mode numbers. For the general problem (R) we discretize space with an
even number, $n > 0$, of points on an equispaced grid:
\[ 
	h = \frac{1}{n}, \quad  x_j = j h, \quad \textrm{ for } 0 \leq j \leq n-1.
\]

% --------------------------------------------------------------------------- 80
The functions $W(x)$ and $F(x)$ are then taken as $n$ dimensional vectors 
$\mathbf{w}, \mathbf{f} \in \mathbbm{R}^n$ so that:
\[ 
	\mathbf{w}_j \approx W(x_j), \quad \quad \mathbf{f}_j \approx F(x_j).
\]

There are two choices for imposing the mirror (or odd) symmetry of $\mathbf{f}$.
One can do it directly and set $\mathbf{f}_j = \mathbf{f}_{n-j}$, which 
will allow for a reduction in the number of variables to $n/2$; or one can 
build and enforce a sine constraint matrix. For efficiency reasons, we 
adopt the direct approach, however also describe how to construct the sine constraint
matrix.  

To build the matrices representing the sine and cosine constraints 
in (R), one may use the rows in the discrete Fourier transform matrix 
obtained via the fast Fourier transform.  Meanwhile, for the non-negativity 
constraint
in (R), one may either use the MATLAB's built in 
non-negativity constraint option, or directly enforce non-negativity 
by passing the MATLAB routine a constraint matrix.  Regardless of the 
option one uses, the three $n\times n$ constraint matrices can be constructed as 
follows:
% --------------------------------------------------------------------------- 80
\begin{align} \nonumber
	\textrm{Non-negative constraint matrix:}\quad\quad
		\mathbf{P}_{lj} &= -\delta_{lj},  \hspace{28.2mm} 
		0 \leq l, j \leq n-1, \\ \nonumber
	\textrm{Cosine mode matrix:} \quad\quad  
	\mathbf{C}_{l, :} &= -\textrm{{\fontfamily{qcr}\selectfont 
	real(fft(}}\mathbf{e}_l\textrm{{\fontfamily{qcr}\selectfont))}}, 
	\quad 0 \leq l \leq n-1, \\ \nonumber
	\textrm{Sine mode matrix:}\quad\quad  \mathbf{S}_{l, :} 
	&= \phantom{-} \textrm{{\fontfamily{qcr}\selectfont 
	imag(fft(}}\mathbf{e}_l\textrm{{\fontfamily{qcr}\selectfont))}}, 
	\quad 0 \leq l \leq n-1. 	
\end{align}
% --------------------------------------------------------------------------- 80

Here $\mathbf{C}_{l, :}$ and $\mathbf{S}_{l, :}$ are the entire $l^{th}$ 
matrix row, $\delta_{lj}$ is the Kronecker delta, and $\mathbf{e}_l$ is the 
$l^{th}$ row of the $n\times n$ identity matrix:
\begin{align} \nonumber
	\delta_{lj} = \left\{
\begin{array}{rl}
1 & \text{if } l = j,\\
0 & \text{if } l \neq j.\\
\end{array} \right. \hspace{10mm}
\mathbf{e}_l = \begin{bmatrix}
	 0,  0, \ldots, 0, 1, 0, \ldots, 0 
\end{bmatrix}.
\end{align}
By construction, the matrices have components 
$\mathbf{C}_{kj} = -\cos(2\pi k j h)$, $\mathbf{S}_{kj} = \sin(2\pi k j h)$ 
so that cosine and sine integrals are approximated via
\begin{align}
	-\langle \cos(2\pi k x), F(x) \rangle \approx h \sum_{j = 0}^{n-1} 
	\mathbf{C}_{k j} \mathbf{f}_j, \hspace{10mm} 	
	\langle \sin(2\pi k x), F(x) \rangle \approx h \sum_{j = 0}^{n-1} 
	\mathbf{S}_{k j} \mathbf{f}_j.
\end{align}
% --------------------------------------------------------------------------- 80
To write the mass constraint in (R) explicitly, we also introduce the unit 
vector
\begin{align} \nonumber
	\mathbf{1} = \begin{bmatrix}
	 1,  1,  1, \ldots, 1
\end{bmatrix}^T \in \mathbbm{R}^n.
\end{align}
% --------------------------------------------------------------------------- 80
Finally, note that by symmetry, the bottom half of the rows in matrices 
$\mathbf{C}$ and $\mathbf{S}$ are redundant. It is therefore sufficient to 
enforce constraints for only the rows of $l$ with 
$1 \leq l \leq \lfloor \frac{n}{2} \rfloor$ where 
\[  
	\Big\lfloor \frac{n}{2} \Big\rfloor =  \left\{
	\begin{array}{cl}
	\frac{n}{2} & \text{if } n \textrm{ is even},\\
	\frac{n-1}{2} & \text{if } n \textrm{ is odd}.\\
	\end{array} \right.
\]
% --------------------------------------------------------------------------- 80

The problem (R) then takes the discrete standard form:
\begin{align}  \nonumber
	(R_h) \quad \quad &\textrm{Minimize } 
	\quad \frac{1}{2}\mathbf{w}^T \mathbf{f} \\ \nonumber
	 &\textrm{subject to} \quad \mathbf{P} \; \mathbf{f} \leq 0, \\ \nonumber
	 &\phantom{\textrm{subject to}}\quad \mathbf{C}_{k,:} \; 
	 \mathbf{f} \leq 0, \hspace{10mm} 1 \leq k \leq \big\lfloor \frac{n}{2} 
	 \big\rfloor, \\ \nonumber
	 &\phantom{\textrm{subject to}}\quad \mathbf{S}_{k,:} \;\hspace{0.8mm} 
	 \mathbf{f} = 0, \hspace{10mm} 1 \leq k \leq \big\lfloor \frac{n}{2} 
	 \big\rfloor, \\ \nonumber
 	 &\phantom{\textrm{subject to}}\quad 	h \; \mathbf{1}^T \mathbf{f} = 1. 			  
\end{align}

% --------------------------------------------------------------------------- 80
Problem $(R_h)$ is then solved using a standard linear programming package 
with an interior-point algorithm.  We use MATLAB's 
{\fontfamily{qcr}\selectfont linprog} routine, with a tolerance set to 
$10^{-8}$.  In pseudo-code, the command takes the form:
\begin{align}\nonumber
	[\mathbf{f}_R, \; \mathcal{E}_R, \; \mathbf{W}^+, \; 
	\mathbf{K}] = \textrm{{\fontfamily{qcr}\selectfont linprog}}(\mathbf{w}, 
	\emph{constraint matrices} \; \mathbf{P}, \mathbf{C}, \mathbf{S}, \mathbf{1} ).
\end{align}
The output then consists of the optimal solution vector $\mathbf{f}_R$, 
the optimal solution value $\mathcal{E}_R$, as well as the dual decomposition 
vectors $\mathbf{W}^+$ and $\mathbf{K}$.  In other words, the dual decomposition
comes for free.

% --------------------------------------------------------------------------- 80
We identify two qualitatively different solutions $\mathbf{f}_R$ to problem 
$(R_h)$:
\begin{enumerate}[leftmargin = 1.4cm]
	\item[Case 1.] The solution $\mathbf{f}_R$ converges as $h\rightarrow 0$, 
	to a $C^0(\Omega)$ function with no Dirac mass singularities. In this case, 
	the procedure from Section \ref{Sec_KLrecovery} is used to recover a 
	discrete $\rho^*(\mathbf{x})$ from $\mathbf{f}_R$. The vector 
	$\rho^*(\mathbf{x})$ is discretized 
	using $n$ grid points on the same lattice as $\mathbf{f}_R$. The integrals 
	in the continuous Schulz-Snyder algorithm are also computed using 
	vectorized dot products (the standard midpoint rule is spectrally accurate 
	for smooth solutions on periodic domains and lower order for non-smooth 
	solutions $F_R(\mathbf{x})$).  The discrete $\rho^*(\mathbf{x})$ is computed to within steady-state 
	tolerances $\textrm{tol}_1$, $\textrm{tol}_2$ so that the discrete 
	quantities satisfy
	\begin{align}
		&\mathcal{F}(\rho_{n}) - \mathcal{F}(\rho_{n+1}) < \textrm{tol}_1, 
		\quad \quad ||\rho_{n+1} - \rho_n||_{L^1(\Omega_h)} < \textrm{tol}_2.\\
		&\textrm{where}\quad ||f||_{L^1(\Omega_h)} := h \sum_{j = 0}^{n-1} |f_j|.
	\end{align}

% --------------------------------------------------------------------------- 80	
		
	\item[Case 2.] The solution $\mathbf{f} \rightarrow F_R(x)$ converges in 
	distribution to a set of delta distributions as $h \rightarrow 0$. Namely, 
	for any smooth function $u(x)$ and corresponding discrete vector 
	$\mathbf{u}$, the value $h(\mathbf{u}^T \mathbf{f}) \rightarrow 
	\langle u(x), F_R(x) \rangle$ converges as $h\rightarrow 0$.   
\end{enumerate}

\begin{remark}
	In case 2, one may obtain Delta masses in 
	$\mathbf{f}_R$ with a support of one mesh point each by modifying
	the grid size $h$ to naturally 
	accommodate the spacings between the Delta masses.  To do this: (i) Obtain a solution 
	$\mathbf{f}_R$ (that may have Delta masses smeared over a few grid points) 
	to $(R_h)$ with a suitably fine mesh $h$; (ii) Estimate the 
	distance between the Dirac masses in $\mathbf{f}_R$; (iii)
	Take a new grid spacing $h'$, such that the distance between Delta
	masses is an integer multiple of $h'$; (iv) Resolve the discrete problem $(R_{h'})$ using the new grid $h'$ to obtain improved convergence. 
	Improvements in the linear 
	programming time were also observed when choosing a grid spacing $h'$ 
	that is commensurate with the spacings of the Dirac deltas.
\end{remark}

% --------------------------------------------------------------------------- 80
\begin{remark}
	In two dimensions, we found that the solution $\mathbf{f}_R$ to the linear 
	program ($R_h$) can become sensitive to the exact number $n$, the 
	prescribed tolerance, and the allowable number of interior point iterations. 
\end{remark}

% --------------------------------------------------------------------------- 80
\begin{remark}
	We systematically ran hundreds of recovery tests and found that the 
	Schulz-Snyder algorithm often converged to the same value 
	$\mathcal{F}(\rho_{\infty})$ within numerical error.  We did, however 
	observe that when $F_R(\mathbf{x})$ was a discrete probability measure in two 
	dimensions, there were multiple $\rho^*(\mathbf{x})$ that minimized 
	$\mathcal{F}(\rho)$. The different $\rho^*(\mathbf{x})$ had almost 
	the same recovery guarantees $\alpha$ to within $\pm 0.02$.   
\end{remark}

% --------------------------------------------------------------------------- 80

\section*{Appendix C: Periodic effects for the solution to (R)} \label{DiscreteEffects1D}

The purpose of this section is to examine a simple sub-class 
of minimizers $F_R(x)$ to (R) in one dimension. 
We show that provided $W(x)$ satisifies a few
regularity properties, minimizers within this sub-class always have 
spacings that are commensurate with a discrete lattice.  This will turn out to be a direct result of the periodic domain $\Omega$. 

In this section, we consider the restricted set of probabilities 
\begin{align} \label{Subclass_3Delta}
	F(x) &= \alpha \delta(x) + \beta \delta(x - s) + \beta \delta(x + s), \\ \nonumber
	\alpha &+ 2\beta = 1, \quad \quad 0 \leq s \leq \frac{1}{2}.
\end{align}
% --------------------------------------------------------------------------- 80
The subclass (\ref{Subclass_3Delta}) is then completely characterized by two 
parameters $(s, \beta)$.  The lower bound problem (R), restricted to the 
probabilities (\ref{Subclass_3Delta}) with three delta masses, is:
\[	
	(R_3) \quad \textrm{ minimize } \quad 
	\frac{1}{2} \langle W(x), F(x)\rangle = \frac{1}{2}W(0) 
	+ \beta \big( W(s)-W(0)\big). 
\]

% --------------------------------------------------------------------------- 80
We now outline why unique minimizers of the form (\ref{Subclass_3Delta}) to 
($R_3$), characterized by values $(s^*, \beta^*)$, often have support 
commensurate with a lattice: that is $s^*\in \mathbbm{Q}$ is a rational number.

% --------------------------------------------------------------------------- 80
	First optimize the energy $(R_3)$ at a fixed $s$, over the weight $\beta$, 
	thereby yielding a function only of $s$:
	\begin{align}\nonumber
		E(s) &:= \frac{1}{2}W(0) +  \inf_{\beta} \Big[ \beta \; 
		\big(W(s) - W(0) \big) \Big]. 
	\end{align}
% --------------------------------------------------------------------------- 80
	If $W(0) \leq W(s)$
	then $E(s) = \frac{1}{2}W(0)$, which occurs when $\beta = 0$. 
	If $W(0) > W(s)$, then $E(s)$ takes the following form 
	\begin{align} \nonumber
		E(s) &= \frac{1}{2}W(0) + \theta(s) \big( W(s) - W(0) \big), \\
		\theta(s) &:= \sup \beta, \quad \textrm{ subject to }\quad 
		(1-2\beta) \delta(x) + \beta \delta(x - s) + \beta \delta(x + s) 
		\in \mathcal{C}. 
	\end{align}	
	The function $\theta(s)$ can be computed by examining the 
	convex cone constraint $F(x) \in \mathcal{C}$:
\[ 
	\langle F, \cos(2\pi k x) \rangle = 1-2\beta + 2 \beta \cos(2\pi k s) 
	\geq 0, \quad \textrm{ for all } k \in \mathbbm{Z} \setminus 0. 
\]
Hence,
\[ 
0 \leq \beta \leq \frac{1}{2(1- \cos(2\pi k s))} \quad \textrm{ for all } 
k \in \mathbbm{Z} \setminus 0. 
\]
% --------------------------------------------------------------------------- 80
It follows that, 
\begin{align} \nonumber
	\theta(s) &= \inf_{k \in \mathbbm{Z} \setminus 0} \frac{1}{2(1- \cos(2\pi k s))}.
\end{align}
When the value of $s$ is irrational (denote by $\overline{\mathbbm{Q}}$), 
$\cos(2\pi k s)$ can be made arbitrarily 
close to $-1$:
\[ 
	\theta(s) = \frac{1}{4} \text{  for } s \in \overline{\mathbbm{Q}}, 
	\quad \quad \theta(s) = \min_{0 \leq k \leq p}\frac{1}{2(1-\cos(2\pi k s))} 
	\textrm{  for } s = \frac{q}{p} \in \mathbbm{Q}.
\]
An immediate consequence is that $\theta(q/p) \geq \theta(s)$ for all 
$q/p \in \mathbbm{Q}$ (rational) and $s \in \overline{\mathbbm{Q}}$ 
(irrational).  The function $\theta(s)$ also has interesting continuity 
properties:
%
% --------------------------------------------------------------------------- 80
%
\begin{proposition} \label{Theta_Continuity}
	The function $\theta(s)$, for $0 < s < \frac{1}{2}$, is continuous at all
	$s \in \overline{\mathbbm{Q}} \cup \mathbb{Q}_e$, and discontinuous at 
	all $s \in \mathbbm{Q}_o$, where 
	$\mathbbm{Q} = \mathbbm{Q}_e \cup \mathbbm{Q}_o$,
	\[ 
		\mathbbm{Q}_e = \{ q/p \in \mathbbm{Q} : \textrm{gcd}(q, p) = 1, p 
		\textrm{ even}\}, \quad\quad \mathbbm{Q}_o = \{ q/p \in \mathbbm{Q} 
		: \textrm{gcd}(q, p) = 1, p \textrm{ odd}\}.
	\]
\end{proposition}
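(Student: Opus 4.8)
The plan is to reduce the entire statement to the continuity behavior of the auxiliary quantity
\[
	m(s) := \inf_{k \in \mathbbm{Z}\setminus 0} \cos(2\pi k s),
\]
since the derivation above gives $\theta(s) = \frac{1}{2(1-m(s))} = \phi(m(s))$, where $\phi(t) := \frac{1}{2(1-t)}$ is continuous and strictly increasing on $[-1,1)$. For $0 < s < \tfrac12$ one has $-1 \le m(s) \le \cos(2\pi s) < 1$ (taking $k=1$), so $m$ takes values in $[-1,1)$ where $\phi$ is a homeomorphism onto its image. Consequently $\theta$ is continuous at a point $s_0$ if and only if $m$ is continuous at $s_0$, and the proposition becomes a statement about the continuity points of $m$.

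First I would record the values of $m$. For irrational $s$, Kronecker's theorem shows $\{ks \bmod 1\}_{k\ge 1}$ is dense in $[0,1)$, so $\cos(2\pi k s)$ comes arbitrarily close to $-1$ and $m(s) = -1$. For $s = q/p$ with $\gcd(q,p)=1$, the residues $kq \bmod p$ sweep out all of $\{0,1,\dots,p-1\}$, so $m(s) = \min_{0\le j \le p-1}\cos(2\pi j/p)$; this minimum equals $-1$ exactly when $p$ is even (attained at $j = p/2$), and equals $-\cos(\pi/p) > -1$ when $p$ is odd (attained at the two residues $j=(p\pm1)/2$ straddling $p/2$). Hence $m(s)=-1$, equivalently $\theta(s)=\tfrac14$, precisely on $\overline{\mathbbm{Q}}\cup\mathbbm{Q}_e$, while $\theta(q/p) = \frac{1}{2(1+\cos(\pi/p))} > \tfrac14$ on $\mathbbm{Q}_o$.

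For the continuity half I would use that $m$ is a pointwise infimum of the continuous functions $s\mapsto\cos(2\pi k s)$ and is therefore upper semicontinuous everywhere, so $\limsup_{s\to s_0} m(s) \le m(s_0)$. At any $s_0 \in \overline{\mathbbm{Q}}\cup\mathbbm{Q}_e$ we have $m(s_0) = -1$, while the universal lower bound $m(s)\ge -1$ forces $\liminf_{s\to s_0} m(s) \ge -1$; the two bounds pinch $m$ to continuity at $s_0$, and hence $\theta$ is continuous there. For the discontinuity half, fix $s_0 = q/p \in \mathbbm{Q}_o$, so $\theta(s_0) > \tfrac14$ by the computation above. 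Because the irrationals are dense, I choose a sequence $s_n \to s_0$ of irrationals; then $\theta(s_n) = \tfrac14$ for every $n$, so $\lim_n \theta(s_n) = \tfrac14 \ne \theta(s_0)$, and $\theta$ cannot be continuous at $s_0$.

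The routine-but-delicate step, and the one I expect to carry the real content, is the explicit evaluation $m(q/p) = \min_{0\le j<p}\cos(2\pi j/p)$ together with its even/odd dichotomy: whether the residues hit the antipode $j=p/2$ of the cosine (value $-1$) or only straddle it via $(p\pm1)/2$ (value $-\cos(\pi/p)$). Once this and the Kronecker density input for irrationals are in hand, the continuity and discontinuity conclusions follow cleanly from upper semicontinuity of $m$ combined with the uniform lower bound $m\ge -1$ and the density of the level set $\{m=-1\}$.
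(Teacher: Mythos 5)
Your proof is correct, and while its skeleton matches the paper's (evaluate the inner function on rationals via the complete residue system $kq \bmod p$, use density of $\{ks \bmod 1\}$ for irrationals, and exhibit an irrational sequence to break continuity on $\mathbbm{Q}_o$), the continuity half runs on a genuinely different engine. The paper's proof introduces $\tilde{\theta}(s)$ piecewise (set to $-1$ on irrationals by fiat) and proves continuity at $s_0 \in \overline{\mathbbm{Q}} \cup \mathbbm{Q}_e$ by an explicit $\epsilon$--$\delta$ count: only finitely many rationals with denominator $p \leq \lceil \epsilon^{-1} \rceil$ lie near $s_0$, so nearby points of $\mathbbm{Q}_o$ have large $p$, and the Lipschitz bound $|\cos(\pi + \pi/p) - \cos(\pi)| \leq \pi/p$ closes the estimate. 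You instead define $m(s) := \inf_{k \neq 0} \cos(2\pi k s)$ intrinsically, note it is upper semicontinuous as a pointwise infimum of continuous functions, and pinch against the universal floor $m \geq -1$ at any point of the level set $\{m = -1\} = \overline{\mathbbm{Q}} \cup \mathbbm{Q}_e$; it is precisely your intrinsic definition (rather than the paper's ad hoc one) that makes the semicontinuity argument available. Your route is shorter, needs no arithmetic counting, and generalizes to any infimum of continuous functions at points attaining the global lower bound. What the paper's route buys is quantitative information: the $\pi/p$ modulus tied to the denominator is exactly the estimate whose reversal (the lower bound $|\tilde{\theta}(q/p) - \tilde{\theta}(s^*)| \geq c_1/p^2$, combined with continued-fraction approximations) drives Proposition \ref{Prop_RationalPoints}, so the paper's computation is deliberately reused downstream, whereas your qualitative argument would need to be supplemented there. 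One cosmetic caution: your reduction asserts continuity of $\theta$ at $s_0$ \emph{iff} continuity of $m$; only the forward implication (via continuity of $\phi(t) = \tfrac{1}{2(1-t)}$ on $[-1,1)$) is actually used, and you correctly verify $m(s) \leq \cos(2\pi s) < 1$ so that the composition is well behaved on $0 < s < \tfrac{1}{2}$.
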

%
% --------------------------------------------------------------------------- 80
%
\begin{proof}
	For simplicity in the proof first introduce
	\[ 
	\tilde{\theta}(s) := -1 \textrm{ for } s \in \overline{\mathbbm{Q}}, 
	\quad \tilde{\theta}(s) = \min_{0 \leq k \leq p} \cos(2\pi k s) 
	\textrm{ for } s := \frac{q}{p} \in \mathbbm{Q}. 
	\]
	Since $\theta(s)$ is a composition of a continuous function with 
	$\tilde{\theta}(s)$, it is sufficient to prove Proposition~\ref{Theta_Continuity} 
	for the modified function $\tilde{\theta}(s)$ 
	instead of $\theta(s)$.
	
% --------------------------------------------------------------------------- 80		
	First we remark on the value of $\tilde{\theta}(q/p)$ for integers $q, p$ 
	with $\textrm{gcd}(q,p) =1$: there exists an integer $k^* > 0$ such that
	\begin{align} \nonumber
		k^* q &\equiv \frac{p}{2}  \;(\textrm{mod } p) \textrm{ if } p 
		\textrm{ is even}, \\ \nonumber
		k^* q &\equiv \frac{p+1}{2} \;(\textrm{mod } p) \textrm{ if } p 
		\textrm{ is odd}. 
	\end{align}
% --------------------------------------------------------------------------- 80	
	Hence the optimal value of $\tilde{\theta}(s)$ is given by
	\begin{align}\nonumber
		\tilde{\theta}(q/p) &= \cos\Big(\frac{2\pi k^* q}{p} \Big) 
		= \cos(\pi) = -1,  \hspace{26mm} \textrm{ if } p \textrm{ is even}, \\ \nonumber
		\tilde{\theta}(q/p) &= \cos\Big(\frac{2\pi k^* q}{p}\Big) 
		= \cos\Big(\pi + \frac{\pi}{p}\Big) = -\cos\Big(\frac{\pi}{p}\Big), 
		\quad \textrm{ if } p \textrm{ is odd}.
	\end{align}	
% --------------------------------------------------------------------------- 80	
	If $s_0 \in \mathbbm{Q}_o$, then any sequence $s_j\rightarrow s_0$ with 
	$s_j \in \overline{\mathbbm{Q}}$ has 
	\[ 
	\tilde{\theta}(s_0) - \tilde{\theta}(s_j) = \tilde{\theta}(s_0)+1 
	> \delta > 0, \textrm{ for some } \delta.
	\]  
	Hence $\tilde{\theta}(s)$ is discontinuous at $\mathbbm{Q}_o$. 

% --------------------------------------------------------------------------- 80	
	For continuity at a point $s_0 \in \overline{\mathbbm{Q}}\cup\mathbb{Q}_e$, 
	let $\epsilon > 0$. Clearly for any 
	$s \in \overline{\mathbbm{Q}}\cup\mathbbm{Q}_e$, 
	\[ 
	|\tilde{\theta}(s) - \tilde{\theta}(s_0)| = 0 < \epsilon.
	\]
	To examine the behavior of $s \in \mathbbm{Q}_o$, fix 
	$t = \lceil \epsilon^{-1}\rceil$ as the smallest integer larger than 
	$\epsilon^{-1}$. Note that there are only a finite number of rational 
	numbers $q/p$ with $p \leq t$ (and greatest common divisor 
	$\textrm{gcd}(q,p) = 1$) in the interval $s_0 - 1 < q/p < s_0 + 1$.  
	Hence, for any $\epsilon > 0$, one may take $\delta = \delta(\epsilon)$ small 
	enough so that rational value $q/p$ satisfying $|s_0 - q/p| < \delta$,
	must have $p > t$.
	Consequently for any rational $q/p \in \mathbbm{Q}_o$:
	\begin{align} \nonumber
		&\Longrightarrow \Big|\tilde{\theta}(q/p) - \tilde{\theta}(s_0) \Big| 
		= \Big|\cos\Big(\pi + \frac{\pi}{p}\Big) - \cos(\pi)\Big|\leq \frac{\pi}{p} 
		\leq \frac{\pi}{t} \leq \pi \epsilon.
	\end{align}
	In the last line we have used a Lipschitz constant of $1$ for cosine. 
	This concludes the proof. 	 		
\end{proof}
% --------------------------------------------------------------------------- 80

Proposition~\ref{Theta_Continuity} now leads to the following result:
if $W(s)$ is smooth enough, $s^*$ must be rational.
\begin{proposition} \label{Prop_RationalPoints}
	Suppose $W(s)$ satisfies (W1)--(W4) and has bounded second derivative on 
	$(0,1)$. Assume also that $W(0) > \min_{0 < x < 1} W(x)$ is not strict minimum
	value of $W(x)$.
	Fix $0 < s^* < \frac{1}{2}$ with 
	$s^* \in \overline{\mathbbm{Q}} \cup \mathbbm{Q}_e$. 
	Then $s^*$ does not minimize $E(s)$. 
\end{proposition}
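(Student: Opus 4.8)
The plan is to show that $s^*$ is always beaten by a suitable odd-denominator rational $s'=q/p$, at which $\theta$ is strictly larger than at $s^*$. Recall that for $s^*\in\overline{\mathbbm{Q}}\cup\mathbbm{Q}_e$ one has $\theta(s^*)=\tfrac14$, whereas for $s'=q/p\in\mathbbm{Q}_o$ the computation behind Proposition~\ref{Theta_Continuity} (namely $\tilde\theta(q/p)=-\cos(\pi/p)$) gives $\theta(q/p)=\tfrac14\sec^2(\pi/(2p))>\tfrac14$. I may assume $W(s^*)<W(0)$, since otherwise $E(s^*)=\tfrac12W(0)$, while by the hypothesis $W(0)>\min W$ there is a point with $W<W(0)$ and hence strictly smaller energy, so $s^*$ already fails to minimize. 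In this regime $E(s^*)=\tfrac12W(0)+\tfrac14\big(W(s^*)-W(0)\big)$, and for any odd rational $s'$ with $W(s')<W(0)$ the difference factors as
\begin{align}\nonumber
 E(s')-E(s^*)=\underbrace{\Big(\theta(s')-\tfrac14\Big)\big(W(s')-W(0)\big)}_{<\,0}+\tfrac14\big(W(s')-W(s^*)\big).
\end{align}
The first term is strictly negative because $\theta(s')>\tfrac14$ and $W(s')-W(0)<0$; the entire strategy is to choose $s'$ so that the second term cannot cancel it.

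If $s^*$ is not a strict isolated global minimizer of $W$ on the domain, this is immediate. Either there is a strictly lower point $x_0$ with $W(x_0)<W(s^*)$, in which case continuity of $W$ and density of $\mathbbm{Q}_o$ furnish an odd rational $s'$ near $x_0$ with $W(s')<W(s^*)$; or $W$ is not strictly minimized at $s^*$, in which case such an $s'$ can be taken near $s^*$. In every such case the second term is $\le 0$ while the first is strictly negative, so $E(s')<E(s^*)$ and $s^*$ does not minimize $E$.

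The remaining, delicate case is when $s^*$ is a strict interior global minimizer of $W$. Here bounded second derivative gives $0\le W(s')-W(s^*)\le\tfrac{C}{2}|s'-s^*|^2$ with $C=\sup|W''|$, while the gain obeys $\theta(s')-\tfrac14=\tfrac14\tan^2(\pi/(2p))\ge\tfrac{\pi^2}{16p^2}$ and $W(s')-W(0)\to W(s^*)-W(0)<0$. Thus the first term is of order $-g/p^2$ with $g=W(0)-W(s^*)>0$, and it dominates the loss precisely when $|s'-s^*|^2\ll 1/p^2$. For irrational $s^*$ I would supply such approximants from the continued-fraction convergents $p_n/q_n$, which satisfy $|s^*-p_n/q_n|<1/q_n^2$; since consecutive denominators are coprime they cannot both be even, so infinitely many $q_n$ are odd, and for those $p=q_n$ the loss is $O(1/p^4)$ against a gain of order $1/p^2$, yielding $E(p_n/q_n)<E(s^*)$ for $n$ large.

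I expect this last trade-off to be the main obstacle. The jump of $\theta$ at an odd denominator $p$ decays like $1/p^2$, so the proof needs odd-denominator rationals approaching $s^*$ strictly faster than $1/p$, and the coprimality of consecutive convergents is exactly what delivers this in the irrational case. The genuinely subtle subcase is an even-denominator rational strict minimizer $s^*=a/b$, whose closest odd-denominator approximants lie only at distance $\Theta(1/p)$, making loss and gain the same order $1/p^2$; there one must compare constants, and I anticipate the estimate to require that the depth $W(0)-W(s^*)$ dominate the curvature bound $C$ (or an equivalent non-degeneracy). This is where the remaining work, and the precise role of the regularity hypotheses on $W$, must be concentrated.
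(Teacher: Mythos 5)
Your completed cases track the paper's proof almost exactly. The paper uses the same two ingredients: the identity $E(s_j)-E(s^*)=\theta(s_j)\big(W(s_j)-W(s^*)\big)+\big(\theta(s_j)-\theta(s^*)\big)\big(W(s^*)-W(0)\big)$ (algebraically your splitting, with $\theta(s^*)=\tfrac14$), and continued-fraction convergents with $|s^*-q_j/p_j|\le c_2/p_j^2$ and odd denominators infinitely often; your one-line parity argument (consecutive convergent denominators are coprime, hence not both even) is an equivalent replacement for the paper's inductive footnote, and your formula $\theta(q/p)=\tfrac14\sec^2\big(\pi/(2p)\big)$ is correct. Where you differ is the preliminary case analysis: the paper first shows that any minimizer $s^*\in\overline{\mathbbm{Q}}\cup\mathbbm{Q}_e$ of $E$ must minimize $W$ (since $E(s)=\tfrac14\big(W(s)+W(0)\big)$ on the dense set where $\theta=\tfrac14$), after which only $W'(s^*)=0$ and the bounded-$W''$ Taylor estimate are used; this sidesteps your separate treatment of non-strict minimizers, which as written is itself slightly gappy --- if the minimum set of $W$ is a finite set containing no odd rationals, an odd rational $s'$ with $W(s')\le W(s^*)$ need not exist near $s^*$, so your ``immediate'' subcase silently re-enters the quantitative trade-off.

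The subtle case you flagged --- $s^*=a/b\in\mathbbm{Q}_e$ --- is a genuine gap, and you should know the paper's own proof does not close it either: the appeal to ``a well-known theorem from continued fractions'' presupposes an infinite expansion, i.e.\ $s^*\in\overline{\mathbbm{Q}}$. For $s^*=a/b$ with $b$ even and any odd $p$ one has $q/p\neq a/b$ by parity, hence $|q/p-a/b|\ge 1/(pb)$, so $\theta(q/p)-\tfrac14\le \tfrac{\pi^2}{4p^2}\le \pi^2 b^2\,|q/p-s^*|^2$: the jump is \emph{quadratic}, not linear, in the distance, the paper's key inequality $|\theta(s_j)-\theta(s^*)|\ge c_3|s_j-s^*|$ fails along every sequence, and gain and loss are both $\Theta(1/p^2)$, exactly as you diagnosed. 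Moreover your anticipation that a depth-versus-curvature condition then becomes unavoidable is correct, and in fact the $\mathbbm{Q}_e$ half of the statement fails without one: take $W$ to be a smooth, mirror-symmetric well of depth $A$ and width $\epsilon$ centered at $s^*=\tfrac12$, constant elsewhere and shifted to mean zero, with $W(s)-W(\tfrac12)\ge A\epsilon^{-2}(s-\tfrac12)^2$ on the well. Then off the well $E(s)-E(\tfrac12)=\tfrac{A}{4}>0$; on the well, for odd rationals the gain $(\theta-\tfrac14)\big(W(0)-W(s)\big)\le \pi^2 A(s-\tfrac12)^2$ is dominated by the loss $\theta(s)\big(W(s)-W(\tfrac12)\big)\ge \tfrac{A}{4\epsilon^2}(s-\tfrac12)^2$ once $\epsilon<\tfrac{1}{2\pi}$, while at irrational and even-rational $s$ the difference is $\tfrac14\big(W(s)-W(\tfrac12)\big)>0$. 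So $\tfrac12\in\mathbbm{Q}_e$ minimizes $E$ under all the stated hypotheses. In short: your proof is incomplete where you said it was, but the obstruction you isolated is real --- the paper's argument proves the proposition only for $s^*\in\overline{\mathbbm{Q}}$, and the $\mathbbm{Q}_e$ case needs an additional hypothesis of roughly the form you anticipated.
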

% --------------------------------------------------------------------------- 80
%
\begin{proof}
	We assume that $s^* \in \overline{\mathbbm{Q}}\cup \mathbbm{Q}_e$ 
	minimizes $E(s)$, and then arrive at a contradiction.
	First observe that if $s^*\in \overline{\mathbbm{Q}}\cup \mathbbm{Q}_e$ 
	and minimizes $E(s)$, then $s^*$ must also minimize $W(s)$. This is
	because 
	$E(s^*) = \frac{1}{4}( W(s^*) + W(0) )$ whenever 
	$s^* \in \overline{\mathbbm{Q}}\cup \mathbbm{Q}_e$. Hence, by continuity
	of $W(s)$, $s^*$ must minimize $W(s)$ and therefore
	$W'(s^*) = 0$. Using Taylor's remainder theorem, 
	there exists a constant $C$ such that for any $s$ in the 
	neighborhood of $s^*$,
	\[ 
	|W(s) - W(s^*) | \leq C |s - s^*|^2. 
	\]
	We now argue that one can find a rational point close to $s^*$ that has a
	lower value of $E(s)$ than $E(s^*)$.  Using basic properties of cosine, as well as the result 
	from Proposition \ref{Theta_Continuity}, one has for any rational point 
	$q/p \in \mathbbm{Q}_o$ in the neighborhood of $s^*$, there exists a 
	$c_1 > 0$ such that
	\[ 
	\Big|\tilde{\theta}(q/p) - \tilde{\theta}(s^*)\Big| 
	= \Big|\cos\Big(\frac{\pi}{p} + \pi\Big)- \cos(\pi)\Big| \geq \frac{c_1}{p^2}.
	\]
% --------------------------------------------------------------------------- 80
	Now consider a sequence of approximating rational points 
	$s_j = q_j/p_j \rightarrow s^*$, for $j > 0$ with $s_j \in \mathbbm{Q}$ 
	that by a well-known theorem from continued fractions 
	\cite{JonesThron1980} satisfy
	\[ 
	\Big| \frac{q_j}{p_j} - s^* \Big| \leq \frac{c_2}{p_j^2}.
	\]
	An important remark, is that the sequence of $p_j$ generated via continued 
	fractions have $p_j$ odd infinitely often.  Therefore, without loss of 
	generality we may restrict the sequence $s_j$ to a sub-sequence on  
	$\mathbbm{Q}_o$ that has $p_j$ odd\footnote{An infinite continued 
	fraction can be represented as a unique sequence of positive integers 
	$(a_0, a_1, a_2, \ldots)$.  The rational $s_j = q_j/p_j$ approximations 
	satisfy the recursion relations $q_0 = a_0$, $p_0 = 1$, 
	$q_1 = a_1 a_0 + 1$, $p_1 = a_1$, and 
	$q_j = a_j q_{j-1}+q_{j-2}, p_j = a_j p_{j-1} + p_{j-2}$, for 
	$j \geq 2$. Therefore using an induction argument, one can show that if 
	$(p_{j-1}, p_{j})$ has at least one odd term, then $(p_{j+1}, p_{j+2})$ 
	also has one odd term. Since $(p_0, p_1) = (1, a_1)$, the result follows.}.
	
	Hence, combining the previous two inequalities, on this sequence 
	$s_j \in \mathbbm{Q}_o$
	\[ 
	\big| \theta(s_j) - \theta(s^*)\big| \geq c_3\big|s_j -s^* \big|.
	\]	
% --------------------------------------------------------------------------- 80
	By direct calculation, for $j$ sufficiently large, 
	\begin{align}
		E(s_j) - E(s^*) &= \theta(s_j) \Big( W(s_j) - W(s^*)\Big) 
		+ \Big( \theta(s_j) - \theta(s^*) \Big) \Big( W(s^*) - W(0) \Big), \\
		&\leq A_1 |s_j - s^*|^{2} - A_2 |s_j - s^*|,
	\end{align}
	where $A_1 > 0$ is an upper bound on $\theta(s_j)$ and the Taylor 
	constant, $A_2 = c_3 (W(0) - W(s^*)) > 0$. Finally, for sufficiently 
	large $j$ one has $A_2 |s_j - s^*| > A_1 |s_j-s^*|^{2}$ implying
	\[ 
		E(s_j) < E(s^*).
	\]
	Thus, $s^*$ cannot minimize $E(s)$.
\end{proof}
The purpose of Proposition \ref{Prop_RationalPoints} is to observe 
that if $W(s)$ is smooth enough on $(0,1)$, then minimizes of the form
(\ref{Subclass_3Delta}) must have rational spacings.

\bibliographystyle{siam}
\bibliography{bibliography}

\begin{thebibliography}{10}

\bibitem{AhmedRechtRomberg2013}
{\sc A.~Ahmed, B.~Recht, and J.~Romberg}, {\em Blind deconvolution using convex
  programming}, IEEE Transactions on Information Theory, 60 (2013),
  pp.~1711--1732.

\bibitem{AmesVavasis2014}
{\sc B.~Ames and S.~Vavasis}, {\em Convex optimization for the planted
  k-disjoint-clique problem}, Mathematical Programming, 143 (2014),
  pp.~299--337.

\bibitem{BalagueCarrilloLaurentRaoul2013a}
{\sc D.~Balagu\'{e}, J.~Carrillo, T.~Laurent, and G.~Raoul}, {\em
  Dimensionality of local minimizers of the interaction energy}, Arch. Rational
  Mech. Anal., 209 (2013), pp.~1055--1088.

\bibitem{BalagueCarrilloYao2014}
{\sc D.~Balagu\'{e}, J.~Carrillo, and Y.~Yao}, {\em Confinement for
  repulsive-attractive kernels}, Discrete and Continuous Dynamical Systems-B,
  19 (2014), pp.~1227--1248.

\bibitem{BernoffTopaz2013}
{\sc A.~Bernoff and C.~Topaz}, {\em Nonlocal aggregation models: A primer of
  swarm equilibria}, SIAM Review, 55 (2013), pp.~709--747.

\bibitem{BoydVandenberghe2004}
{\sc S.~Boyd and L.~Vandenberghe}, {\em Convex Optimization}, Cambridge
  University Press, 2004.

\bibitem{BurachikJeyakumar2004}
{\sc R.~Burachik and V.~Jeyakumar}, {\em A simple closure condition for the
  normal cone intersection formula}, Proceedings of the American Mathematical
  Society, 133 (2005), pp.~1741--1748.

\bibitem{CanizoCarrilloPatacchini2015}
{\sc J.~A. Ca\H{n}izo, J.~A. Carrillo, and F.~S. Patacchini}, {\em Existence of
  compactly supported global minimisers for the interaction energy}, Arch. Rat.
  Mech. Anal., 217 (2015), pp.~1197--1217.

\bibitem{CandesLi2012}
{\sc E.~Cand\`{e}s and X.~Li}, {\em Solving quadratic equations via phaselift
  when there are about as many equations as unknowns}, Foundations of
  Computational Mathematics, 14 (2014), pp.~1017--1026.

\bibitem{CandesLiSoltanolkotabi2014}
{\sc E.~Cand\`{e}s, X.~Li, and M.~Soltanolkotabi}, {\em Phase retrieval via
  {W}irtinger flow: Theory and algorithms}, IEEE Transactions on Information
  Theory, 61 (2015), pp.~1985--2007.

\bibitem{CandesTao2010}
{\sc E.~Cand\`{e}s and T.~Tao}, {\em The power of convex relaxation:
  Near-optimal matrix completion}, IEEE Transactions on Information Theory, 56
  (2010), pp.~2053--2080.

\bibitem{CarrilloFigalliPatacchini2016}
{\sc J.~Carrillo, A.~Figalli, and F.~Patacchini}, {\em Geometry of minimizers
  for the interaction energy with mildly repulsive potentials}, Annales de
  l'Institut Henri Poincare (C) Non Linear Analysis,  (2016).
\newblock doi 10.1016/j.anihpc.2016.10.004.

\bibitem{CarrilloChipotHuang2014}
{\sc J.~A. Carrillo, M.~Chipot, and Y.~Huang}, {\em On global minimizers of
  repulsive-attractive power-law interaction energies}, Phil. Trans. R. Soc. A,
  372 (2014).
\newblock http://dx.doi.org/10.1098/rsta.2013.0399.

\bibitem{CarrilloSlepcevWu2016}
{\sc J.~A. Carrillo, D.~Slep\u{c}ev, and L.~Wu}, {\em Nonlocal-interaction
  equations on uniformly prox-regular sets}, Discrete and Continuous Dynamical
  Systems-A, 36 (2016), pp.~1209--1247.

\bibitem{ChoksiFetecauTopaloglu2013}
{\sc R.~Choksi, R.~Fetecau, and I.~Topaloglu}, {\em On minimizers of
  interaction functionals with competing attractive and repulsive potentials},
  Annales de l'institute Henri Poincar\'{e} (C) Non linear analysis, 32 (2015),
  pp.~1283--1305.

\bibitem{CohnKumarSchurmann2010}
{\sc H.~Cohn, A.~Kumar, and A.~Schurmann}, {\em Ground states and formal
  duality relations in the {G}aussian core model}, Physical Review E, 80
  (2009), p.~061116.

\bibitem{Cotter1977}
{\sc M.~A. Cotter}, {\em Hard spherocylinders in an anisotropic mean field: A
  simple model for a nematic liquid crystal}, J. Chem. Phys., 66 (1977),
  pp.~1098--1106.

\bibitem{CraigTopaloglu2015}
{\sc K.~Craig and I.~Topaloglu}, {\em Convergence of regularized nonlocal
  interaction energies}, SIAM J. Math. Anal., 48 (2015), pp.~34--60.

\bibitem{DemanetHand2014}
{\sc L.~Demanet and P.~Hand}, {\em Scaling law for recovering the sparsest
  element in a subspace}, Information and Inference, 3 (2014), pp.~295--309.

\bibitem{ELi2009}
{\sc W.~E and D.~Li}, {\em On the crystallization of 2d hexagonal lattices},
  Commun. Math. Phys., 286 (2009), pp.~1099--1140.

\bibitem{EmelianenkoLiuDu2006}
{\sc M.~Emelianenko, Z.-K. Liu, and Q.~Du}, {\em A new algorithm for the
  automation of phase diagram calculation}, Comp. Mater. Sci., 35 (2006),
  pp.~61--74.

\bibitem{FarmerEsedogluSmereka2016}
{\sc B.~Farmer, S.~Esedo\={g}lu, and P.~Smereka}, {\em Crystallization for a
  {B}renner-like potential}, Commun. Math. Phys.,  (2016).
\newblock Electronically published on August 10, 2016
  doi:10.1007/s00220-016-2732-6 (to appear in print).

\bibitem{Floudas1999}
{\sc C.~Floudas}, {\em Deterministic global optimization, theory, methods and
  applications}, Kluwer Academic Publishers, 1999.

\bibitem{Folland1999}
{\sc G.~B. Folland}, {\em Real Analysis: Modern Techniques and Their
  Applications}, John Wiley \& Sons, second~ed., 1999.

\bibitem{GoemansWilliamson1995}
{\sc M.~Goemans and D.~Williamson}, {\em Improved approximation algorithms for
  maximum cut and satisfiability problems using semidefinite programming},
  Journal of the Association for Computing Machinery, 42 (1995),
  pp.~1115--1145.

\bibitem{HarveyErikssonOrbanChartrand2013}
{\sc J.-P. Harvey, G.~Eriksson, D.~Orban, and P.~Chartrand}, {\em Global
  minimization of the {G}ibbs energy of multicomponent systems involving the
  presence of order/disorder phase transitions}, American Journal of Science,
  313 (2013), pp.~199--241.

\bibitem{HeitmannRadin1980}
{\sc R.~C. Heitmann and C.~Radin}, {\em The ground state for sticky disks}, J.
  Stat. Phys., 22 (1980), pp.~281--287.

\bibitem{HiriartUrrutySeeger2010}
{\sc J.-B. Hiriart-Urruty and A.~Seeger}, {\em A variational approach to
  copositive matrices}, SIAM Review, 52 (2010), pp.~593--629.

\bibitem{HolmesGortlerBrenner2013}
{\sc M.~Holmes-Cerfon, S.~J. Gortler, and M.~P. Brenner}, {\em A geometrical
  approach to computing energy landscapes from short-ranged potentials}, Proc.
  Natl. Acad. Sci., 110 (2013), pp.~E5--E14.

\bibitem{JaganathanOymakHassibi2012}
{\sc K.~Jaganathan, S.~Oymak, and B.~Hassibi}, {\em Recovery of sparse 1d
  signals from the magnitudes of their {F}ourier transform}, 2012 IEEE
  International Symposium on Information Theory Proceedings (ISIT),  (2012),
  pp.~1473--1477.

\bibitem{JonesThron1980}
{\sc W.~Jones and W.~J. Thron}, {\em Continued fractions. {A}nalytic theory and
  applications}, Encyclopedia of Mathematics and its Applications 11, Reading,
  MA: Addison-Wesley, 1980.

\bibitem{LiuLaiOsher2015}
{\sc R.~Lai, J.~Liu, and S.~Osher}, {\em Density matrix minimization with
  $\ell_1$ regularization}, Commun. Math. Sci., 13 (2015), pp.~2097--2117.

\bibitem{LeverentzTopazBernoff2009}
{\sc A.~Leverentz, C.~Topaz, and A.~Bernoff}, {\em Asymptotic dynamics of
  attractive-repulsive swarms}, SIAM J. Appl. Dyn. Syst., 8 (2009),
  pp.~880--908.

\bibitem{LocatelliSchoen2003}
{\sc M.~Locatelli and F.~Schoen}, {\em Efficient algorithms for large scale
  global optimization: {L}ennard-{J}ones clusters}, Computat. Optim. and App.
  archive, 26 (2003), pp.~173--190.

\bibitem{LocatelliSchoen2013}
\leavevmode\vrule height 2pt depth -1.6pt width 23pt, {\em Global optimization:
  Theory, algorithms and applications}, SIAM, 2013.

\bibitem{LuOtto2015}
{\sc J.~Lu and F.~Otto}, {\em An isoperimetric problem with {C}oulomb repulsion
  and attraction to a background nucleus}, 2015.
\newblock Preprint arXiv: 1508.07172.

\bibitem{MaranasFloudas1994}
{\sc C.~D. Maranas and C.~A. Floudas}, {\em Global minimum potential energy
  conformations of small molecules}, J. Glob. Optim., 4 (1994), pp.~135--170.

\bibitem{MarcotteStillingerTorquato2011}
{\sc E.~Marcotte, F.~Stillingerb, and S.~Torquato}, {\em Optimized monotonic
  convex pair potentials stabilize low-coordinated crystals}, Soft Matter, 7
  (2011), pp.~2332--2335.

\bibitem{Massart2003}
{\sc P.~Massart}, {\em Concentration inequalitites and model selection},
  vol.~1896 of Lecture Notes in Mathematics, Springer, 2007.

\bibitem{MogilnerKeshetBentSpiros2003}
{\sc A.~Mogilner, L.~Edelstein-Keshet, L.~Bent, and A.~Spiros}, {\em Mutual
  interactions, potentials, and individual distance in a social aggregation},
  J. Math. Biol., 47 (2003), pp.~353--389.

\bibitem{MotschTadmor2014}
{\sc S.~Motsch and E.~Tadmor}, {\em Heterophilious dynamics enhances
  consensus}, SIAM Review, 56 (2014), pp.~577--621.

\bibitem{NesterovWolkowiczYe2000}
{\sc Y.~Nesterov, H.~Wolkowicz, and Y.~Ye}, {\em Semidefinite Programming
  Relaxations of Nonconvex Quadratic Optimization}, vol.~27, Springer, 2000,
  ch.~13, pp.~361--419.
\newblock Chapter in Handbook of Semidefinite Programming.

\bibitem{NouralishahiWuVandenberghe2008}
{\sc M.~Nouralishahi, C.~Wu, and L.~Vandenberghe}, {\em Model calibration for
  optical lithography via semidefinite programming}, Optimization and
  Engineering, 9 (2008), pp.~19--35.

\bibitem{Onsager1949}
{\sc Onsager}, {\em The effects of shape on the interaction of colloidal
  particles}, Ann. N.Y. Acad. Sci., 51 (1949), pp.~627--659.

\bibitem{Radin1981}
{\sc C.~Radin}, {\em The ground state for soft disks}, J. Stat. Phys., 26
  (1981), pp.~365--373.

\bibitem{RechtsmanStillingerTorquato2005}
{\sc M.~Rechtsman, F.~Stillinger, and S.~Torquato}, {\em Optimized interactions
  for targeted self-assembly: Application to a honeycomb lattice}, Phys. Rev.
  Lett., 95 (2005), p.~228301.

\bibitem{Rockafellar1970}
{\sc T.~Rockafellar}, {\em Convex analysis}, Vol. 28 of Princeton Math. Series,
  Princeton Univ. Press, 1970.

\bibitem{Rudin87}
{\sc W.~Rudin}, {\em Real and Complex Analysis}, McGraw-Hill International
  Editions, Mathematics Series, McGraw-Hill, Inc., Singapore, third~ed., 1987.

\bibitem{SchneiderHorowitzHughtoBerry2013}
{\sc A.~S. Schneider, C.~J. Horowitz, J.~Hughto, and D.~K. Berry}, {\em Nuclear
  pasta formation}, Phys. Rev. C, 88 (2013), p.~065807.

\bibitem{SchulzSnyder1992}
{\sc T.~Schulz and D.~Snyder}, {\em Image recovery from correlations}, J. Opt.
  Soc. Am. A, 9 (1992), pp.~1266--1272.

\bibitem{SchulzVoelz2005}
{\sc T.~Schulz and D.~Voelz}, {\em Signal recovery from autocorrelation and
  cross-correlation data}, J. Opt. Soc. Am. A, 22 (2005), pp.~616--624.

\bibitem{ShirokoffChoksiNave2014}
{\sc D.~Shirokoff, R.~Choksi, and J.-C. Nave}, {\em Sufficient conditions for
  global minimality of metastable states in a class of non-convex functionals:
  A simple approach via quadratic lower bounds}, Journal of Nonlinear Science,
  25 (2015), pp.~539--582.

\bibitem{SimioneSlepcevTopaloglu2015}
{\sc R.~Simione, D.~Slep\u{c}ev, and I.~Topaloglu}, {\em Existence of ground
  states of nonlocal-interaction energies}, J. Stat. Phys., 159 (2015),
  pp.~972--986.

\bibitem{Suto2005}
{\sc A.~S\"{u}t\H{o}}, {\em Crystalline ground states for classical particles},
  Phys. Rev. Lett., 95 (2005), p.~265501.

\bibitem{Suto2006}
\leavevmode\vrule height 2pt depth -1.6pt width 23pt, {\em From bcc to fcc:
  Interplay between oscillating long-range and repulsive short-range forces},
  Phys. Rev. B, 74 (2006), p.~104117.

\bibitem{Suto2011}
\leavevmode\vrule height 2pt depth -1.6pt width 23pt, {\em Ground state at high
  density}, Commun. Math. Phys., 305 (2011), pp.~657--710.

\bibitem{Theil2006}
{\sc F.~Theil}, {\em A proof of crystallization in two dimensions}, Commun.
  Math. Phys., 262 (2006), pp.~209--236.

\bibitem{Torquato2006}
{\sc S.~Torquato}, {\em Necessary conditions on realizable two-point
  correlation functions of random media}, Ind. Eng. Chem. Res., 45 (2006),
  pp.~6923--6928.

\bibitem{BrechtUminskyKolokolnikovBertozzi2012}
{\sc J.~von Brecht, D.~Uminsky, T.~Kolokolnikov, and A.~Bertozzi}, {\em
  Predicting pattern formation in particle interactions}, Math. Models Methods
  Appl. Sci., 22 (2012), p.~1140002.

\bibitem{WaldspurgerAspremontMallat2015}
{\sc I.~Waldspurger, A.~d'Aspremont, and S.~Mallat}, {\em Phase recovery,
  maxcut and complex semidefinite programming}, Mathematical Programming, 149
  (2015), pp.~47--81.

\bibitem{WangMillerLizierProkopenkoRossi2012}
{\sc X.~R. Wang, J.~M. Miller, J.~Lizier, M.~Prokopenko, and L.~Rossi}, {\em
  Quantifying and tracing information cascades in swarms}, PLoS ONE, 7 (2012),
  p.~e40084.

\end{thebibliography}

\end{document}